\newtheorem{theorem}{Theorem}[section]
\newtheorem{lemma}[theorem]{Lemma}
\newtheorem{prop}[theorem]{Proposition}
\newtheorem{coro}[theorem]{Corollary}
\newtheorem{definition}[theorem]{Definition}
\newtheorem{remark}[theorem]{Remark}
\def\PP{\mathbb{P}}\def\AA{\mathbb{A}}\def\RR{\mathbb{R}}
\def\CC{\mathbb{C}}\def\HH{\mathbb{H}}\def\OO{\mathbb{O}}
\def\ZZ{\mathbb{Z}}\def\QQ{\mathbb{Q}}
\def\cA{{\mathcal A}}\def\cZ{{\mathcal Z}}
\def\cE{{\mathcal E}}
\def\cG{{\mathcal G}}
\def\cJ{{\mathcal J}}
\def\cQ{{\mathcal Q}}
\def\cM{{\mathcal M}}
\def\cO{{\mathcal O}}
\def\cS{{\mathcal S}}
\def\cU{{\mathcal U}}
\def\cX{{\mathcal X}}
\def\ra{\rightarrow}\def\lra{\longrightarrow}
\def\fg{{\mathfrak g}}\def\fh{{\mathfrak h}}
\def\fso{{\mathfrak{so}}}\def\ft{{\mathfrak{t}}}\def\fsl{{\mathfrak{sl}}}\def\fsp{{\mathfrak sp}}
\def\fe{{\mathfrak e}}\def\ff{{\mathfrak f}}
\def\fk{{\mathfrak k}}\def\fg{{\mathfrak g}}
\def\fj{{\mathfrak j}}\def\fp{{\mathfrak p}}
\def\fS{{\mathfrak S}}
\def\fD{{\mathfrak D}}
\DeclareMathOperator{\Ho}{H}
\DeclareMathOperator{\Aut}{Aut}
\DeclareMathOperator{\SL}{SL}
\DeclareMathOperator{\GL}{GL}
\DeclareMathOperator{\PGL}{PGL}
\DeclareMathOperator{\PSL}{PSL}
\DeclareMathOperator{\SO}{SO}
\DeclareMathOperator{\PSO}{PSO}
\DeclareMathOperator{\Sp}{Sp}
\DeclareMathOperator{\Stab}{Stab}
\DeclareMathOperator{\id}{id}
\DeclareMathOperator{\ad}{ad}
\DeclareMathOperator{\Ad}{Ad}
\DeclareMathOperator{\Fl}{Fl}
\DeclareMathOperator{\OGr}{OGr}
\DeclareMathOperator{\IG}{IG}
\DeclareMathOperator{\OG}{OG}
\DeclareMathOperator{\OFl}{OFl}
\DeclareMathOperator{\rank}{rank}
\DeclareMathOperator{\diag}{diag}
\DeclareMathOperator{\Hilb}{Hilb}
\DeclareMathOperator{\Lie}{Lie}
\DeclareMathOperator{\Out}{Out}
\DeclareMathOperator{\pt}{pt}
\def\GG{\mathbb{G}}
\def\vlad#1{\textcolor{blue}{{\bf Vlad:} #1 {\bf }}}
\def\laurent#1{\textcolor{red}{{\bf *** Laurent:} #1 {\bf ***}}}
\author{Vladimiro Benedetti}
\author{Laurent Manivel}
\title[Automorphisms of hyperplane sections]{On the automorphisms of hyperplane sections \linebreak of generalized Grassmannians}
\begin{document}

\maketitle

\begin{abstract}
Given a smooth hyperplane section $H$ of a rational homogeneous space $G/P$ with Picard number one, we address the 
question whether it is always possible to lift an automorphism of $H$ to the Lie group $G$, or more precisely to 
$\Aut(G/P)$. Using linear spaces and quadrics in $H$, we show that the answer is positive up to a few well
understood exceptions related to Jordan algebras. When $G/P$ is an adjoint variety, we show how to describe 
$\Aut(H)$ completely, extending results obtained by Prokhorov and Zaidenberg when $G$ is the exceptional group $G_2$. 
\end{abstract}

\section{Introduction}
In complex projective geometry, rational homogeneous spaces and their linear sections are an important source of 
easily available Fano manifolds. Already in dimension three they play a major role in the Fano-Iskovskikh classification 
of prime Fano threefolds \cite{ip}. In genus seven to ten these are indeed linear sections of certain specific generalized
Grassmannians (rational homogeneous spaces of Picard number one). 

In the study of such families of linear sections, a natural problem is to understand the automorphism groups. Since we 
start with highly symmetric varieties, these groups will remain big in low codimension. 
Describing the generic automorphism group of the family can already be challenging \cite{dm}, but these groups can in fact vary in quite
intriguing, somewhat erratic ways. Interesting phenomena have been observed in connection with the existence 
problem for K\"ahler-Einstein metrics, or with rigidity questions  
\cite{fh,bfm}. For Mukai fourfolds of genus $10$, a complete treatment was recently given in \cite{pz1,pz2}, where it is 
proved that exactly four types of automorphism groups can occur for hyperplane sections of the adjoint variety 
of type $G_2$, a very interesting generalized Grassmannian of dimension five and index three. 

For these hyperplane sections, a nice argument due to Mukai \cite{mukai-K3Fano}, and based on the special properties of K3 surfaces, 
implies that any automorphism can be lifted to $G_2$, which essentially reduces the problem to a Lie theoretic 
question. In \cite{pz1}, the authors asked about the same question for a smooth hyperplane section of an
arbitrary generalized 
Grassmannian $G/P$: is it true that any element of the connected component of the automorphism group can be lifted to $G$? 
Actually one can also ask: when is it true  that any element  of the automorphism group can be lifted to $G$?
The first goal of this paper is to give a complete answer to both questions.

\begin{theorem} 
\label{thm_1.1}
Let $H$ be a smooth hyperplane section of a generalized Grassmannian $X=G/P$ with respect to its minimal 
$G$-equivariant embedding. Any automorphism of $H$ in $\Aut^0(H)$ can be extended to an automorphism of $X$, i.e. $\Aut^0(H)\subset \Aut(X)$. This is true for any automorphism of $H$ as well, except possibly when $X$ is already a hyperplane section of  another 
generalized Grassmannian.
\end{theorem}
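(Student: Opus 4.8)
The plan is to reduce the statement to a question of projective linear algebra, and then to reconstruct $X\subset\PP(V)$ from $H$ using the linear spaces and quadrics it contains.

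First I would show that every automorphism of $H$ is linear. Since $X=G/P$ has Picard number one with $\cO_X(1)$ generating $\mathrm{Pic}(X)$, and $H$ is an ample divisor, the Grothendieck--Lefschetz theorem gives $\mathrm{Pic}(H)=\ZZ\,\cO_H(1)$ as soon as $\dim H\geq 3$ (the finitely many lower-dimensional cases --- essentially $\PP^n$, quadrics, and a few small Grassmannians --- I would dispatch by hand). Consequently every $\phi\in\Aut(H)$ fixes the ample generator $\cO_H(1)$ and acts on $H^0(H,\cO_H(1))$. The exact sequence $0\to\cO_X\to\cO_X(1)\to\cO_H(1)\to 0$, together with $H^1(X,\cO_X)=0$, identifies $H^0(H,\cO_H(1))$ with $W^*$, where $\PP(W)\subset\PP(V)$ is the hyperplane cutting out $H$. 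Therefore $\Aut(H)$ embeds in $\PGL(W)$ as the stabiliser of $H\subset\PP(W)$, and the theorem becomes the assertion that every $g\in\PGL(W)$ preserving $H$ lifts to an element of $\PGL(V)$ preserving $X$; as $X$ is linearly normal in its minimal embedding, this lifted group is exactly $\Aut(X)$.

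Next I would reconstruct the pair $X\subset\PP(V)$ from the projective geometry of $H$. A generalized Grassmannian in its minimal embedding is covered by homogeneous families of projective linear subspaces, and a general member $\Lambda\cong\PP^a$ of such a family meets $\PP(W)$ in a $\PP^{a-1}\subset H$. These restricted linear spaces, and the lines (minimal rational curves) of $H$, are intrinsic: any automorphism of $H$ preserves the lines through each point and hence permutes the families of maximal linear subspaces. The key step is to prove that $X\setminus H$, together with its embedding, is determined by the incidence data of these linear spaces on $H$ --- concretely, a point $x\in X\setminus H$ is recovered from the collection of traces $\Lambda\cap\PP(W)$ of the linear spaces $\Lambda\ni x$. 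Granting this, a linear $\phi$ preserving $H$ permutes the traces compatibly, extends to a bijection of $X$ that is linear on $\PP(W)$, and by linear normality is induced by an element of $\PGL(V)$, so $\phi\in\Aut(X)$. This already yields $\Aut^0(H)\subset\Aut(X)$, since a connected group acts trivially on the discrete set of families and cannot interchange them.

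For the full group $\Aut(H)$ the surviving linear spaces may be too small after cutting, and I would instead exploit the quadrics in $H$: because $X$ is cut out by quadrics (Kostant), the space $I_2(H)\subset\mathrm{Sym}^2 W^*$ of quadrics through $H$ is controlled by $I_2(X)\subset\mathrm{Sym}^2 V^*$, and comparing these two $g$-modules recovers the equations of $X$. The reconstruction fails to be rigid exactly when $X$ is itself a smooth hyperplane section of a larger generalized Grassmannian $X'$: then $H$ is a codimension-two linear section of $X'$, and its stabiliser in $\Aut(X')$ may contain discrete elements interchanging the two hyperplanes, producing automorphisms of $H$ that do not preserve $X$. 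These are precisely the well-understood exceptions attached to Jordan algebras and the Freudenthal magic square, which I would enumerate from the classification of those $G/P$ arising as hyperplane sections of other generalized Grassmannians.

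The main obstacle is the reconstruction in Steps 2 and 3: proving uniformly, across the marked Dynkin diagrams, that the linear spaces and quadrics surviving the hyperplane cut stay large enough to pin down $X\subset\PP(V)$ rigidly, and isolating exactly the borderline Jordan-algebra cases where some dimension drops just enough to admit an extra discrete symmetry of $H$. Controlling these dimensions case by case, and verifying that no such extra symmetry ever lies in the connected component $\Aut^0(H)$, is where the real work lies.
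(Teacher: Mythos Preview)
Your outline shares the opening move with the paper --- Lefschetz gives linearity, so everything reduces to lifting $\phi\in\PGL(W)$ to $\PGL(V)$ --- but the mechanism you propose for the lift is different from the paper's, and in its present form it has a real gap.

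The paper does \emph{not} try to reconstruct individual points of $X\setminus H$ from the traces of linear spaces through them. Instead it works entirely on Hilbert schemes. The key device is the \emph{Unique Extension} property: if $m$ is the maximal dimension of a linear space in $X$, then (with a short list of exceptions) every $\PP^{m-1}\subset X$ that extends to a $\PP^m$ does so \emph{uniquely}. This gives a canonical extension map
\[
e_x:\ \Hilb_{\PP^{m-1}}(H_x)^{\mathrm{ext}}\ \longrightarrow\ \Hilb_{\PP^m}(X)=G/Q,
\]
which is an isomorphism off the locus $\Hilb_{\PP^m}(H_x)$. An automorphism $\phi$ of $H_x$ acts on $\Hilb_{\PP^{m-1}}(H_x)$, and through $e_x$ induces a birational self-map of $G/Q$ that also extends over the exceptional locus; Zariski's main theorem makes it a regular automorphism of $G/Q$, hence an element of $\Aut(G/Q)=\Aut(X)$ by Demazure. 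Your Step~2, by contrast, asserts that the traces $\Lambda\cap\PP(W)$ for $\Lambda\ni x$ pin down $x$ and that a permutation of traces linearises on $V$; neither claim is argued, and it is exactly here that the paper's UE property is doing the heavy lifting you are missing.

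Your use of quadrics is also of the wrong kind. You invoke the space $I_2(H)\subset\mathrm{Sym}^2W^*$ of quadratic \emph{equations}, but the paper uses quadric \emph{subvarieties} $\QQ^l\subset X$ of maximal dimension, with a completely parallel UE property (every submaximal quadric extends uniquely to a maximal one). This is what handles the cases $\OG(k,2n+1)$ with $k\le n-k$, $\IG(n,2n)$, $F_4/P_1$, and $F_4/P_4$, where UE for linear spaces fails. Comparing $I_2(H)$ with $I_2(X)$ would not, by itself, tell you that $\phi$ preserves the image of $I_2(X)\to I_2(H)$, which is what you would need.

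Finally, your justification of $\Aut^0(H)\subset\Aut(X)$ --- ``a connected group cannot interchange the discrete families'' --- misidentifies the obstruction. In the genuine exceptions $X=\IG(2,2n)$ and $X=F_4/P_4$, the parameter space of maximal linear spaces or quadrics is homogeneous under a group \emph{strictly larger} than $G$ (namely $\PGL_{2n}$, resp.\ $E_6$), so the argument only lifts $\phi$ to $\Aut(\cX)$ for the ambient $\cX=G(2,2n)$ or $E_6/P_1$. Showing that $\Aut^0(H)$ nonetheless lands in $\Aut(X)$ requires the separate Jordan-algebra analysis: one identifies $\Aut(H)$ with the stabiliser in $\Aut(\cX)$ of the pencil $L_x\subset\PP(\cJ_n)$, and checks that the image in $\PGL(L_x)$ is finite, so the identity component fixes a point of $L_x$ and hence lies in $\Aut(X)$. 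This is not a ``components of families'' issue at all.
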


Apart from the trivial cases of projective spaces and quadrics, 
the latter situation occurs exactly when $X=F_4/P_4$, or when $X=\IG(2,2n)$ is a Grassmannian of isotropic planes
with respect to some symplectic form. 
We give a complete treatment of these varieties, which have interesting connections with Jordan algebras and are also known
as coadjoint (but not adjoint) varieties. To give a sample of our results, we prove that for $H$ a smooth hyperplane section of $\IG(2,2n)$, the component group of 
$\Aut(H)$ can be an icosahedral group when $n$ is any even number such that $n> 10$ and 
$$n=0, \; 2, \; 12, \; 20, \; 30, \; 32, \; 42, \; 50 \quad \mathrm{modulo}\;  60.$$
Moreover, when this happens, there exist some automorphisms of $H$ that cannot be lifted to the symplectic group $G=\Sp_{2n}$ (we refer to Theorem \ref{thm_exact_sequence_aut_jordan}, the remark following it and Proposition \ref{lem_unnatural_aut_jordan} for more precise statements). 

Our methods are heavily based on the study of linear spaces and quadrics of high dimension contained in $G/P$ and its 
hyperplane sections. Of course this natural approach has been used before in closely related context, see for example 
\cite{mihai} and more recently \cite{kps}, where lines and conics are considered systematically. In our set-up 
the interest of rather considering linear spaces and quadrics of maximal or submaximal dimension (that is, maximal
minus one) stems from the useful Property we denoted (UE). This is simply the fact, not always but very often verified 
in our setting, that linear spaces and quadrics of submaximal dimension admit a unique extension to maximal dimension. 
Once this property is established, the lifting property for automorphisms of smooth hyperplane sections is in 
most cases easily confirmed.

\medskip
An important issue in the study of their automorphisms is that in general we are not able to describe the open set parametrizing smooth 
hyperplane sections of $G/P$, or equivalently the complement to this open set, which is the projective dual to 
$G/P$ (see \cite{tevelev} for an overview of what is known on this question). A remarkable exception is when $G/P$ 
is the {\it adjoint variety} of the simple Lie algebra $\fg=\Lie(G)$, that is,
the closed $G$-orbit inside $\PP (\fg)$ (or the projectivization of the minimal nontrivial nilpotent orbit). Then
an equation of the projective dual was obtained by Tevelev \cite{tevelev}, which for $G$ simply laced implies that the smooth 
hyperplane sections are exactly those that are defined by regular semisimple elements; while the situation 
is richer, and more complicated, for $G$ non simply laced (typically for the aforementioned case of $G_2$). 
We give a complete treatment of this case and describe all the possible automorphism groups of the smooth 
hyperplane sections. The main result is the following, where $R$ denotes the root system of $\fg$. 

\begin{theorem} 
\label{thm_intro_adjoint}
Let $H_x$ be a smooth hyperplane section of the adjoint variety of $\fg$, defined by the hyperplane 
Killing-orthogonal to $x\in\fg$. Let $x=x_s+x_n$ be the Jordan decomposition of $x$, where the semisimple 
part $x_s$ can be supposed to belong to a fixed Cartan subalgebra $\fh$ of $\fg$. Then there exists 
a subroot system $R^\perp$ of $R$, depending on $x$, of corank at most two, with Weyl group $W^\perp$,
such that $$\Aut(H_x)/N_G^0([x])=\Stab_{W^\perp}([x_s])\rtimes D_x,$$
where $D_x$ is contained inside the outer automorphism group $\Out(R^\perp)$ and 
is isomorphic either to $1, \ZZ_2$ or $\cS_3$.
\end{theorem}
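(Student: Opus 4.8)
The plan is to analyze $\Aut(H_x)$ by leveraging the fact (from Theorem~\ref{thm_1.1}) that every automorphism in $\Aut^0(H_x)$, and generically every automorphism, lifts to $\Aut(X)$ where $X$ is the adjoint variety. Since $H_x$ is cut out by the hyperplane Killing-orthogonal to $x$, an automorphism of $X$ preserves $H_x$ if and only if it preserves the corresponding point $[x]\in\PP(\fg)$ (or rather the hyperplane, hence the line $\CC x$ under the Killing form identification $\fg\cong\fg^*$). I would first identify $\Aut(X)$ with (essentially) the adjoint group $G$ extended by the relevant diagram automorphisms, so that the subgroup fixing $[x]$ is the stabilizer $N_G([x])$ together with outer contributions. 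The quotient $\Aut(H_x)/N_G^0([x])$ then measures the discrete symmetries of the pair $(X, [x])$ modulo the connected stabilizer.

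Next I would examine the Jordan decomposition $x=x_s+x_n$. After conjugating so that $x_s\in\fh$, I would introduce $R^\perp$ as the set of roots vanishing on $x_s$ (equivalently, the roots $\alpha$ with $\alpha(x_s)=0$); this is a subroot system whose reflections generate $W^\perp$, and the centralizer $\fg^{x_s}$ is reductive with root system $R^\perp$. The nilpotent part $x_n$ lives in the derived subalgebra of this centralizer. The corank-at-most-two claim should follow from the structure of $x$ as an element defining a \emph{smooth} hyperplane section: by Tevelev's description of the projective dual, smoothness constrains how degenerate $x_s$ can be, bounding the corank of $R^\perp$ inside $R$. I would then argue that the symmetries of $[x]$ modulo the connected centralizer are governed by: (i) the elements of the Weyl group $W^\perp$ of the centralizer that fix the line $[x_s]$ while being compatible with $x_n$, giving the factor $\Stab_{W^\perp}([x_s])$; and (ii) the diagram/outer automorphisms of $R^\perp$ preserving the configuration, assembled into $D_x\subseteq\Out(R^\perp)$.

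To pin down the semidirect product structure $\Stab_{W^\perp}([x_s])\rtimes D_x$, I would observe that $W^\perp$ sits inside $\Aut(R^\perp)=W^\perp\rtimes\Out(R^\perp)$ as the normal factor, so the outer part acts on $\Stab_{W^\perp}([x_s])$ by conjugation, yielding the semidirect decomposition once we verify that the $D_x$-part actually lifts to genuine automorphisms of $H_x$. Establishing that $D_x$ is isomorphic to one of $1$, $\ZZ_2$, or $\cS_3$ should reduce to the classification of outer automorphism groups of root systems: $\Out(R)$ is nontrivial only in types $A_n$ ($\ZZ_2$), $D_n$ ($\ZZ_2$, with $D_4$ giving $\cS_3$), and $E_6$ ($\ZZ_2$); so the only way to obtain $\cS_3$ is through a $D_4$ factor (triality) inside $R^\perp$. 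I would enumerate which $x$ produce such a factor, matching the exceptional cases to the Jordan-algebra phenomena highlighted in the introduction.

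The hard part will be step (ii): showing that the discrete symmetries genuinely \emph{come from} automorphisms of $H_x$ rather than merely from abstract symmetries of the root-theoretic data, and conversely that \emph{every} automorphism of $H_x$ arises this way. The forward direction needs the liftability result of Theorem~\ref{thm_1.1}, but one must handle carefully the borderline situations where not all automorphisms lift---precisely the Jordan-algebra exceptions. Identifying exactly which outer symmetries are realized (and that they assemble into $D_x$ with the stated semidirect structure, not a larger or differently-extended group) will require a case-by-case verification across the simple types, using the explicit geometry of linear spaces and quadrics in $H_x$ developed via Property (UE) to detect and realize the symmetries concretely.
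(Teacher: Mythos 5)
Your overall strategy matches the paper's: reduce to $\Aut(H_x)=\Stab_{\Aut(\fg)}([x])$ via the lifting theorem, then analyze this stabilizer root-theoretically through the Jordan decomposition, Weyl-group stabilizers, and outer automorphisms. However, there is a genuine error at the heart of your root-theoretic setup: you define $R^\perp$ as the set of roots vanishing on $x_s$, i.e.\ the root system of the centralizer of $x_s$. This is exactly backwards relative to what the theorem needs. In the paper, the roots $\alpha$ with $\alpha(x_s)=0$ span a subspace $\fj\subset\fh$, and smoothness of $H_x$ (via Tevelev's formula, whose equation is the product of the \emph{long} roots only) forces these vanishing roots to be short and to generate a system of type $A_1$ or $A_2$ --- so this system has \emph{rank} at most two, not corank at most two. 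The paper's $R^\perp$ is then $R\cap\fk$ where $\fk=\fj^\perp\subset\fh$: the subsystem orthogonal to the vanishing roots, which consequently has corank at most two and, crucially, contains $x_s$ as a \emph{regular} element, so that Springer's theory \cite{Springer} computes $\Stab_{W^\perp}([x_s])$ as a cyclic group. With your definition the formula fails immediately: every reflection in a root vanishing on $x_s$ fixes $x_s$ pointwise, so your $\Stab_{W^\perp}([x_s])$ would be all of your $W^\perp$; and for $x$ regular semisimple your $R^\perp$ is empty, so the quotient $\Aut(H_x)/N_G^0([x])$ would be forced to be (essentially) trivial --- contradicting, e.g., the fact that $-\id\in W(E_8)$ fixes $[x_s]$ and yields a nontrivial component, and more generally the cyclic groups $\ZZ_d$ of Table \ref{tab:orderofstabilizers}.

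Two further points. First, your worry about ``borderline situations where not all automorphisms lift'' is misplaced for adjoint varieties: among them only type $A$ (where adjoint $=$ coadjoint) admits unnatural automorphisms, and the paper excludes types $A$ and $C$ from this analysis, treating them separately via Jordan algebras; for all other types every automorphism of $H_x$ lifts, which is what makes Proposition \ref{prop_containment} an equality (its proof also requires an injectivity argument you do not address). Second, the semidirect structure with $D_x$ is not obtained by invoking the abstract decomposition $\Aut(R^\perp)=W^\perp\rtimes\Out(R^\perp)$; it requires (i) a normality argument showing $N_G([x])\subset W^\perp\ltimes(T^\perp\times S(\fj))$, (ii) in the $F_4$ cases, the explicit construction (Lemma \ref{lemouterF4}) of an element of $W$ acting simultaneously as an outer automorphism on both $A_2$ subsystems, producing $B_x$, and (iii) in the simply laced cases, determining when elements of $\Gamma=\Out(\fg)$ stabilize $[x]$ (Proposition \ref{lemgammadescends}), producing $C_x$; the paper then observes $B_x$ and $C_x$ cannot both be nontrivial and sets $D_x=B_xC_x$. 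Your sketch conflates these two distinct sources of $D_x$, and without the corrected definition of $R^\perp$ neither can be set up.
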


Here $N_G^0([x])$ denotes the connected component of the normalizer in $G$ of the line $[x]$ and can easily 
be described; in most cases this is just a maximal torus. 
With respect to $R^\perp$, $x_s$ is always regular and the stabilizer $\Stab_{W^\perp}([x_s])$ can be 
deduced from classical results of Springer \cite{Springer}. Even if we restrict to $x$ regular semisimple, 
it turns out that the
finite part of the automorphism group can vary in a rather subtle way. For a complete description of the possible automorphism groups of hyperplane sections of adjoint varieties, we refer to Table \ref{final_table}.

\smallskip
Finally, let us point out that the last two classes of hyperplane sections (related to Jordan algebras or contained in
adjoint varieties) constitute an interesting testing ground for Dubrovin's conjecture in mirror symmetry, which is now
known to be true for many rational homogeneous spaces but not much beyond that. Indeed, the fact that these hyperplane 
sections have a big automorphism group (and admit in particular an action of a \emph{big} torus) allows us to use equivariant and localization techniques in order to understand their cohomology. A recent attempt to test Dubrovin's conjecture, 
starting from the (quantum) cohomology side, can be found in \cite{beneperrin}.


\medskip\noindent {\it Acknowledgements.} We would like to thank Mikhail Zaidenberg for his useful comments on a preliminary version of the paper. 
We would also like to express our gratitude to Alexander Kuznetsov for his careful and insightful reading,  which led to significant improvements.

We acknowledge support from the ANR project FanoHK, grant
ANR-20-CE40-0023.

\section{Preliminaries} 

In this section we recall a few basic facts about  generalized Grassmannians and their linear spaces, mostly extracted from \cite{lm-proj}. First recall that simple complex Lie algebras are classified by (connected) Dynkin diagrams, whose nodes we 
number as in \cite{bourbaki}. Simple Lie algebras can be upgraded to simple Lie groups, whose actions on projective varieties 
are of interest. The following definition is classical. 

\begin{definition} A generalized Grassmannian is a rational complex projective homogeneous variety with Picard number one.
\end{definition} 

Another classical fact is that generalized Grassmannians are classified by Dynkin diagrams with one marked node.  This 
has to be taken with a grain of salt: a generalized Grassmannian can always be described as $X=G/P$, where $P$ is a maximal 
parabolic subgroup of a simple Lie group $G$; then $P$ can be defined (uniquely, up to conjugation) by the choice of a
node on the Dynkin diagram of $\Lie(G)$. However it sometimes happens (in the {\it exceptional cases}, according to the slightly 
confusing terminology used in \cite{demazure}) that $X=G'/P'$ with $\Lie(G')\ne \Lie(G)$. In order to avoid
such inconveniences, we will always suppose that $G$ coincides with $\Aut(X)$, at least up to a finite group. 

By the way note that $\Aut(X)$ can be described explicitly. Recall that $\Aut(G)$, say for $G$ adjoint, 
contains $G$ as the normal subgroup of inner automorphisms, while $\Out(G)=\Aut(G)/G$ can be identified with
the symmetry group of the Dynkin diagram. The following extension is due to Demazure \cite[Proposition 1]{demazure}; it holds for any complex projective rational  homogeneous variety  (the corresponding result for the connected group of automorphisms was already contained in \cite{oniscik}).

\begin{theorem}
\label{thm_Demazure}
Suppose that $X=G/P$ is not isomorphic to any $G'/P'$ with $\Lie(G')$ bigger than $\Lie(G)$. Then 
$$\Aut(X)=\Aut(G,X)\subset \Aut(G)$$
is the preimage of the subgroup of $\Out(G)$ acting on the Dynkin diagram by fixing the nodes that 
define $X$. 
\end{theorem}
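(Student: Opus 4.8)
The plan is to split the statement into its connected and discrete parts, treating $\Aut^0(X)$ first and then the component group $\Aut(X)/\Aut^0(X)$. Since $X$ is Fano, $\Aut(X)$ is a linear algebraic group whose Lie algebra is the space of global vector fields $\Ho^0(X,T_X)$, so identifying $\Aut^0(X)$ amounts to computing this space. Writing $T_X=G\times_P(\fg/\fp)$ as the homogeneous bundle attached to the $P$-module $\fg/\fp$, I would evaluate $\Ho^0(X,T_X)$ through the Borel--Weil--Bott theorem. The infinitesimal $G$-action always yields an inclusion $\fg\hookrightarrow\Ho^0(X,T_X)$, so the crux is to show that, under the hypothesis on $X$, this inclusion is an equality; equivalently, that $X$ carries no ``unexpected'' vector fields. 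Granting this, $\Aut^0(X)$ has Lie algebra $\fg$ and acts effectively, hence coincides with the adjoint group $G^{\mathrm{ad}}$.

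Once $\Aut^0(X)=G^{\mathrm{ad}}$ is pinned down, the passage to the full group is formal. Every $\phi\in\Aut(X)$ preserves the identity component $\Aut^0(X)$, so conjugation $g\mapsto\phi g\phi^{-1}$ defines an element $c_\phi\in\Aut(G^{\mathrm{ad}})$, and since for the adjoint group $\Aut(G^{\mathrm{ad}})=\Aut(\fg)=G^{\mathrm{ad}}\rtimes\Out(G)$, we obtain a homomorphism $c\colon\Aut(X)\to\Aut(\fg)$. For injectivity, suppose $c_\phi=\id$, so that $\phi$ commutes with the transitive action of $G^{\mathrm{ad}}$. Fixing a base point $x_0$ with stabilizer $P$ and writing $\phi(x_0)=g\,x_0$, commutation with the stabilizer forces $g$ to normalize $P$; as any parabolic subgroup is self-normalizing ($N_G(P)=P$), we get $g\in P$, hence $\phi(x_0)=x_0$, and then $\phi=\id$ by transitivity.

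It remains to identify the image of $c$ with $\Aut(G,X)$, the preimage in $\Aut(\fg)$ of the subgroup of $\Out(G)$ stabilizing the set $I$ of marked nodes. Inner automorphisms lie in the image, since the elements of $G^{\mathrm{ad}}$ act on $X$ by left translation. A diagram automorphism $\sigma$ sends $P$ to the parabolic $\sigma(P)$ attached to $\sigma(I)$; post-composing the isomorphism $gP\mapsto\sigma(g)\sigma(P)$ with a left translation, $\sigma$ descends to a self-map of $X$ exactly when $\sigma(P)$ is $G$-conjugate to $P$. Again by self-normalization this happens if and only if $\sigma(I)=I$, and otherwise $\sigma$ only produces an isomorphism onto the a priori different variety $G/\sigma(P)$. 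Hence the image of $c$ is precisely the preimage of $\Stab_{\Out(G)}(I)$, that is $\Aut(G,X)$, which proves the theorem.

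The genuine obstacle is the single analytic input $\Ho^0(X,T_X)=\fg$ of the first paragraph: the entire list of exclusions hides there, because the extra global vector fields appearing in the exceptional cases are exactly what enlarge $\fg$ to a strictly bigger $\Lie(G')$, the situation the hypothesis forbids. I would handle this either by quoting Onishchik's classification of homogeneous spaces with extra automorphisms, or by running the Borel--Weil--Bott computation and checking that the relevant cohomology contributes nothing beyond the adjoint representation outside the listed coincidences (projective spaces, quadrics, and the few spinor and symplectic exceptions). Everything after that step is bookkeeping about parabolics and diagram symmetries.
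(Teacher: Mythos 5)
The paper does not actually prove this statement: it is quoted as Demazure's theorem, with the reference \cite[Proposition 1]{demazure} for the full automorphism group and \cite{oniscik} for the identity component, so there is no internal proof to compare yours against. Your proposal is, in outline, a correct reconstruction of that classical argument: identify $\Aut^0(X)$ with the adjoint group via $\Ho^0(X,T_X)=\fg$, then recover the component group from the conjugation action on $\Aut^0(X)$, using $N_G(P)=P$ for injectivity and the parametrization of conjugacy classes of parabolics by subsets of simple roots to identify the image with $\Aut(G,X)$. The formal second and third steps are sound as written.

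Two points in the first step deserve attention. First, Borel--Weil--Bott does not apply directly to $T_X=G\times_P(\fg/\fp)$, because $\fg/\fp$ is not an irreducible $P$-module in general; one must filter it by irreducible quotients and chase the resulting exact sequences, which is where Demazure's actual argument does genuine work. Second -- and this makes your ``genuine obstacle'' lighter than you think -- under the stated hypothesis no classification is needed at all: if $\Ho^0(X,T_X)\supsetneq\fg$, then $G':=\Aut^0(X)$ is a linear algebraic group acting faithfully and transitively on $X$; its radical, being connected solvable, has a fixed point in the projective variety $X$ by Borel's fixed point theorem, and since it is normal and $X$ is homogeneous its fixed locus is all of $X$, so the radical is trivial. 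Hence $G'$ is semisimple and $X\cong G'/P'$ with $P'$ parabolic and $\Lie(G')$ strictly bigger than $\fg$, which is exactly what the hypothesis forbids. Onishchik's enumeration of the exceptional coincidences is only needed to decide \emph{when} the hypothesis holds, not to prove the theorem as stated. Finally, your last paragraph verifies that every element of $\Aut(G,X)$ yields an automorphism of $X$, but the reverse containment should be made explicit: for $\phi\in\Aut(X)$ the stabilizer of $\phi(x_0)$ equals $c_\phi(P)$, which is therefore $G$-conjugate to $P$, and this is what forces the outer part of $c_\phi$ to fix the marked nodes.
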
 

\medskip
As we have seen, nodes of Dynkin diagrams are (essentially) in correspondence with generalized Grassmannians. 
More classically, 
they are also in correspondence with simple roots, which are either all of the same length (in the ADE, or simply laced 
types), or of two possible lengths (in the BCFG types). We will therefore distinguish generalized Grassmannians of 
long (including all the simply laced cases) or short type. 

When one is interested in linear spaces, as we will be in the sequel, this makes an essential difference. Indeed, recall that any generalized Grassmannian $X=G/P$  admits a 
minimal $G$-equivariant embedding (defined by the ample generator of its Picard group, and generalizing the Pl\"ucker
embedding of an ordinary Grassmannian) inside the projectivization $\PP (V)$ of a simple (even fundamental) $G$-module. 
We can therefore consider linear subspaces with respect to this minimal embedding. The following two statements, with
more details, can be 
found in \cite{lm-proj}.

\begin{theorem}
Projective lines in a generalized Grassmannian of long type $X=G/P$ are parametrized by a $G$-homogeneous 
variety. When $X=G/P$ is a generalized Grassmannian of short type, the variety of projective lines contained in it has two $G$-orbits. 
\end{theorem}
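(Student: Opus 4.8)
The plan is to turn the global classification of lines into a local problem at a single point, governed by the grading attached to the marked node, and then to read off the number of orbits. Fix a highest weight vector $v_\omega$ and the base point $x_0=[v_\omega]\in X$, whose stabiliser is the maximal parabolic $P=L\ltimes U$ with $L$ its Levi factor. If $\alpha_k$ is the simple root marking the node, let $\fg=\bigoplus_i\fg_i$ be the grading in which $\fg_i$ collects the root spaces whose coefficient on $\alpha_k$ equals $i$; thus $\fg_0=\Lie(L)$, each $\fg_i$ is an irreducible $L$-module (this is where maximality of $P$ enters), and $T_{x_0}X\cong\bigoplus_{i\ge 1}\fg_{-i}$. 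A projective line of the minimal embedding through $x_0$ is the orbit of an $\SL_2$ attached to an $\fsl_2$-triple, and its tangent direction at $x_0$ lies in $\PP(\fg_{-1})$; denote by $\cL_{x_0}\subset\PP(\fg_{-1})$ the resulting variety of tangent directions to lines. The incidence variety of pointed lines fibres $G$-equivariantly over $X$ with fibre $\cL_{x_0}$, and the residual action on $\cL_{x_0}$ factors through $L$; forgetting the marked point then identifies the $G$-orbits of lines in $X$ with the $L$-orbits on $\cL_{x_0}$. Everything is thus reduced to describing the $L$-variety $\cL_{x_0}$.

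Suppose first that $X$ is of long type, so that $\alpha_k$ is long (in particular this includes all simply laced $\fg$) and the grading has no piece of degree two, $\fg_{-2}=0$. Then the line directions fill out exactly the unique closed $L$-orbit of $\PP(\fg_{-1})$: concretely, all the roots $\beta$ with $\langle\omega,\beta^\vee\rangle=1$ lie in a single $W_L$-orbit, so the corresponding lowest weight directions form one $L$-homogeneous subvariety $\cL_{x_0}=L/Q$, a rational homogeneous space for the semisimple part of $L$ determined by the nodes adjacent to $k$. By the reduction above the whole Fano variety of lines is then a single $G$-orbit $G/P'$, with $P'$ obtained by marking those adjacent nodes; for $X=\mathrm{Gr}(r,n)$ this recovers the classical description of lines as the partial flags $V_{r-1}\subset V_{r+1}$.

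Now suppose $X$ is of short type, so $\alpha_k$ is short and there are positive roots of coefficient two on $\alpha_k$, equivalently $\fg_{-2}\neq 0$. The effect is that $\cL_{x_0}$, although sitting inside the irreducible module $\fg_{-1}$, is no longer a single $L$-orbit: besides the closed orbit one finds a second one, the two being separated by an invariant bilinear form carried by the relevant $L$-module. This is cleanest on the model $X=\IG(2,2n)=\Sp_{2n}/P_2$, where a line is a pencil $\{V_2:V_1\subset V_2\subset V_3\}$ of isotropic planes with $\dim V_1=1$, $\dim V_3=3$. Imposing that every member be isotropic leaves exactly two possibilities: either $V_3$ is itself isotropic, so that $V_1\subset V_3$ is an isotropic flag, or the symplectic form restricted to $V_3$ has rank two and $V_1$ is its radical, i.e. $V_1\subset V_3\subset V_1^\perp$ with $V_3$ non-isotropic. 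At the base plane these two families have genuinely different tangent directions — for the first the induced line in $V_1^\perp/V_2$ is isotropic, for the second it is not — so they give two distinct $L$-orbits on $\cL_{x_0}$, hence two $G$-orbits of lines, parametrised respectively by $\IFl(1,3;2n)$ and by the flag variety of the non-isotropic triples.

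The crux, and the step I expect to be the main obstacle, is to establish the dichotomy uniformly: that $\cL_{x_0}$ is exactly the closed $L$-orbit in the long case, and exactly the union of two $L$-orbits in the short case, with nothing more. In the long case this is immediate once $\cL_{x_0}$ is identified, but in the short case one must check, for each of the finitely many short-type generalized Grassmannians (occurring in types $B$, $C$, $F_4$ and $G_2$), that the presence of degree-two roots splits the line directions into precisely two $L$-orbits and no further ones, and that the two families stay distinct under $G$ — here distinguished, as above, by whether the plane spanned by the pencil is isotropic. This is a finite, representation-theoretic verification carried out via the graded decomposition $\fg=\bigoplus_i\fg_i$; the details, together with the explicit homogeneous models, are those recorded in \cite{lm-proj}.
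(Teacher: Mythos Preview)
The paper does not prove this theorem; it simply quotes it from \cite{lm-proj}. Your overall architecture --- pass to the base point $x_0$, identify $G$-orbits on the Fano variety of lines with $P$-orbits (hence $L$-orbits) on the variety $\cL_{x_0}$ of lines through $x_0$ --- is exactly the right one and is what \cite{lm-proj} does.

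However, two of the key claims you use to execute this strategy are incorrect. First, the assertion that ``$\alpha_k$ long $\Rightarrow \fg_{-2}=0$'' conflates \emph{long type} with \emph{cominuscule}. The latter means that $\alpha_k$ appears with coefficient $1$ in the highest root, which is much stronger. Any adjoint variety not of type $A$ or $C$ gives a counterexample: for $G_2/P_1$ (Bourbaki labelling, $\alpha_1$ long) the highest root is $2\alpha_1+3\alpha_2$, so $\fg_{\pm 2}\ne 0$; in type $E_8$ every node is long and none is cominuscule. So the long/short dichotomy is not governed by the vanishing of $\fg_{-2}$.

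Second, the claim that the tangent directions to lines always lie in $\PP(\fg_{-1})$ fails precisely in the short case, and already in your own example. For $\IG(2,2n)$ take the $T$-fixed line of your type~(b): $V_1=\langle e_1\rangle$, $V_3=\langle e_1,e_2,e_{-2}\rangle$, so $V_2(t)=\langle e_1,e_2+te_{-2}\rangle$. Its tangent at $V_2^0=\langle e_1,e_2\rangle$ is the root vector for $2\epsilon_2$, whose $\alpha_2$-coefficient is $2$; hence the direction lies in $\fg_{-2}$, not $\fg_{-1}$. So $\cL_{x_0}\not\subset\PP(\fg_{-1})$ here, and the ``invariant bilinear form on $\fg_{-1}$'' cannot be the separating invariant as stated.

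The genuine mechanism in \cite{lm-proj} is finer: one identifies $\cL_{x_0}$ explicitly as an $L$-variety inside $\PP(T_{x_0}X)$ and checks, type by type, that the $T$-fixed lines through $x_0$ (equivalently the roots $\alpha$ with $[v_\omega,v_{\omega-\alpha}]\subset X$) fall into one $W_L$-orbit when $\alpha_k$ is long and two when $\alpha_k$ is short. Your concluding paragraph already concedes this is ultimately a finite verification; the issue is only that the heuristic reasons you propose for the split are not the correct ones.
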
 

In the long case, one can push the analysis further and describe all the linear spaces. 
Recall first that $\PP^n=\SL_{n+1}/P$ is defined, as a generalized Grassmannian, by a Dynkin diagram $A_n$ 
marked by  one of its extremal nodes. Inside the marked Dynkin diagram defining $G/P$, one can 
cut out such a diagram by erasing some finite set $S$ of nodes, which in turn defines a parabolic
subgroup $P_S$ of $G$. Tits' theory of {\it shadows} implies that $G/P_S$ parametrizes a family 
of $\PP^n$'s on $G/P$. 

 Let us recall the general construction of shadows in \cite{tits}. Consider a set $S'$ of nodes, disjoint from $S$, and denote by $D'$ the subdiagram of $D$ from which the nodes of $S'$ have been erased (as well as the edges connecting them to other nodes). The variety $G/P_{S \cup S'}$ projects to both $G/P_S$ and $G/P_{S'}$. The fibers of the projection $G/P_{S\cup S'}\to G/P_{S}$ are encoded in the Dynkin diagram: they are homogeneous varieties $H/Q_{S'}$, where $H$ is a semisimple Lie group with Dynkin diagram $D'$, and $Q_{S'}\subset H$ is a 
 parabolic subgroup defined by $S'\subset D'$. Thus, $G/P_{S}$ parametrizes a family of subvarieties of $G/P_{S'}$ which are isomorphic to $H/Q_{S'}$.

\begin{theorem}
\label{thm_subdiagrams_An}
In the long case, the copies of $\PP^n$ inside the generalized Grassmannian $X=G/P$ are parametrized by a 
finite union, that we denote $\Hilb_{\PP^n}(G/P)$, of $G$-homogeneous varieties $G/P_S$, where $S$ is such 
that its complement inside the 
marked Dynkin diagram of $X$ contains the marked Dynkin diagram of $\PP^n$ as a connected component. 
\end{theorem}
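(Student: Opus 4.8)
The plan is to prove the two inclusions separately: that each admissible set $S$ produces a $G$-homogeneous family of $\PP^n$'s in $X$, and conversely that every $\PP^n\subset X$ belongs to one such family up to the $G$-action; finiteness of the collection of admissible $S$ then gives the finite union.

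For the first inclusion I would apply the shadow construction recalled above, taking $S'=\{\alpha_0\}$ to be the single node marking $X$, so that $G/P_{S'}=X$. For a set $S$ of nodes not containing $\alpha_0$, the fiber of the projection $G/P_{S\cup S'}\to G/P_S$ is the flag variety of the semisimple part of the Levi of $P_S$, whose Dynkin diagram is the complement $D\setminus S$, marked only at $\alpha_0$. Since only $\alpha_0$ is marked, the connected components of $D\setminus S$ not containing $\alpha_0$ contribute a point, while the component containing $\alpha_0$ contributes its associated generalized Grassmannian; this fiber is therefore a $\PP^n$ exactly when that component is a diagram $A_n$ with $\alpha_0$ at an extremal node, i.e. exactly when the complement of $S$ contains the marked diagram of $\PP^n$ as a connected component. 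Mapping these fibers into $X=G/P_{S'}$ via the other projection realizes $G/P_S$ as a family of $\PP^n$'s in $X$, as required.

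For the converse I would first reduce to torus-fixed linear spaces. The $\PP^n$'s contained in $X$ form a closed, $G$-stable subscheme of the relevant Hilbert scheme, which is projective; applying the Borel fixed point theorem to the closure of any $G$-orbit, every such $\PP^n$ degenerates to a $T$-fixed one, where $T\subset G$ is a maximal torus. A $T$-fixed $\PP^n$ is the projectivization of a span of weight vectors $\langle v_{\mu_0},\dots,v_{\mu_n}\rangle$ of the fundamental module $V$ defining the embedding, and its torus-fixed points force the $\mu_i$ to be extremal weights, i.e. to lie in the Weyl orbit of the highest weight $\omega$. It then suffices to classify the sets of extremal weights whose span projects into the closed orbit $X$, and to show that, up to the Weyl group, they are precisely the configurations arising from the $A_n$-subdiagram shadows of the first part. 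The heart of the argument, and the main obstacle, is exactly this combinatorial classification: using that the affine cone over $X$ consists of the highest weight vectors, the condition $\PP(\langle v_{\mu_i}\rangle)\subset X$ requires each $v_{\mu_i}$, and more subtly each of their linear combinations, to be extremal for a suitable sub-root-system. Analyzing the tangent directions at the base point $[v_\omega]$, namely the weight vectors $v_{\omega-\beta}$ with $\beta$ running over the positive roots whose root space is not contained in $\fp$, one shows that an integrable chain of such directions forms a string governed by a subdiagram of type $A$. It is precisely here that the long-type hypothesis enters: in the long case the projective lines in $X$ form a single $G$-orbit (as recalled above), which forces the relevant root strings to be simply laced of type $A$ with the marking at an extremal node, whereas in the short case the second orbit of lines would produce additional, non-$A_n$ configurations.

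Finally, since the Dynkin diagram $D$ is finite, there are only finitely many admissible subsets $S$, and the construction attaches to each a single $G$-orbit of $\PP^n$'s, hence a single homogeneous space $G/P_S$. The parametrizing space $\Hilb_{\PP^n}(G/P)$ is therefore the finite union of these $G/P_S$, which is the assertion of the theorem.
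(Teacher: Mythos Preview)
The paper does not prove this theorem; it is quoted from \cite{lm-proj}. Your shadow construction for the first inclusion is correct. The converse, however, has a real gap: Borel's fixed point theorem only gives a $T$-fixed point in the \emph{closure} of each $G$-orbit in $\Hilb_{\PP^n}(X)$, not in the orbit itself, so the $T$-fixed point you produce may lie in a strictly smaller boundary orbit. Consequently, even a complete classification of $T$-fixed $\PP^n$'s (and your sketch of that classification is itself vague---the condition that every linear combination of the $v_{\mu_i}$ lie on the cone over $X$ is far stronger than extremality of the individual $\mu_i$, and the passage from ``tangent directions'' to ``$A_n$-strings'' is asserted rather than proved) would only show that the \emph{closed} $G$-orbits are the $G/P_S$. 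You still have to exclude non-closed $G$-orbits. The standard way to close this gap is a normal-bundle / tangent-space computation at a shadow point showing $\dim T_{[\PP^n]}\Hilb_{\PP^n}(X) = \dim G/P_S$, so that $G/P_S$ is open as well as closed in its component; you do not mention this step at all.

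The argument in \cite{lm-proj} avoids the issue by a different reduction: since $G$ is transitive on $X$, translate any $\PP^n$ to pass through the base point $o=[v_\omega]$; a $\PP^n$ through $o$ is then determined by a $\PP^{n-1}$ in the variety of lines through $o$, which in the long case is itself a generalized Grassmannian for the semisimple part of the Levi of $P$ (this is precisely where the long hypothesis enters, via the single-orbit statement for lines). One then inducts on the rank, and the $G$-homogeneity of each family of $\PP^n$'s falls out directly, without needing to rule out extra orbits a posteriori.
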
  

Throughout the paper, for an embedded projective variety $T\subset\PP^N$, we use the (sloppy) notation 
$\Hilb_{\PP^n}(T)$ for the closed subvariety of the Grassmannian parametrizing copies of $\PP^n $ linearly embedded in $T$.

\medskip\noindent {\it Example}. Let us illustrate this procedure for the so called Cayley plane $X=E_6/P_1$. Applying the previous recipe we get 
$$\begin{array}{lll} 
\Hilb_{\PP^1}(X)=E_6/P_3, & \Hilb_{\PP^2}(X)=E_6/P_4, & \Hilb_{\PP^3}(X)=E_6/P_{2,4}, \\
\Hilb_{\PP^4}(X)=E_6/P_5\cup E_6/P_{2,6}, & \Hilb_{\PP^5}(X)=E_6/P_2, & \Hilb_{\PP^6}(X)=\emptyset . 
\end{array}$$
This can be checked by chasing the marked subdiagrams of type $(A_n,\alpha_1)$ inside $(E_6,\alpha_1)$ ($\alpha_i$ being the $i$th simple root of the corresponding Dynkin diagram in Bourbaki's convention), 
which are the following ones:

\setlength{\unitlength}{4mm}
\thicklines
\begin{picture}(18,5.9)(.5,-1.5)
\multiput(2,2)(2,0){4}{$\circ$}\put(4,0){$\circ$}
\put(0,2){$\bullet$}
\multiput(0.3,2.2)(2,0){4}{\line(1,0){1.8}}
\put(4.2,0.3){\line(0,1){1.8}}
\multiput(-.2,1.7)(1,0){2}{\line(0,1){1}}
\multiput(-.2,1.7)(0,1){2}{\line(1,0){1}}

\multiput(13,2)(2,0){4}{$\circ$}\put(15,0){$\circ$}
\put(11,2){$\bullet$}
\multiput(11.3,2.2)(2,0){4}{\line(1,0){1.8}}
\put(15.2,0.3){\line(0,1){1.8}}
\multiput(10.8,1.7)(3,0){2}{\line(0,1){1}}
\multiput(10.8,1.7)(0,1){2}{\line(1,0){3}}

\multiput(24,2)(2,0){4}{$\circ$}\put(26,0){$\circ$}
\put(22,2){$\bullet$}
\multiput(22.3,2.2)(2,0){4}{\line(1,0){1.8}}
\put(26.2,0.3){\line(0,1){1.8}}
\multiput(21.8,1.7)(5,0){2}{\line(0,1){1}}
\multiput(21.8,1.7)(0,1){2}{\line(1,0){5}}

\color{blue}
\put(2,2){$\bullet$}
\put(15,2){$\bullet$}
\put(28,2){$\bullet$}
\put(26,0){$\bullet$}
\end{picture}

\begin{picture}(18,5.9)(.5,-1.5)
\multiput(2,2)(2,0){4}{$\circ$}\put(4,0){$\circ$}
\put(0,2){$\bullet$}
\multiput(0.3,2.2)(2,0){4}{\line(1,0){1.8}}
\put(4.2,0.3){\line(0,1){1.8}}
\put(-.2,1.7){\line(0,1){1}}
\put(-.2,1.7){\line(1,0){4}}
\put(-.2,2.7){\line(1,0){5}}
\put(4.8,-.3){\line(0,1){3}}
\put(3.8,-.3){\line(0,1){2}}
\put(3.8,-.3){\line(1,0){1}}

\multiput(13,2)(2,0){4}{$\circ$}\put(15,0){$\circ$}
\put(11,2){$\bullet$}
\multiput(11.3,2.2)(2,0){4}{\line(1,0){1.8}}
\put(15.2,0.3){\line(0,1){1.8}}
\multiput(10.8,1.7)(7,0){2}{\line(0,1){1}}
\multiput(10.8,1.7)(0,1){2}{\line(1,0){7}}

\multiput(24,2)(2,0){4}{$\circ$}\put(26,0){$\circ$}
\put(22,2){$\bullet$}
\multiput(22.3,2.2)(2,0){4}{\line(1,0){1.8}}
\put(26.2,0.3){\line(0,1){1.8}}
\multiput(21.8,1.7)(9,0){2}{\line(0,1){1}}
\multiput(21.8,1.7)(0,1){2}{\line(1,0){9}}

\color{blue}
\put(6,2){$\bullet$}
\put(19,2){$\bullet$}
\put(15,0){$\bullet$}
\put(26,0){$\bullet$}
\end{picture}

The nodes defining the parameter space in each case have been marked in blue.

\medskip
The short case is more complicated and a uniform description is lacking. Note that in type $B$ there is only one
short classical Grassmannian, the orthogonal Grassmannian $\OG(n,2n+1)=\SO_{2n+1}/P_n$; in fact this is one of the 
exceptional cases in the sense of Demazure, since it is isomorphic to a spinor variety, that is, one of the 
two connected components of $\OG(n+1,2n+2)$, which in turn is a generalized Grassmannian of type $D_{n+1}$. 
In type $G_2$ there are only two generalized Grassmannians, both of dimension five; one of them is actually a quadric,
and the other one is of long type. 

So we really remain only with the generalized Grassmannians in type $C$, that is the symplectic Grassmannians 
$\IG(k,2n)$, which are of short type for $k<n$; and the two generalized Grassmannians of short type for $F_4$,
namely $F_4/P_3$ and $F_4/P_4$.

\section{General strategy}\label{strategy}

\subsection{Long roots} 
\label{sec_gen_strategy}
Consider a generalized Grassmannian $G/P\subset\PP(V)$ of long type (in particular, not exceptional in the sense of 
Demazure). 
Let $H_x$ be a smooth hyperplane section defined by $x\in V^\vee$, $x\neq 0$. By the Lefschetz theorem, if $\dim(G/P)>3$, we know that $H_x$ has also Picard 
number one, and therefore every automorphism is linear. We would like to be able to extend any  such  automorphism to an automorphism of $G/P$ fixing $[x]$.  Our strategy will be the following. 

\medskip\noindent Suppose first that we are in the most favourable situation, where 
$$\Hilb_{\PP^m}(G/P)=G/Q\qquad \mathrm{and}\qquad \Hilb_{\PP^{m-1}}(G/P)=G/R$$
are both $G$-homogeneous (in general, we only know that each connected component is $G$-homogeneous). 
Suppose moreover that \emph{Property (UE)} holds, in the sense that each $\PP^{m-1}$ contained in $G/P$ can be 
extended to a unique $\PP^m$ (we will soon give a slightly more general definition). Then consider the 
nested variety
\[
Z_x := \{ (P_{m-1},P_m) \in \Hilb_{\PP^{m-1}}(H_x) \times \Hilb_{\PP^m}(G/P)
             \mid P_{m-1} \subset P_m \}.
\]
By Property (UE), the projection $p$ of $Z_x$ to $\Hilb_{\PP^{m-1}}(H_x)$ is  an isomorphism. 
Moreover, the projection $q$ from $Z$ to  $\Hilb_{\PP^m}(G/P)$ is birational, since more precisely it is an isomorphism 
outside $\Hilb_{\PP^m}(H_x)$, over which the fibers are copies of $\PP^m$. Since being contained in a hyperplane gives $m+1$
conditions on a $\PP^m$, the expected codimension of $\Hilb_{\PP^m}(H_x)$ is $m+1$. By \cite[Theorem 1.1]{ein-sb}, we can deduce
the following statement.

\begin{prop}
In the previous setting, where Property (UE) does hold, suppose that $\Hilb_{\PP^m}(H_x)\subset\Hilb_{\PP^m}(G/P)$ is smooth and irreducible,
of the expected codimension. 
Suppose also that $\Hilb_{\PP^{m-1}}(H_x)$ is smooth. Then it coincides with the blowup of $\Hilb_{\PP^m}(G/P)$ along 
$\Hilb_{\PP^m}(H_x)$. 
\end{prop}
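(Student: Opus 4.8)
The plan is to apply the blow-up criterion of \cite[Theorem 1.1]{ein-sb} to the projection $q\colon Z_x\to \Hilb_{\PP^m}(G/P)$, and then to transport the conclusion back along the isomorphism $p$. Set $Y:=\Hilb_{\PP^m}(G/P)=G/Q$, which is smooth because it is $G$-homogeneous, and $W:=\Hilb_{\PP^m}(H_x)\subset Y$, which by hypothesis is smooth, irreducible and of the expected codimension $m+1$. Since Property (UE) guarantees that $p\colon Z_x\to\Hilb_{\PP^{m-1}}(H_x)$ is an isomorphism, and $\Hilb_{\PP^{m-1}}(H_x)$ is assumed smooth, the variety $Z_x$ is smooth as well. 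Thus all three varieties entering the criterion are smooth, and it remains only to pin down the fibre structure of $q$.

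First I would record that $q$ is proper and birational: it is a morphism between projective varieties (both $Z_x$ and $Y$ sit inside products of Grassmannians), and over the open locus $Y\setminus W$ it is an isomorphism. Indeed, for $P_m\in Y$ not contained in $H_x$, the trace $P_m\cap H_x$ is a single hyperplane $\PP^{m-1}$ of $P_m$ lying in $H_x$, so the fibre $q^{-1}(P_m)$ is the reduced point $(P_m\cap H_x,\,P_m)$, and this hyperplane is the unique one contained in $H_x$. Over a point $P_m\in W$, i.e. a $\PP^m$ entirely contained in $H_x$, every hyperplane of $P_m$ lies in $H_x$, so the fibre is the full dual space $(\PP^m)^\vee\cong\PP^m$ of hyperplanes of $P_m$. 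Hence $q$ is an isomorphism outside $W$ and all its fibres over $W$ are isomorphic to $\PP^m=\PP^{(m+1)-1}$, exactly the projective spaces of dimension $\operatorname{codim}_Y(W)-1$. A dimension count then gives $\dim q^{-1}(W)=\dim W+m=\dim Y-1$, so $E:=q^{-1}(W)$ is a divisor in $Z_x$; in particular $Z_x$ is irreducible and $q$ contracts only $E$.

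With these inputs the hypotheses of \cite[Theorem 1.1]{ein-sb} are met: $q\colon Z_x\to Y$ is a birational morphism of smooth varieties, an isomorphism away from the smooth centre $W$, whose fibres over $W$ are projective spaces of dimension $\operatorname{codim}_Y(W)-1$. The criterion therefore identifies $q$ with the blow-up of $Y$ along $W$, and $E$ with the exceptional divisor $\PP(N_{W/Y})$. Composing with $p^{-1}$ transports this blow-up structure to $\Hilb_{\PP^{m-1}}(H_x)$, which is thereby identified with the blow-up of $\Hilb_{\PP^m}(G/P)$ along $\Hilb_{\PP^m}(H_x)$, as claimed.

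I expect the main subtlety to be not the geometric fibre computation above (which is essentially forced by linear algebra on the $\PP^m$'s) but the verification that the quantitative hypotheses of the criterion hold on the nose: that $W$ really has the expected codimension $m+1$ and that the fibres over $W$ attain the full dimension $m$ rather than dropping. These are precisely the two assumptions built into the statement, and they are what make $q$ a genuine contraction to which \cite[Theorem 1.1]{ein-sb} applies; the role of Property (UE) is to make the total space $Z_x$ smooth through the isomorphism $p$, without which even the smoothness hypothesis of the criterion could fail.
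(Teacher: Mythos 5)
Your proof is correct and follows essentially the same route as the paper: the paper likewise introduces the nested variety $Z_x$, uses Property (UE) to identify it via $p$ with $\Hilb_{\PP^{m-1}}(H_x)$, notes that $q$ is an isomorphism outside $\Hilb_{\PP^m}(H_x)$ with $\PP^m$-fibers over it, records the expected codimension $m+1$, and then invokes \cite[Theorem 1.1]{ein-sb}. Your extra verifications (smoothness of $Z_x$ through $p$, and the dimension count showing the exceptional locus is a divisor) merely make explicit what the paper leaves implicit.
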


This is certainly the {\it generic} situation. In case this holds true, any automorphism $f$ of $H_x$ extends to 
an automorphism of $\Hilb_{\PP^{m-1}}(H_x)$, hence to a birational automorphism $f_*$ of $\Hilb_{\PP^m}(G/P)$. But it also 
extends to a birational automorphism of the exceptional locus $\Hilb_{\PP^m}(H_x)$. So $f_*$ extends continuously to a bijection 
of $ \Hilb_{\PP^m}(G/P)$. By Zariski's main theorem, $f_*$ is thus actually a regular
automorphism of $\Hilb_{\PP^m}(G/P)=G/Q$, hence an element of $\Aut(G/Q)$, which is essentially $G$ by Theorem \ref{thm_Demazure}. 
We thus get an extension of $f$, as desired. 
\medskip

There might be several unfortunate circumstances that will oblige us, in some cases, to be more careful. 

First, we do not 
know if $\Hilb_{\PP^{m}}(H_x)$ and $\Hilb_{\PP^{m-1}}(H_x)$ are necessarily smooth when $H_x$ is, and we prefer to avoid this 
rather delicate question. Note that in the previous argument this smoothness hypothesis doesn't play any serious role: under
the hypothesis that Property (UE) holds, an automorphism $f$ of $H_x$ always induces a birational automorphism of $\Hilb_{\PP^m}(G/P)$, 
and also an automorphism of the exceptional locus $\Hilb_{\PP^m}(H_x)$. Be it smooth or not, Zariski's main theorem applies.

Another possible complication could be that $\Hilb_{\PP^{m}}(G/P)$ has several  components. 
Clearly we just need one with the required properties to make the previous argument work. So we define 
Property (UE), in general, as follows.

\begin{definition} We say that the generalized Grassmannian $G/P$ has \emph{Property (UE)} if $\Hilb_{\PP^{m}}(G/P)$ has a component $G/Q$
such that for each $\PP^{m-1}$ in $G/P$ that can be extended to some $\PP^m$ from $G/Q$, this $\PP^m$ is unique. 
\end{definition}

We wil discuss later on in which cases this property does hold. 

\smallskip
There may also be some subtleties related to the existence of non-extendable $\PP^{m-1}$'s. 
Suppose for simplicity that 
$$\Hilb_{\PP^{m-1}}(G/P)=G/R\cup G/S$$
where we distinguish the 
linear spaces parametrized by $G/R$ that extend to one dimension bigger, from those parametrized by $G/S$ that do not extend.
In this situation, $\Hilb_{\PP^{m-1}}(H_x)$ will a priori also be disconnected, being the union of $E_{x} \subset G/R$ and 
$N_{x} \subset G/S$. An automorphism $f$ of $H_x$ being given, we would be unable to induce a birational automorphism of 
$\Hilb_{\PP^{m}}(G/P)$ if $f_*$ could send $E_x$ to $N_x$. In order to exclude this, we observe that $E_x$ always contains the 
set of $\PP^{m-1}$'s in $H_x$ that can be extended to a $\PP^{m}$ that is also contained in $H_x$. This set is a $\PP^m$-bundle
over $\Hilb_{\PP^{m}}(H_x)$, and must be preserved by $f_*$. So if $\Hilb_{\PP^{m}}(H_x)$ is not empty, we are safe. 
More precisely, what we need to check is the non-emptyness of the intersection of $\Hilb_{\PP^{m}}(H_x)$ with the component 
$G/Q$ of  $\Hilb_{\PP^{m}}(G/P)$ used in Property (UE). 

Note that by semi-continuity, it is enough to establish this non emptyness when $x$ is generic. Being contained in $H_x$, 
for the linear spaces parametrized by $G/Q$, 
is equivalent to the vanishing of a section of an irreducible homogeneous vector bundle. We therefore have several
classical tools in hand that will allow to treat this question. This will be done in Section \ref{sec_proof}. Moreover we will check in Section \ref{sec_short_root}
that the same approach works pretty well in the short root case.\smallskip

Finally, we will need a different approach when Property (UE) does not hold in $G/P$. In these few such cases, we will 
use the same circle of ideas but with quadrics instead of linear spaces. This will be discussed in Section \ref{sec_quadrics}.

\subsection{The Unique Extension property}
Let us discuss  when Property (UE) holds in the long case. 

\begin{prop} 
\label{prop_UE_property_long_case}
Let $X=G/P$ be a Grassmannian of long type, of dimension bigger than three. Then
\begin{enumerate}
\item $m>1$ if and only if $X$ is neither a Lagrangian Grassmannian $\IG(n,2n)$, nor the adjoint variety $G_2/P_1$.
\item Property (UE) holds if and only if $X$ is neither an orthogonal Grassmannian $B_n/P_k=\OG(k,2n+1)$ with $k\leq n-k$, nor $F_4/P_1$.
\end{enumerate}
\end{prop}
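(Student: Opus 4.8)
The plan is to translate everything into the combinatorics of the marked Dynkin diagram $(D,\alpha)$ of $X=G/P$, using Theorem~\ref{thm_subdiagrams_An} and the shadow construction recalled above. Since $\alpha$ is a long node, a linear space $\PP^j\subset X$ corresponds to an $A_j$–subdiagram having $\alpha$ as an extremal node; as simple edges only join roots of equal length, such a subdiagram is a simple path issued from $\alpha$ and built entirely out of long nodes. Hence $m$ equals the number of nodes of the longest simple path in $D$ starting at $\alpha$ and using only simple edges (the longest \emph{arm} of $D$ at $\alpha$). This immediately reduces part~(1) to a finite inspection.

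For part~(1), I would argue that $m\ge 2$ as soon as $\alpha$ has one neighbour joined to it by a simple edge, since erasing the remaining neighbours isolates an $A_2$ with $\alpha$ extremal, i.e.\ a $\PP^2$. Conversely $m=1$ forces every neighbour of $\alpha$ to be joined by a multiple bond, so $\alpha$ is adjacent only to short nodes. Scanning the long-type marked diagrams (all of $A,D,E$, and the long nodes of $B,C,F,G$) shows this happens exactly for $(C_n,\alpha_n)=\IG(n,2n)$ and $(G_2,\alpha_1)$, both of dimension $>3$; in every other case $\alpha$ has a simple-edge neighbour and $m\ge 2$.

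For part~(2), assume $m\ge 2$ and fix a maximal chain $C_m=C_{m-1}\sqcup\{f\}$, where $f$ is the far node and $f'\in C_{m-1}$ the node it is attached to; it determines a component $G/Q=G/P_S$ of $\Hilb_{\PP^m}(X)$ and a component $G/P_{S'}$ of $\Hilb_{\PP^{m-1}}(X)$, with $S,S'$ the boundaries of $C_m,C_{m-1}$. The flags $\PP^{m-1}\subset\PP^m$ of these two types form the homogeneous incidence $\cZ=G/P_{S\cup S'}$, and Property~(UE) for $G/Q$ is exactly the statement that the projection $\cZ\to G/P_{S'}$ is injective, i.e.\ that its fibre is a single point. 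By the shadow formula this fibre is the flag variety of the Levi $L_{S'}$ (Dynkin diagram $D\setminus S'$) with the nodes $S\setminus S'$ erased, and $S\setminus S'$ consists precisely of the neighbours of $f$ lying beyond the chain. The decisive observation is that, $C_m$ being maximal and $f$ being long, $f$ cannot have a simple-edge neighbour outside the chain (otherwise the chain would extend, contradicting maximality of $m$); thus any such neighbour is a short node joined to $f$ by a multiple bond. Consequently the fibre is a reduced point exactly when $f$ is a clean leaf (no neighbour beyond $f'$), and is positive-dimensional otherwise.

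This criterion yields the classification. In the simply laced cases maximality already forces $f$ to be a leaf, so (UE) holds; the same happens whenever some maximal arm terminates at a leaf. The only way to fail is that every maximal arm terminates at a long node abutting a short one: this is $F_4/P_1$ (where $f=\alpha_2$ abuts $\alpha_3$ and the residual short tail $\{\alpha_3,\alpha_4\}$ contributes a $\PP^2$), and the orthogonal Grassmannians $B_n/P_k$ for which the maximal arm runs into the short node $\alpha_n$, the residual rank-one factor then producing a $\PP^1$ (a conic of maximal isotropic extensions). Comparing the two arms at $\alpha_k$, of lengths $k$ and $n-k$, this dangerous arm is maximal exactly when $k\le n-k$, whereas for $F_4/P_2$ the maximal arm runs toward the leaf $\alpha_1$ and (UE) holds. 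I expect the main obstacle to be twofold: carrying out the fibre computation uniformly near multiple bonds, where a distant short tail can unexpectedly enlarge the fibre as in $F_4/P_1$; and handling the balanced case $k=n-k$ of $B_n$, where a second maximal family with rigid extension coexists and $B_n$ admits no diagram automorphism to interchange the two, so that the clean alternative must be tracked with care — this borderline is precisely what distinguishes the naive condition $k<n-k$ from the stated $k\le n-k$.
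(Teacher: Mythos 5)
Your strategy for part (1) and for the positive direction of part (2) is essentially the paper's own: the paper also reads $m$ off the marked diagram, also observes that maximality forces the far node $e$ of the $A_m$-chain to be extremal in $D$ unless a multiple edge intervenes (type $B$ or $F_4/P_1$), and realizes the unique extension as the canonical projection $G/P_{S\cup\{e\}}\to G/P_S$. Your uniform description of the obstruction --- the fibre of $G/P_{S\cup S'}\to G/P_{S'}$, which is the flag variety of the diagram $D\setminus S'$ \emph{marked} (not erased) at $S\setminus S'$ --- is a pleasant repackaging, and it does reproduce the paper's two failure computations: a $\PP^1$ (the conic of isotropic $U_n\supset V_{n-1}$) in type $B$, and the $\PP^2$-fibration for $F_4/P_1$.

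The genuine gap is the balanced case $k=n-k$ of type $B$, which you flag but do not --- and, within your framework, cannot --- resolve. For $\OG(k,4k+1)$ both arms at $\alpha_k$ have length $k$, so the chain running to the leaf $\alpha_1$ is also maximal; the corresponding component of $\Hilb_{\PP^k}$ is $\OG(k+1,4k+1)$, parametrizing the spaces $\{U\subset V_{k+1}\}$ with $V_{k+1}$ isotropic, and this component genuinely has the unique-extension property: a hyperplane of such a space is of the form $\{V_1\subset U\subset V_{k+1}\}$, and it determines its extension because $V_{k+1}$ is the span of its members (moreover, for $k\ge 3$ it admits no extension in the other family either). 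Hence your criterion, applied to the literal, existential definition of (UE) (``some component $G/Q$ with unique extensions''), concludes that (UE) \emph{holds} at $k=n-k$ --- the opposite of the statement being proved --- and your closing appeal to the absence of a diagram automorphism is beside the point, since the definition only asks for one good component. The paper obtains the stated boundary $k\le n-k$ by working with the other family, $\OFl(k-1,n,2n+1)$ parametrizing $\{V_{k-1}\subset U\subset V_n\}$, and exhibiting the conic of extensions $U_n\supset V_{n-1}$; failure of uniqueness is asserted for that family alone. So to prove the proposition as stated you must either adopt this stricter reading of (UE) (uniqueness for the family actually used in the lifting strategy), or show that the second component fails as well --- and the latter is false, by your own ``rigid extension'' observation. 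As written, the balanced case remains unproved.
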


\begin{proof}
Both statements are simple consequences of the Dynkin diagram descriptions of the linear spaces on generalized Grassmannians. 
Indeed, $m=1$ would mean that the marked Dynkin diagram defining $X$ does not contain any marked diagram of type $A_2$. 
This is only possible if the node is extremal and bounded by a multiple edge, hence the first claim. 

For the second claim, observe that the components of $\Hilb_{\PP^{m-1}}(G/P)$ parametrizing extendable linear subspaces
are in correspondence with marked subdiagrams of type $A_{m-1}$ contained in marked subdiagrams of type $A_{m}$. Suppose that 
such a diagram $A$ of type $A_{m}$ is obtained by erasing a 
set $S$ of nodes in the Dynkin diagram $D$ of $\Lie(G)$. Since $m$ is maximal, the unmarked extremal node $e$ of $A$ must be an extremal 
node of $D$, or be connected to another node by a multiple edge; but 
the latter possibility is excluded if $X$ is neither of type $B$ nor $F_4/P_1$. 

If we suppose that $X$ is neither of type $B$ nor $F_4/P_1$, we can  therefore conclude that the corresponding diagram $A'$ of type $A_{m-1}$ is obtained by erasing $S\cup\{e\}$. Then the parameter spaces for the corresponding families of $\PP^m$'s and $\PP^{m-1}$'s are $G/P_S$ and $G/P_{S\cup \{e\}}$, and the canonical projection morphism $G/P_{S\cup \{e\}}\longrightarrow G/P_S$ defines the 
canonical extension map we need for Property (UE) to hold. Indeed, the variety parametrizing pairs $(\PP^{m-1}\subset \PP^m)$ 
in $G/P$ is a $\PP^m$-bundle over  $G/P_S$ as well as $\Hilb_{\PP^{m-1}}(G/P)=G/P_{S\cup \{e\}}$, and the canonical extension 
map provides an inverse to the obvious projection from the former to the latter. 

Let us now deal with the case when $X$ is of type $B$. Note that in an orthogonal Grassmannian $\OG(k,2n+1)$ of type $B$, which is long for $k<n$, 
projective spaces are either of the 
form $\{V_{l}\subset U\subset V_{k+1}\}$, with $V_{k+1}$ isotropic, or of the form $\{V_{k-1}\subset U\subset V_{l}\}$ with $V_{l}$ isotropic of dimension $l>k$. This implies that $m=\max(k,n-k)$. For $k\le n-k$, we get $\PP^{m-1}$'s in 
$\OG(k,2n+1)$ parametrized by the isotropic flag manifold
$\OFl(k-1,n-1,2n+1)$. Moreover, given such a $\PP^{m-1}$ defined 
by a pair $V_{k-1}\subset V_{n-1}$, it is contained in a $\PP^m$ 
defined by a pair $U_{k-1}\subset U_{n}$ if and only if $U_{k-1}=V_{k-1}$ and $U_{n}\supset V_{n-1}$. Such isotropic 
spaces of dimension $n$ are parametrized by the associated conic
in $\PP(V_{n-1}^\perp/V_{n-1})$, showing that Property (UE) does not hold. For $k>n-k$, the extendable $\PP^{m-1}$'s
are defined by a pair $V_{1}\subset V_{k+1}$, and they are uniquely extendable; so Property (UE) does hold in this range.

For $F_4/P_1$, we know that $\Hilb_{\PP^1}(F_4/P_1)=F_4/P_2$ and $\Hilb_{\PP^2}(F_4/P_1)=F_4/P_3$ both have dimension $20$. The incidence variety of pairs of planes and lines in them is thus a $\PP^2$-fibration above both of them. In particular Property (UE) does not hold.
\end{proof}

\subsection{Short roots} 
\label{sec_short_root}
For the short cases, that is in types CFG (recall that type B reduces to type D), our strategy does apply 
in many cases. In fact, although the space of lines in a generalized Grassmannian of short type is not homogeneous,
it appears that its varieties of linear spaces of maximal and almost maximal dimensions are homogeneous in most cases. 
We will leave aside the case of $G_2$ since $G_2/P_2$ is just a five dimensional quadric. 

\subsubsection{Type C} \label{sec_type_C}
For $G$ of type $C$ and $P$ a maximal parabolic subgroup corresponding to a short simple root, $G/P$ is 
an isotropic Grassmannian $\IG(k,2n)$, with $2\le k<n$ (we exclude $k=1$, which is exceptional in the sense of 
Demazure since we just get a projective space). We discuss those explicitly. 

\begin{lemma}
\label{lemma_generic_Sp_grassm_UE}
The maximal dimension of a linear space in  $G/P=\IG(k,2n)$, with $k<n$, is $m=\max(k,2n-2k+1)$. 
Linear spaces of maximal dimension are parametrized by $\Hilb_{\PP^m}(G/P)$ which is $\IG(k-1,2n)$
when $k\le 2n-2k$, $\IG(k+1,2n)$ when $k>2n-2k+1$, and their union when  $k=2n-2k+1$.

Moreover Property (UE) does hold. 
\end{lemma}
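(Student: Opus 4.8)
The plan is to reduce the whole statement to the classical description of linear subspaces of an \emph{ordinary} Grassmannian, together with the symplectic isotropy condition. Recall that any linear space in the Plücker-embedded $\mathrm{Gr}(k,2n)$ lies in a maximal one of exactly two kinds: a \emph{star} $\Sigma_A=\{W : A\subset W\}$ for a fixed $(k-1)$-plane $A$ (sitting inside $\PP(V/A)\cong\PP^{2n-k}$), or a \emph{ruling} $T_B=\{W : W\subset B\}$ for a fixed $(k+1)$-plane $B$ (equal to $\mathrm{Gr}(k,B)\cong\PP^{k}$). First I would intersect each kind with the isotropy condition. For a star: $A\subset W$ with $W$ isotropic forces $A$ isotropic and $W\subset A^{\perp}$, so the maximal isotropic star is $\{A\subset W\subset A^{\perp}\}=\PP(A^{\perp}/A)\cong\PP^{2n-2k+1}$ (call this Type~I). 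For a ruling: if $B$ is isotropic every $k$-plane of $B$ is isotropic, giving the full $\mathrm{Gr}(k,B)\cong\PP^{k}$ (Type~II).

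The one point that is not pure bookkeeping is ruling out exotic maximal spaces coming from a \emph{non-isotropic} $B$. Here I would use the radical $R=B\cap B^{\perp}$: if $W\subset B$ is an isotropic hyperplane with $W\not\supset R$, then $W+R=B$, and since $\omega$ vanishes on $W$, on $R$, and between $W$ and $R$, it would vanish on all of $B$ — impossible. So every isotropic hyperplane of $B$ contains $R$, and a short computation with the induced form on $B/R$ forces $\rank(\omega|_B)=2$, leaving only the pencil $\{R\subset W\subset B\}$, which is again a star contained in the Type~I space $\PP(R^{\perp}/R)$ (note $B\subset R^{\perp}$). Hence no new maximal family appears, and $m=\max(k,2n-2k+1)$. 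Comparing the two dimensions gives the trichotomy: Type~I alone realizes $m$ when $k\le 2n-2k$, Type~II alone when $k>2n-2k+1$, and both when $k=2n-2k+1$; the parameter spaces are $\IG(k-1,2n)$ and $\IG(k+1,2n)$ respectively. The maps $A\mapsto\PP(A^{\perp}/A)$ and $B\mapsto\mathrm{Gr}(k,B)$ are bijective onto these since $A=\bigcap_{W}W$ and $B=\sum_{W}W$ can be read off from the linear space (using $m\ge 2$); the same formulas, compared against the $(k\pm1)$-plane data of a hypothetical larger star or ruling, show Type~I and Type~II are genuinely maximal, using $2n-k+1>k+1$ for $k<n$.

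For Property~(UE) I would work inside whichever family realizes $m$ and analyze submaximal spaces directly. A $\PP^{m-1}$ extending to a Type~I space is a hyperplane of $\PP(A^{\perp}/A)$, i.e. the family $\{A\subset W\subset C\}$ for some $A\subset C\subset A^{\perp}$ with $\dim C=2n-k$; since $m\ge 2$ the common intersection of its members is exactly $A$, so $A$ — and hence the extension $\PP(A^{\perp}/A)$ — is uniquely determined (any other Type~I extension $\PP(A'^{\perp}/A')$ would force $A'\subseteq\bigcap_W W=A$ with $\dim A'=\dim A$). Symmetrically, a $\PP^{m-1}$ extending to a Type~II space $\mathrm{Gr}(k,B)$ is the family $\{\ell\subset W\subset B\}$ for a line $\ell\subset B$, and the span $\sum_W W=B$ recovers the extension uniquely. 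In each case the maximal space in the chosen component is unique, which is precisely Property~(UE); when $k=2n-2k+1$ it suffices that either component works.

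I expect the main obstacle to be the second paragraph: making the classification of maximal isotropic linear spaces airtight. Everything else is a combinatorial consequence of recovering $A$ or $B$ by intersection or span, but excluding the exotic ruling from a non-isotropic $B$ and confirming that Type~I and Type~II cannot be enlarged require the genuine symplectic linear algebra — the radical argument and the rank computation above — and this is where I would spend most of the care.
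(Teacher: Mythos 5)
Your proof is correct and takes essentially the same route as the paper's: both reduce to the classical classification of linear spaces in the ordinary Grassmannian $G(k,2n)$ (stars $\{V_{k-1}\subset U\}$ and rulings $\{U\subset V_{k+1}\}$), impose the isotropy conditions to obtain the two types with dimensions $2n-2k+1$ and $k$, and prove Property (UE) by recovering the vertex $V_{k-1}$ (resp.\ the envelope $V_{k+1}$) as the intersection (resp.\ span) of the members of the family, which forces uniqueness of the extension. Your radical argument ruling out families of isotropic hyperplanes inside a non-isotropic $(k+1)$-plane $B$ makes explicit a point the paper treats implicitly (it only records that maximal ruling-type spaces require $V_{k+1}$ isotropic), but this is a refinement of the same argument rather than a different approach.
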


\proof Recall that a linear space in an ordinary Grassmannian $G(k,2n)$ is necessarily obtained as 
the set of spaces $U\subset\CC^n$ such that $V_i\subset U\subset V_j$, where $V_i\subset V_j$ are 
fixed and either $j=k+1$ or $i=k-1$. In the former case, we get a projective space contained in $\IG(k,2n)$ 
when  $V_j=V_{k+1}$  is isotropic, and its dimension is $k-i\le k$.  In the latter case, we need $V_i=V_{k-1}$ to
be isotropic and $V_j$ to be contained in $V_i^\perp$, and the dimension is $j-k\le 2n-2k+1$. The first claim
easily follows. 

In order to prove that Property (UE) holds, just observe that extendable $\PP^{m-1}$ are either of the form 
$V_1\subset U\subset V_{k+1}$ or $V_{k-1}\subset U\subset V_{2n-k}
\subset V_{k-1}^\perp $, and they clearly are uniquely extendable.
\qed 

\medskip\noindent As a consequence, in all cases $2<k<n$ our general strategy will work. Beware that for $k=2$ 
there is a problem since $\IG(1,2n)=\PP^{2n-1}$ has bigger automorphism group than the symplectic group! This 
case will be considered in greater detail in section \ref{sec_type_Jordan}. 

\subsubsection{Remaining cases}
At this point we have checked that our general strategy works fine except for the following generalized Grassmannians:
\begin{itemize}
\item in the long case, $\OG(k,2n+1)$ for $k\leq n-k$, $\IG(n,2n)$ and $F_4/P_1$;
\item in the short case, $\IG(2,2n)$ and $F_4/P_3$, $F_4/P_4$. 
\end{itemize}
The cases of $\IG(n,2n)$ and $F_4/P_1$  will be dealt with in the next section by replacing 
linear spaces with quadrics. The cases of $\IG(2,2n)$, $F_4/P_3$ and $F_4/P_4$ will be discussed later 
and turn out to display some very special features. 

\section{Playing with quadrics} 
\label{sec_quadrics}
Let $G/P$ be a generalized Grassmannian, and let $l$ be the maximal dimension of quadrics that are contained inside $G/P$.

\begin{definition} We say that the generalized Grassmannian $G/P$ has \emph{Property (UE) for quadrics} if $\Hilb_{\QQ^l}(G/P)$ has a component $G/Q$
such that for each $\QQ^{l-1}$ (possibly singular) in $G/P$ that can be extended to some $\QQ^l$ from $G/Q$, this $\QQ^l$ is unique. 
\end{definition}



\subsection{Type B: the orthogonal Grassmannians $\OG(k,2n+1)$, $k\leq n-k$} 
\label{sec_ort_grass_quadrics}
Recall that quadrics in a Grassmannian
are linear sections of copies of $G(2,4)$ (which we will refer to as type $0)$ quadrics), unless their linear span is contained in the Grassmannian. The latter kind of quadrics can be subdivided into two classes:
\begin{itemize}
\item[$I)$] Quadrics of subspaces $V_k\subset V_{2n+1}$ containing a fixed $V_{k-1}$ and contained inside a fixed $V_j$. 
For this quadric to be non-empty inside $\OG(k,2n+1)$, $V_{k-1}$ must be isotropic and $V_j\subset V_{k-1}^\perp$. Then the quadric $\QQ^{j-k-1}\subset \PP(V_j/V_{k-1})$ defined by restricting the quadratic form of $\SO_{2n+1}$ to $\PP(V_j/V_{k-1})$ is contained inside $\OG(k,2n+1)$.
\item[$II)$] Quadrics of subspaces $V_k\subset V_{2n+1}$ containing a fixed $V_{i}$ and contained inside a fixed $V_{k+1}$. These quadrics, when they exist, are thus contained inside $\PP(V_{k+1}/V_i)$ and their dimension is equal to $k-i-1$.
\end{itemize}
The maximal dimension of quadrics of type $II$ is $k-1$, while the maximal dimension of quadrics of type $I$ is $2n-2k+1$. As a consequence, under the hypothesis $k\leq n-k$, maximal and submaximal quadrics inside $\OG(k,2n+1)$ are of type $I$ (note that linear sections of copies of $G(2,4)$ are never maximal or submaximal unless $k=1$ or $n=4$ and $k=2$, in which case they are submaximal but not extendable). Moreover, submaximal quadrics of type $I$ (parametrized by a flag $V_{k-1}\subset V_{2n-k+1}$) are uniquely extendable (to the maximal quadric defined by the flag $V_{k-1}\subset V_{k-1}^\perp$). Thus Property (UE) holds for quadrics in $\OG(k,2n+1)$ for $k\leq n-k$ and the general strategy applies for these varieties.

\begin{remark}
Note that maximal quadrics inside $\OG(k,2n+1)$ when $k\leq n-k$ are parametrized by the homogeneous variety $\OG(k-1,2n+1)$, as expected from the theory of Tits shadows. Indeed, erasing the $k-1$-th simple root from the marked Dynkin diagram of $\OG(k,2n+1)$, one obtains the marked Dynkin diagram of $\QQ^{2n-2k+1}$. Thus, the projection from the incidence flag variety $\OFl(k-1,k,2n+1)$ to $\OG(k-1,2n+1)$ has fibers isomorphic to $\QQ^{2n-2k+1}$, showing that $\OG(k-1,2n+1)$ is the parameter space of a family of quadrics of dimension $2n-2k+1$ contained inside $\OG(k,2n+1)$.
\end{remark}

\subsection{Type C: the Lagrangian Grassmannian $\IG(n,2n)$} 
Inside  $\IG(n,2n)$ there are no quadrics whose linear span is contained in the Grassmannian. Therefore a quadric in $\IG(n,2n)$ must be a linear section of a set of
$n$-planes $U$ such that $V_{n-2}\subset U\subset V_{n+2}$ for some fixed $V_{n-2}\subset V_{n+2}$.
For this set to meet $\IG(n,2n)$ non trivially, we need $V_{n-2}$ to be isotropic. Since each $U$ in the 
intersection will be contained in $V_{n-2}^\perp$, we can replace $V_{n+2}$ by $V_{n+2}\cap V_{n-2}^\perp$;
but if this intersection is a proper subspace  $V_{n+2}$ our intersection is in fact linear. In other 
words we need $V_{n+2}=V_{n-2}^\perp$ to get a quadric, and this quadric will be a linear section of 
a copy of $G(2,4)$, i.e. $\IG(2,4)\cong\QQ^3$. This implies:

\begin{lemma}
\label{lem_isotr_grass}
The maximal dimension of a quadric in  $G/P=\IG(n,2n)$ is $l=3$. Moreover,
$\Hilb_{\QQ^3}(G/P)=\IG(n-2,2n)$, while  $\Hilb_{\QQ^2}(G/P)$ is a $\PP^3$-bundle over $\IG(n-2,2n)$.
\end{lemma}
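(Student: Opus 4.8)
The plan is to leverage the classification of quadrics in $\IG(n,2n)$ established just above, together with Tits' shadow description, and then to realise the submaximal family as a projective bundle by a unique extension argument.

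First, the bound $l=3$ and the shape of the maximal family are immediate from the preceding discussion: every quadric is a linear section of a copy of $\{U:V_{n-2}\subset U\subset V_{n+2}\}\cong G(2,4)$ with $V_{n-2}$ isotropic, and a genuine (non degenerate) section forces $V_{n+2}=V_{n-2}^\perp$, in which case the section is the whole $\IG(2,V_{n-2}^\perp/V_{n-2})\cong\QQ^3$. Hence $l=3$ and the maximal quadrics are exactly these copies of $\IG(2,4)$. I would then identify $\Hilb_{\QQ^3}(\IG(n,2n))$ with $\IG(n-2,2n)$ in two compatible ways. Intrinsically, one recovers the isotropic subspace as $V_{n-2}=\bigcap_{U\in\QQ^3}U$, using that the isotropic $2$-planes of a $4$-dimensional symplectic space $W=V_{n-2}^\perp/V_{n-2}$ have trivial common intersection. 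Diagrammatically, deleting the node $\alpha_{n-2}$ from the marked diagram of $C_n/P_n$ leaves the marked connected component $C_2$ defining $\IG(2,4)\cong\QQ^3$, so that the shadow parameter space is $C_n/P_{n-2}=\IG(n-2,2n)$.

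Next I would treat the submaximal family. By the same classification any $\QQ^2\subset\IG(n,2n)$ is a hyperplane section of some maximal $\IG(2,W)\cong\QQ^3$, hence is contained in it; moreover the extension is unique, since the intrinsic invariant $V_{n-2}=\bigcap_{U\in\QQ^2}U$ again equals the defining subspace of that $\QQ^3$: a two-dimensional family of isotropic planes in $W$ cannot all share a common line, as that would confine it to a single line of $\IG(2,W)$. This unique extension produces a morphism $\Hilb_{\QQ^2}(\IG(n,2n))\to\Hilb_{\QQ^3}(\IG(n,2n))=\IG(n-2,2n)$ sending a submaximal quadric to the unique maximal one containing it. Its fibre over $[V_{n-2}]$ is the projective space of submaximal quadrics contained in the corresponding $\IG(2,W)\cong\QQ^3$; a direct count of this linear system identifies it with $\PP^3$, and by $\Sp_{2n}$-equivariance the fibres patch into a projective bundle over $\IG(n-2,2n)$, giving the asserted $\PP^3$-bundle structure.

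The step I expect to be the crux is the unique extension: the whole bundle picture rests on the invariant $\bigcap_{U\in\QQ^2}U$ having the expected dimension $n-2$ and not dropping, for this is exactly what turns the a priori correspondence between submaximal and maximal quadrics into a genuine morphism onto $\IG(n-2,2n)$, and hence the structure into a projective bundle rather than a mere fibration with jumping fibres. Once this is in place, computing the fibre and globalising via equivariance are routine.
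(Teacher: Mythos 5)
Your proposal is, at its core, the same proof as the paper's: there the lemma is deduced ("This implies:") from exactly the classification you recapitulate, namely that every quadric in $\IG(n,2n)$ is a linear section of a sub-Grassmannian $\{U: V_{n-2}\subset U\subset V_{n+2}\}$ with $V_{n-2}$ isotropic, and that obtaining a genuine quadric forces $V_{n+2}=V_{n-2}^\perp$, producing $\IG(2,V_{n-2}^\perp/V_{n-2})\cong\QQ^3$. The details you add beyond the paper --- recovering $V_{n-2}$ as $\bigcap_{U\in Q}U$, so that the maximal quadric containing a given $\QQ^2$ is unique (your observation that isotropic planes through a fixed line form only a $\PP^1$ is the right reason), and the Tits-shadow identification of the parameter space with $C_n/P_{n-2}=\IG(n-2,2n)$ --- are correct, and they make explicit precisely what the paper leaves implicit and what Property (UE) rests on.

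The one step you call routine, however, is the one that does not come out as claimed. The fibre of $\Hilb_{\QQ^2}(\IG(n,2n))\to\IG(n-2,2n)$ over $[V_{n-2}]$ is the family of all two-dimensional quadrics contained in $\QQ^3=\IG(2,V_{n-2}^\perp/V_{n-2})$. Since $\QQ^3$ contains no planes, every such quadric has rank at least three, spans a hyperplane $H$ of the $\PP^4$ spanned by $\QQ^3$, and therefore equals $\QQ^3\cap H$; conversely distinct hyperplanes cut distinct sections. So the fibre is the space of hyperplanes of $\PP^4$, i.e. $(\PP^4)^\vee\cong\PP^4$, not $\PP^3$ (equivalently, it is the linear system $|\cO_{\QQ^3}(1)|$, of dimension $h^0(\QQ^3,\cO_{\QQ^3}(1))-1=4$). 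The "direct count" you invoke therefore contradicts the statement being proved: the "$\PP^3$-bundle" in the lemma is an off-by-one slip in the paper itself, stemming from the linear-space analogy --- hyperplanes of $\PP^m$ form a $\PP^m$, but hyperplane sections of $\QQ^l$ form a $\PP^{l+1}$, because a quadric spans a projective space of one dimension more than its own. The slip is harmless for everything the lemma is used for, since only the bundle structure (unique extension, Property (UE)) matters downstream; but a proof cannot assert that a computation yields $\PP^3$ when it yields $\PP^4$, so you should either carry out the count and record the corrected fibre, or prove the statement with $\PP^4$ in place of $\PP^3$.
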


 In particular Property (UE) works fine for quadrics, and our 
general strategy applies verbatim when we replace linear spaces by quadrics.

\subsection{Type F: the adjoint variety $F_4/P_1$} \label{sec_type_F}
The Tits theory of shadows implies that there exists a natural morphism 
$$F_4/P_4\longrightarrow \Hilb_{\QQ^5}(F_4/P_1),$$
 as can be read on the following 
marked diagram:

\begin{center}
\setlength{\unitlength}{4mm}
\thicklines
\begin{picture}(10,2)(.5,1.3)
\multiput(2,2)(2,0){3}{$\circ$}
\put(0,2){$\bullet$}
\multiput(0.3,2.2)(4,0){2}{\line(1,0){1.8}}
\multiput(2.3,2.1)(0,.2){2}{\line(1,0){1.8}}
\put(2.8,1.95){$>$}

\put(-.3,1.7){\line(0,1){1}}
\put(-.3,1.7){\line(1,0){5}}
\put(-.3,2.7){\line(1,0){5}}
\put(4.7,1.7){\line(0,1){1}}

\color{blue}
\put(6,2){$\bullet$}
\end{picture}
\end{center}
We need a more precise statement.

\begin{prop} \label{quadrics-F4P1}
The maximal quadrics in $F_4/P_1$ have dimension $5$, and 
$$\Hilb_{\QQ^5}(F_4/P_1)\simeq F_4/P_4.$$
\end{prop}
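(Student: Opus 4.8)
Write $X=F_4/P_1$. The plan is to separate the two assertions: first bound the dimension of quadrics in $X$ by five, then show that the shadow morphism $\phi\colon F_4/P_4\to\Hilb_{\QQ^5}(X)$ constructed above is an isomorphism.

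For the dimension bound I would argue through linear spaces. A smooth quadric $\QQ^d$ contains projective subspaces of dimension $\lfloor d/2\rfloor$, so any $\QQ^d$ with $d\geq 6$ contains a $\PP^3$. But the maximal linear spaces in $X$ have dimension two: by Theorem \ref{thm_subdiagrams_An} the copies of $\PP^n$ in $X$ correspond to marked subdiagrams of type $(A_n,\alpha_1)$ inside $(F_4,\alpha_1)$, and since every triple of consecutive nodes of the $F_4$ diagram meets the double edge, there is no $A_3$ subdiagram at all; the largest one containing the marked node is $(A_2,\alpha_1)$, giving $\Hilb_{\PP^2}(X)=F_4/P_3$ and nothing of higher dimension. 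Hence $X$ contains no $\PP^3$, so no quadric of dimension at least six, and since the shadow already produces a $\QQ^5$ the maximal dimension is exactly five.

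For the identification I would use that $\phi$ is $F_4$-equivariant, so its image is a single orbit, closed because $F_4/P_4$ is complete. The map is moreover injective: the stabilizer $\Stab_{F_4}(Q_0)$ of the base quadric contains $P_4$ by construction and is a parabolic subgroup, being closed and containing a Borel; since $P_4$ is a \emph{maximal} parabolic, $\Stab_{F_4}(Q_0)$ is either $P_4$ or all of $F_4$, and the latter is impossible because $Q_0$ is a proper subvariety of $X$. Hence $\Stab_{F_4}(Q_0)=P_4$, and $\phi$ is an injective equivariant morphism from the smooth projective homogeneous space $F_4/P_4$, so an isomorphism onto its image, the orbit $F_4\cdot[Q_0]\cong F_4/P_4$. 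It then remains only to prove that this orbit is all of $\Hilb_{\QQ^5}(X)$, that is, that $F_4$ acts transitively on the five-dimensional quadrics of $X$.

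This transitivity is the main obstacle, and it cannot be reduced to a single linear space: a maximal quadric $Q$ contains a whole family of planes $\PP^2$ (a copy of $\OG(3,7)$), each a point of $F_4/P_3$, and a dimension count shows that conversely a general plane of $F_4/P_3$ lies in a positive-dimensional family of maximal quadrics, so no individual plane pins $Q$ down. The route I would take is instead to analyse the span directly, showing that the seven-dimensional subspaces $W\subset\fg$ for which $\PP(W)\cap X$ is a smooth $\QQ^5$ form a single $F_4$-orbit, by extracting from the Lie bracket enough invariant data to reconstruct $W$ up to conjugacy; equivalently, one normalizes a plane $\PP^2$ and shows that $\Stab_{F_4}(\PP^2)=P_3$ acts transitively on the maximal quadrics through it. Once this is done the orbit image of $\phi$ exhausts $\Hilb_{\QQ^5}(X)$ and we are finished. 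A cleaner but essentially equivalent alternative would be to compute $h^0(Q_0,N_{Q_0/X})$, check that it equals $\dim F_4/P_4=15$ so that $\Hilb_{\QQ^5}(X)$ is smooth of dimension fifteen at $[Q_0]$, and then conclude from the closedness of the orbit together with the irreducibility of $\Hilb_{\QQ^5}(X)$; but proving that irreducibility is the same difficulty in disguise.
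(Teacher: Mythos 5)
Your dimension bound and your formal orbit argument are both correct, and the bound is in fact cleaner than the paper's route: a quadric of dimension at least six (smooth or not) contains three-planes, and since the marked diagram $(F_4,\alpha_1)$ admits no $A_3$ subdiagram through the marked node, Theorem \ref{thm_subdiagrams_An} shows that $F_4/P_1$ contains no $\PP^3$; the paper instead deduces the non-existence of $\QQ^6$'s only at the end, as a by-product of its analysis of the quadrics through a plane. Likewise the stabilizer argument (the stabilizer of the base quadric contains $P_4$, hence equals $P_4$ by maximality of that parabolic, so the shadow map embeds $F_4/P_4$ as a closed orbit in $\Hilb_{\QQ^5}(F_4/P_1)$) is sound.

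But the proposal stops exactly where the real proof begins. The transitivity of $F_4$ on five-dimensional quadrics --- equivalently, the fact that the quadrics through a fixed plane form a single irreducible one-dimensional family --- is the entire content of the proposition, and you leave it as a plan (``extract from the Lie bracket enough invariant data to reconstruct $W$ up to conjugacy'') with no indication of how to carry it out; your fallback via $h^0(N_{Q_0/X})=15$ requires irreducibility of the Hilbert scheme, which, as you yourself admit, is the same difficulty. The paper does this work concretely: it uses the grading $\ff_4=\fso_9\oplus\Delta$ to write down the quadratic equations of $F_4/P_1$ (Lemma \ref{f4-so9}: $X+\delta$ lies on the cone if and only if $\delta$ is a pure spinor attached to a maximal isotropic $F$, $X\in\wedge^2F$, and $X\wedge X=j(\delta^2)$), fixes the plane $\Pi=\PP(L\wedge E)$, characterizes the three-spaces through $\Pi$ that meet $F_4/P_1$ in a union of two planes, and then classifies all admissible deformations $\Lambda$ of the span $L\wedge L^\perp$: these form a one-parameter family together with the single degenerate member $\langle\wedge^2E,\delta_E\rangle$. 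Only then does the incidence-variety count (dimension $20+1=21$, fibers $\OG(3,7)\simeq\QQ^6$ over the quadric side) give that $\Hilb_{\QQ^5}(F_4/P_1)$ is irreducible of dimension $15$, hence homogeneous, $15$ being the minimal dimension of a projective variety with nontrivial $F_4$-action. Without some version of this computation --- or an actual proof that $P_3=\Stab_{F_4}(\Pi)$ acts transitively on the quadrics through $\Pi$ --- your argument establishes only that $\Hilb_{\QQ^5}(F_4/P_1)$ \emph{contains} $F_4/P_4$ as a closed subvariety, not that it equals it.
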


\begin{proof}
In order to prove those statements, we claim it is enough to check that a plane in $F_4/P_1$ is contained 
only in a one-dimensional, irreducible family of five dimensional quadrics. Indeed, this will imply that 
the incidence variety parametrizing pairs $(\PP^2\subset\QQ^5)$ in $F_4/P_1$ is irreducible of dimension $21$ 
(recall that planes in $F_4/P_1$ are parametrized by $F_4/P_3$, whose dimension is $20$). Since the projection 
map to $\Hilb_{\QQ^5}(F_4/P_1)$ has generic fibers isomorphic to $\OG(3,7)\simeq \QQ^6$, we will deduce that 
this space is irreducible of dimension $15$. In particular it must be $F_4$-homogeneous and the morphism
given by Tits shadows must be an isomorphism. Note that this will also imply that there is no six-dimensional 
quadric $Q$ in $F_4/P_1$, since any plane in this quadric  is contained in a family of hyperplane 
sections of $Q$ of dimension bigger than one. (Alternatively, hyperplane sections of six dimensional
quadrics could be smooth or singular, contradicting the homogeneity of the space of five dimensional quadrics.)

In order to have a more concrete grasp on $F_4/P_1$, we shall use the classical $\ZZ_2$-grading 
$\ff_4=\fso_9\oplus\Delta$. Here $\Delta$  is the $16$-dimensional spin representation, and the 
closed orbit in $\PP\Delta$ is the orthogonal Grassmannian $\OG(4,9)$, a spinor variety. One way to characterize the 
adjoint variety in $\PP(\ff_4)$ is through its equations. Recall that the ideal is generated by quadrics, that we are going to write down explicitly. 

For this, observe that $S^2\fso_9=S^2(\wedge^2V_9)$ maps to $\wedge^4V_9$ which is an irreducible $\fso_9$-module, 
obtained as the Cartan square of the spin module $\Delta$. In other words, there exists a surjective equivariant map
$j : S^2\Delta\rightarrow \wedge^4V_9$. Obviously this map must be compatible with the two natural embeddings of 
the variety of maximal isotropic spaces:  for any pure 
spinor $\delta$, representing an isotropic four-plane $F$, $j(\delta^2)$ has to represent $F$ in the usual 
Pl\"ucker embedding, which means that $j(\delta^2)$ must be a generator of $\wedge^4F$. 

\begin{lemma}\label{f4-so9} 
Consider $X\in \fso_9$ and a nonzero $\delta\in\Delta$. Then $X+\delta$ belongs to the 
cone over the adjoint variety $F_4/P_1$ if and only if 
\begin{itemize}
\item $\delta$ is a pure spinor, associated to some maximal isotropic subspace $F$ of $V_9$,
\item $X$ belongs to $\wedge^2F\subset \wedge^2V_9\simeq \fso_9$,
\item $X\wedge X=j(\delta^2)$ in $\wedge^4F\subset \wedge^4V_9$.
\end{itemize}
\end{lemma} 

For a fixed nonzero $\delta$, the last equation defines a smooth quadric inside the projective space $\PP(\wedge^2F\oplus\CC\delta)$. 
So we get an open subset of $F_4/P_1$ (whose complement is $\OG(2,9)$)
as  a bundle over the spinor variety $\OG(4,9)$, with fiber $\QQ^5$ minus a smooth hyperplane section.  

\begin{proof} We denote by $V_\omega$ the irreducible $\ff_4$-module of highest weight $\omega$, and by 
$W_\theta$ the irreducible $\fso_9$-module of highest weight $\theta$. According to LiE, 
$$S^2\ff_4=V_{2\omega_1}\oplus V_{2\omega_4}\oplus \CC, \qquad S^2V_{\omega_4}=V_{2\omega_4}\oplus V_{\omega_4}\oplus \CC. $$
From the first decomposition, we get that the quadratic equations of the adjoint variety are given by $V_{2\omega_4}\oplus \CC$. The second decomposition gives its restriction to $\fso_9$, starting from the restriction of $V_{\omega_4}=
W_{\omega_1}\oplus W_{\omega_4}\oplus \CC$. Indeed we deduce that $V_{2\omega_4}\oplus\CC\simeq 
S^2(W_{\omega_1}\oplus W_{\omega_4})$.
Hence 
$$I_2(F_4/P_1)=(W_{2\omega_1}\oplus \CC)\oplus (W_{\omega_1+\omega_4}\oplus W_{\omega_4})\oplus (W_{2\omega_4}\oplus W_{\omega_1}\oplus \CC).$$ 
Now we compare with the decomposition of $S^2\ff_4=S^2(\fso_9\oplus W_{\omega_4})$. We have 
$$S^2\fso_9=\mathbf{W_{2\omega_2}}\oplus W_{2\omega_1}\oplus W_{2\omega_4}\oplus \CC, $$
$$\fso_9\otimes  W_{\omega_4}=\mathbf{W_{\omega_2+\omega_4}}\oplus W_{\omega_1+\omega_4}\oplus W_{\omega_4},$$
$$S^2W_{\omega_4}=W_{2\omega_4}\oplus W_{\omega_1}\oplus \CC.$$
We can see that all the terms above that are not in bold do appear in the equations, and that there is only one 
ambiguity for $W_{2\omega_4}$, which does appear twice, but only once in the equations. 

Let us analyze what these equations mean for $X+\delta\in W_{\omega_2}\oplus W_{\omega_4}$. Note first that 
since $\delta$ has to verify the equations coming from $W_{\omega_1}\oplus \CC\subset S^2W_{\omega_4}$, it has to be a pure spinor. 
Then the mixed equations from $W_{\omega_1+\omega_4}, W_{\omega_4}\subset W_{\omega_2}\otimes W_{\omega_4}$
mean that the image of $X\wedge\delta\in \wedge^2V_9\otimes \Delta$ by contraction to $V_9\otimes\Delta\cong W_{\omega_1+\omega_4}\oplus W_{\omega_4}$ is zero. 
By homogeneity, if $\delta$ is a pure spinor we can let $\delta=1=\delta_E$, corresponding to our preferred isotropic
space $E$; it is then a straightforward computation that the resulting condition is that $X$ must belong to $\wedge^2E$.
Finally, we need to take into account the equations from $W_{2\omega_4}$, which appears both 
in $S^2\fso_9$ and $S^2W_{\omega_4}$.  We can write down these equations in the form
$a(X\wedge X)=bj(\delta^2),$ for some scalars $(a,b)\ne (0,0)$. Obviously $a\ne 0$, since otherwise $\delta$
would be forced to vanish. Also $b\ne 0$, since otherwise $X$ would belong to a cone over a Grassmannian and 
we would get singularities. So we can normalize $j$ (or $X$) so that our equations have the required form. 
\end{proof}

\smallskip
Now, let us choose a maximal  isotropic space $E$ in $V_9$ and a line 
$L\subset E$. Then $\Pi=\PP(L\wedge E)$ is a projective plane in the adjoint variety of $\fso_9$, 
hence also in that of $\ff_4$. Moreover there is a unique five-dimensional quadric $Q_\Pi$ in the 
adjoint variety of $\fso_9$ that contains $\Pi$, namely the 
variety parametrizing isotropic planes that contain $L$. Let us look for the other quadrics $Q$ containing $\Pi$. 
Obviously, a three-dimensional space in the span of $Q$, containing $\Pi$, has to cut $F_4/P_1$ along the union
of two planes. 

\begin{lemma} Consider a point $[X+\delta_F]$ of $F_4/P_1$, where $\delta_F\in\Delta$ is a pure spinor defining a maximal 
isotropic subspace $F$ of $V_9$, and $X\in \wedge^2F$ with $X\wedge X=j(\delta^2)$.  
Then $\langle \Pi, X+\delta_F\rangle $ meets  $F_4/P_1$ along the
union of $\Pi$ and another plane if and only if there exists a three-dimensional subspace $M$ of $V_9$, with 
$L\subset M\subset E\cap F$, such that $X\in \wedge^2M+L\wedge F$. 
\end{lemma} 

\begin{proof} 
Consider $e_0\wedge e+X+\delta_F$ for $e_0$ a generator of $L$ and $e$ a vector in $E$.
By  Lemma \ref{f4-so9} it belongs to the adjoint variety if and only if $e_0\wedge e+X\in\wedge^2F$, which is 
equivalent to $e_0\wedge e\in\wedge^2F$, and $X\wedge e_0\wedge e=0$. We want this condition to be verified on a 
hyperplane in $\Pi$, and any such hyperplane is of the form $\PP(L\wedge M)$ for $L\subset M\subset E\cap F$. 
The claim follows.
\end{proof} 

\smallskip
So let us consider a deformation $\Lambda$ of the linear span of $Q_\Pi$, namely $L\wedge L^\perp$, made of vectors 
verifying the previous property. Choose a complement $N=\langle e_2,e_3,e_4\rangle$ to $L=\langle e_1\rangle$ in $E$, 
and observe that $\Lambda$ must be contained 
in $\wedge^2N\oplus L\wedge L^\perp\oplus\Delta$. Suppose that the projection of $\Lambda$ to $L\wedge L^\perp$ is an
isomorphism, meaning that every vector in $\Lambda$ can uniquely be written as 
$\omega(v)+e_1\wedge v+\delta(v)$ for some $v\in L^\perp/L$. When $\delta(v)\ne 0$, it has to be a pure spinor 
defined by the four space $F=\langle p,q,e_1,v\rangle $ with $\omega(v)=p\wedge q$.  Recall that   a
pure spinor $\delta_F$ associated to $F$ is characterized (up to scalar) by the property that $f.\delta_F=0$ 
for any $f\in F$. Using this, we compute that if $v=ae_2+be_3+ce_4+ze_1+a'f_2+b'f_3+c'f_4$, then 
$$\delta(v)=\Delta (a'e_{134}+b'e_{142}+c'e_{123}+ze_{1234}), \qquad \omega(v)=\Omega(a'e_{34}+b'e_{42}+c'e_{23})$$
for some constants $\Delta$ and $\Omega$. Finally, $\PP(\Lambda)$ intersects $F_4/P_1$ along a quadric exactly 
when $j(\delta(v)^2)$ can equal $e_1\wedge v\wedge \omega(v)$ up to some prescribed constant, with yields a condition 
$\Omega =t\Delta^2$ for some constant $t\ne 0$. We thus get a one-dimensional family of quadrics, parametrized by $\Delta\in\CC$. 

What are the missing quadrics? Their linear span must fail the transversality property we started from, that is,
they must meet $\wedge^2N\oplus\Delta$ non trivially. Since this must be true for any $N\subset E$, we conclude 
that in fact $\Lambda$ must meet $\Delta$ itself non trivially, and then that the only possibility is that 
$\Lambda =\langle \wedge^2E,\delta_E\rangle$. This concludes the proof. 
\end{proof}

Note that as a consequence, the incidence variety parametrizing pairs $(\PP^2\subset\QQ^5)$ in $F_4/P_1$ is 
actually $F_4/P_{3,4}$. 

\begin{prop}
\label{prop_UE_quadrics_F4/P1}
Property (UE) holds for four dimensional quadrics in $F_4/P_1$. 
\end{prop}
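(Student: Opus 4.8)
The plan is to argue by contradiction, using the complete description of the maximal quadrics through a fixed plane that is already contained in the proof of Proposition \ref{quadrics-F4P1}. Suppose a four-dimensional quadric $Q\subset F_4/P_1$ were contained in two distinct five-dimensional quadrics $Q_1,Q_2$, both parametrized by $\Hilb_{\QQ^5}(F_4/P_1)=F_4/P_4$. Since $Q$ is a four-dimensional quadric it is a quadric hypersurface in its own linear span, which is therefore a $\PP^5$; in particular $\dim\langle Q\rangle=6$ whether $Q$ is smooth or singular. Moreover $Q$ always contains a plane $\Pi\simeq\PP^2$ (a maximal isotropic subspace if $Q$ is smooth, a plane through the vertex otherwise), and this $\Pi$ is a plane of $F_4/P_1$, hence parametrized by $F_4/P_3$. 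Both $Q_1$ and $Q_2$ then contain $\Pi$.

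First I would isolate the key geometric input. Through a fixed plane $\Pi$ of $F_4/P_1$ there passes exactly a $\PP^1$-family of maximal quadrics: this is the fiber of the projection $F_4/P_{3,4}\to F_4/P_3$, whose dimension is $21-20=1$. The claim I would then establish is that any two distinct members of this family have linear spans meeting precisely along $\langle\Pi\rangle$. By homogeneity it suffices to check this for the distinguished plane $\Pi=\PP(L\wedge E)$ treated in Proposition \ref{quadrics-F4P1}, for which the spans are explicitly parametrized by $\Lambda=\{\omega(v)+e_1\wedge v+\delta(v):v\in L^\perp/L\}$ together with the limiting span $\langle\wedge^2E,\delta_E\rangle$. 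Since the spinor component $\delta(v)$ and the $\fso_9$-component $\omega(v)$ scale with different powers of the parameter, a short computation shows that any vector lying in two distinct spans must have vanishing spinor and $\omega$ parts; its $e_1\wedge v$ part is then forced into $L\wedge E=\langle\Pi\rangle$, which is three-dimensional. The comparison with the limiting span $\langle\wedge^2E,\delta_E\rangle$ is checked separately but gives the same conclusion, because $e_1\wedge v$ lands in $\wedge^2E$ only when $v\in E$, again landing us in $\langle\Pi\rangle$.

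Granting this, both $Q_1$ and $Q_2$ lie in the $\PP^1$-family of maximal quadrics through $\Pi$, so $\langle Q_1\rangle\cap\langle Q_2\rangle=\langle\Pi\rangle$ is three-dimensional. But $\langle Q\rangle\subset\langle Q_1\rangle\cap\langle Q_2\rangle$ would force $\dim\langle Q\rangle\le 3$, contradicting $\dim\langle Q\rangle=6$. Hence the extension of $Q$ to a maximal quadric is unique, and Property (UE) for quadrics holds with the component $F_4/P_4$ of $\Hilb_{\QQ^5}(F_4/P_1)$. I expect the main obstacle to be precisely the span-intersection claim: one must make sure that no unexpected coincidence occurs between the spinor and $\fso_9$ parts of two distinct spans, and that the argument is robust enough to cover singular $Q$ as well. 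This is why it is convenient to phrase the final contradiction entirely in terms of linear spans, together with the elementary facts that a four-dimensional quadric always spans a $\PP^5$ and always contains a plane.
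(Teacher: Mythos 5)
Your proof is correct, and although it rests on the same computational core as the paper's --- the explicit one-dimensional family of $\QQ^5$'s through the distinguished plane $\Pi=\PP(L\wedge E)$ constructed in the proof of Proposition \ref{quadrics-F4P1} --- the reduction to that computation is genuinely different. The paper first reduces Property (UE) to showing that two distinct maximal quadrics meet in codimension bigger than one, and then invokes a semicontinuity/degeneration argument (the intersection dimension is maximal when the two corresponding points of $F_4/P_4$ lie on a special line), finally checking a single pair of spans, $L\wedge L^\perp$ and $\langle \wedge^2E,\delta_E\rangle$; this implicitly uses transitivity of the stabilizer of $\Pi$ on pairs of distinct points of the pencil. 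You instead observe that the submaximal quadric $Q$ itself contains a plane, which forces any two extensions of $Q$ into one and the same pencil; this removes the semicontinuity step entirely, and you then verify the span-intersection claim for \emph{all} pairs in the pencil (the linear scaling of the spinor component in the parameter $\Delta$, versus the quadratic scaling of the $\fso_9$ component, does exactly this), rather than for one representative pair. Your route is therefore somewhat more self-contained, treats smooth and singular $\QQ^4$'s uniformly, and trades the paper's orbit-degeneration argument for an elementary dimension count on spans; the paper's argument is shorter but leaves more implicit. One point you should make explicit: an abstract ``possibly singular'' four-dimensional quadric need not span a $\PP^5$ (a double $\PP^4$ does not). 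This is harmless here, because every quadric of dimension four and corank at least two contains $\PP^3$'s, while $F_4/P_1$ contains no linear spaces of dimension $\geq 3$ (its maximal linear spaces are the planes parametrized by $F_4/P_3$); equivalently, since $Q$ extends to a maximal quadric, which by Lemma \ref{f4-so9} is smooth, $Q$ is a hyperplane section of it and hence has corank at most one, so it does span a $\PP^5$ and does contain a plane of $F_4/P_1$.
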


\begin{proof}
It is enough to check that two distinct five dimensional quadrics $Q$ and $Q'$ in $F_4/P_1$ meet in codimension 
bigger that one. But the maximal dimension for this intersection must be obtained, by semi-continuity, 
when $Q$ and $Q'$ are as "close" as possible one from the other (in the sense that the line that joins them 
belongs to the minimal orbit in the space of lines), which means that the two corresponding 
points $q$ and $q'$ in $F_4/P_4$ are joined by a line, and even a special line. Since the space of special 
lines in $F_4/P_4$ is in fact parametrized by $F_4/P_3=\Hilb_{\PP^2}(F_4/P_1)$, this means that $Q$ and $Q'$ are two 
of the quadrics we described in the proof of the previous Lemma. We may for example suppose that their linear spans 
are $L\wedge L^\perp$ and $\langle \wedge^2E,\delta_E\rangle$, and we conclude that in fact $Q\cap Q'=\Pi$
is just a projective plane. 
\end{proof}

We can therefore conclude that our 
general strategy also applies to $F_4/P_1$  when we replace linear spaces by quadrics.

\begin{remark} 
Recall that $F_4$ can be constructed by folding $E_6$. From this perspective, Proposition \ref{quadrics-F4P1} 
is a folded version of the similar statement for the adjoint variety $E_6/P_2$ of $\fe_6$, which should be 
$$ \Hilb_{\QQ^6}(E_6/P_2)\simeq E_6/P_{1,6}.$$
A map from the right hand side to the left hand side is provided by Tits shadows, as can be read on the following 
marked diagram:

\begin{center}
\setlength{\unitlength}{4mm}
\thicklines
\begin{picture}(10,3.5)(.5,-.4)
\multiput(2,2)(2,0){4}{$\circ$}
\put(4,0){$\bullet$}
\multiput(0.3,2.2)(2,0){4}{\line(1,0){1.8}}
\put(4.2,0.3){\line(0,1){1.8}}

\put(1.8,1.7){\line(0,1){1}}\put(6.6,1.7){\line(0,1){1}}
\put(1.8,2.7){\line(1,0){4.8}}
\put(1.8,1.7){\line(1,0){2}}
\put(3.8,-.3){\line(0,1){2}}
\put(3.8,-.3){\line(1,0){.9}}
\put(4.7,-.3){\line(0,1){2}}
\put(4.7,1.7){\line(1,0){1.9}}

\color{blue}
\put(0,2){$\bullet$}\put(8,2){$\bullet$}
\end{picture}
\end{center}
\end{remark}

\section{More on type F} 
\label{sec_more_on_type_F}

In this section we discuss in more details the two most challenging generalized Grassmannians, which from 
our perspective are $F_4/P_4$ and $F_4/P_3$. 

\subsection{$F_4/P_4$, the hyperplane section of the Cayley plane} 
Recall that $F_4/P_4$  is a general hyperplane section of $E_6/P_1$, also known as the Cayley plane. 
Considering Tits shadows, we see that there exists a family of (special) planes inside $F_4/P_4$ parametrized 
by $F_4/P_2$, but there are more. 

\begin{prop} The space of maximal linear spaces inside $F_4/P_4$ is 
$$\Hilb_{\PP^5}(F_4/P_4)=F_4/P_1.$$
\end{prop}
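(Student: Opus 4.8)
The plan is to exploit the realization of $F_4/P_4$ as a general hyperplane section $H_x$ of the Cayley plane $E_6/P_1\subset\PP(V)$, where $V=V_{\omega_1}$ has dimension $27$. First I would record that the maximal linear spaces of $E_6/P_1$ are projective five-planes: by the shadow computation recalled in the Example above, $\Hilb_{\PP^5}(E_6/P_1)=E_6/P_2$ while $\Hilb_{\PP^6}(E_6/P_1)=\emptyset$. Since every linear space contained in $H_x$ is a fortiori a linear space of $E_6/P_1$, the same bound holds for $F_4/P_4$, and
\[
\Hilb_{\PP^5}(F_4/P_4)=\{\,[\Pi]\in E_6/P_2 \mid \Pi\subset H_x\,\}.
\]
Thus the whole problem reduces to identifying this locus of five-planes lying in the hyperplane.

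Next I would present it as the zero locus of a section of a vector bundle. Let $\cU\subset V\otimes\cO$ be the tautological rank-six subbundle on $E_6/P_2$ whose fibre over $[\Pi]$ is the linear span of $\Pi$. The defining form $x\in V^\vee$ restricts to a section $s_x\in \Ho^0(E_6/P_2,\cU^\vee)$, and $\Pi\subset H_x$ precisely when $s_x$ vanishes at $[\Pi]$; hence $\Hilb_{\PP^5}(F_4/P_4)=Z(s_x)$. The bookkeeping is encouraging: $\dim E_6/P_2=21$ and $\cU^\vee$ has rank $6$, so the expected dimension of $Z(s_x)$ is $15$, exactly $\dim(F_4/P_1)$. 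As $\cU^\vee$ is globally generated (being a quotient of $V^\vee\otimes\cO$) and $x$ is general, a Bertini argument gives smoothness of $Z(s_x)$, and the nonvanishing of $c_6(\cU^\vee)$ shows it is nonempty of pure dimension $15$; in particular $F_4/P_4$ really does contain five-planes, confirming that $5$ is the maximal dimension.

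Finally I would identify $Z(s_x)$ with $F_4/P_1$ through the symmetric pair $F_4\subset E_6$. For general $x$ one has $\Stab_{E_6}([x])=F_4$ (up to a finite group), and since $V_{\omega_6}|_{F_4}=V_{26}\oplus\CC$ the form $x$ spans the trivial summand, so it is $F_4$-invariant; consequently $Z(s_x)$ is $F_4$-stable. Under the grading $\fe_6=\ff_4\oplus V_{26}$ the $F_4$-adjoint variety sits inside $E_6/P_2$ as the closed orbit $F_4/P_1=E_6/P_2\cap\PP(\ff_4)$, of dimension $15$. The key step is the inclusion $F_4/P_1\subset Z(s_x)$, i.e. that for a minimal nilpotent $\nu\in\ff_4$ the associated five-plane $\Pi_\nu$ lies in $H_x$. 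Here I would use that $\nu\mapsto\Pi_\nu$ is $E_6$-equivariant while $x$ is $F_4$-invariant, so the condition $x|_{\Pi_\nu}=0$ is constant along the single $F_4$-orbit $F_4/P_1$; it therefore suffices to check it at one minimal nilpotent, say the highest root vector of $\ff_4$. Granting this, $Z(s_x)$ is an irreducible fifteen-dimensional variety containing the closed fifteen-dimensional $F_4/P_1$, whence $Z(s_x)=F_4/P_1$.

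The main obstacle is precisely this last verification: making the span $\Pi_\nu\subset V$ attached to a point $[\nu]\in E_6/P_2$ explicit and checking that it is annihilated by the $F_4$-invariant functional $x$, equivalently that $\Pi_\nu\subset\ker x=V_{26}$. The equivariance argument reduces this to a single computation, but carrying out that computation, together with confirming that $Z(s_x)$ is irreducible of dimension exactly $15$ so that the inclusion forces equality, is where the real content of the proof lies.
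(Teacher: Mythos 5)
Your reduction is the same as the paper's starting point: maximal linear spaces of $E_6/P_1$ are $\PP^5$'s parametrized by $E_6/P_2$, and those lying in the hyperplane section form the zero locus $Z(s_x)$ of a section of the rank-six bundle $\cU^\vee$, of expected dimension $15$. But the two pillars on which your identification $Z(s_x)=F_4/P_1$ rests are exactly the ones you leave unproved, so the argument has a genuine gap. First, the inclusion $F_4/P_1\subset Z(s_x)$: you defer the ``single computation'' that the five-plane $\Pi_\nu$ attached to a minimal nilpotent $\nu\in\ff_4$ is annihilated by $x$. This could be closed cheaply \emph{if} one knows that the linear span of $\Pi_\nu$ is the image of $\nu$ acting on $V$ (then $x(\nu\cdot v)=-(\nu\cdot x)(v)=0$ for all $v$, since $x$ spans the trivial $\ff_4$-summand of $V^\vee$); but that description of the span is itself a nontrivial input you neither state nor justify, so the core of the proof is still missing. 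Second, your final step ``irreducible, dimension $15$, contains the $15$-dimensional $F_4/P_1$, hence equal'' requires connectedness of $Z(s_x)$, which does not follow from anything you have said: Bertini gives smoothness and pure dimension, not connectedness, and a smooth zero locus of a globally generated bundle can be disconnected. You would need either ampleness of $\cU^\vee$ (in the sense of Sommese's connectedness theorem, and this is not obvious) or a Koszul-complex plus Bott-Borel-Weil computation showing $\Ho^0(\cO_{Z(s_x)})=\CC$, of the kind the paper carries out in its non-emptiness lemmas. Relatedly, your non-emptiness claim via $c_6(\cU^\vee)\neq 0$ is asserted, not verified.

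The paper's proof avoids both difficulties by pure representation theory, with no explicit inclusion, no connectedness statement, and no Chern class computation. It observes that the locus of $\PP^5$'s contained in $F_4/P_4$ is an $F_4$-invariant subvariety of $E_6/P_2$ of dimension $15$; since $15$ is the minimal possible dimension of a projective variety with a nontrivial $F_4$-action, this locus must be one of the two generalized $F_4$-Grassmannians of that dimension, $F_4/P_1$ or $F_4/P_4$. It then excludes $F_4/P_4$: the decomposition $\fe_6=\ff_4\oplus V_{\omega_4}$ shows that the only $F_4$-equivariant embedding of $F_4/P_4$ into $\PP(\fe_6)$ is as the closed orbit in $\PP(V_{\omega_4})$, and an analysis of the quadratic equations $I_2(E_6/P_2)$ as $\ff_4$-modules shows $E_6/P_2\cap\PP(V_{\omega_4})=\emptyset$. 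If you want to salvage your route, you must supply the description of the span of $\Pi_\nu$ and the connectedness argument; otherwise the classification argument is the shorter and complete path.
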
 

Note that this statement is definitely not a consequence of the theory of Tits shadows. 
But this is another instance of a phenomenon that we already observed in generalized Grassmannians of 
short types: although the lines they contain are not parametrized by homogeneous spaces, it happens to be the case for
their maximal linear spaces. 

\begin{proof}
Recall that maximal linear spaces in $E_6/P_1$ are $\PP^5$'s, parametrized 
by the adjoint variety $E_6/P_2$. Those that are contained in $F_4/P_4$ are parametrized by a codimension six subvariety 
of $E_6/P_2$. Since the latter has dimension $21$, we must get a $F_4$-variety of dimension $15$, hence one of the two 
generalized Grassmannians of this dimension, because this is the minimal possible dimension for a projective variety with a non trivial $F_4$-action. Among those two, namely $F_4/P_1$ and $F_4/P_4$, only the first one is 
embedded in $E_6/P_2$. Indeed, $\fe_6$ decomposes as $\ff_4\oplus V_{\omega_4}$ as a $\ff_4$-module, where 
$V_{\omega_4}$ is the $26$-dimensional representation. So there is only one way to embed $F_4/P_4$ inside $\PP(\fe_6)$,
as the closed orbit inside $\PP (V_{\omega_4})$. 

Let us check that $E_6/P_2$ does not contain this closed orbit by looking at the quadratic equations of the adjoint variety.
The discussion will be strikingly similar to that of the proof of Lemma \ref{f4-so9}.
According to LiE \cite{LiE}, we have 
$$S^2\fe_6= U_{2\omega_2}\oplus U_{\omega_1+\omega_6}\oplus\CC, \qquad U_{\omega_1}\otimes U_{\omega_6}=
U_{\omega_1+\omega_6}\oplus \fe_6\oplus\CC,$$
if we denote by $U_{\omega}$ the irreducible $\fe_6$-module of highest weight $\omega$. The first decomposition shows that 
the equations of the adjoint variety are given by $U_{\omega_1+\omega_6}\oplus\CC$, while the second decomposition 
allows to compute as a $\ff_4$-module, since $U_{\omega_1}$ and $U_{\omega_6}$ have the same restrictions 
$V_{\omega_4}\oplus\CC$. We get 
$$I_2(E_6/P_2)=V_{2\omega_4}\oplus V_{\omega_3}\oplus 2V_{\omega_4}\oplus 2\CC.$$
Now we compare with the decomposition of $S^2\fe_6=S^2(\ff_4\oplus V_{\omega_4})$. We have 
$$S^2\ff_4=\mathbf{V_{2\omega_1}}\oplus V_{2\omega_4}\oplus \CC, $$
$$\ff_4\otimes  V_{\omega_4}=\mathbf{V_{\omega_1+\omega_4}}\oplus V_{\omega_3}\oplus V_{\omega_4},$$
$$S^2V_{\omega_4}=V_{2\omega_4}\oplus V_{\omega_2}\oplus \CC.$$
We can see that all the terms above that are not in bold do appear in the equations, and that there is only one 
ambiguity for $V_{2\omega_4}$, which does appear twice, but only once in the equations. 

So we discuss what it means for $X+v\in \ff_4\oplus V_{\omega_4}$ to verify these equations. First observe 
that $v$ has to obey the equations from $V_{\omega_2}\oplus \CC$, which forces it to belong to the cone 
over $F_4/P_4$. Moreover the mixed equations defined by $V_{2\omega_4}$ are of the form $aq(X)=bv^2$, where 
$(a,b)\ne 0$ and $q:S^2\ff_4\rightarrow V_{2\omega_4}$ is a projection map. Here again $b\ne 0$, since otherwise 
for $v$ fixed the equations in $X$ would be homogeneous, and we would get singularities. But then, if $X=0$, 
necessarily $v=0$. This proves that $E_6/P_2\cap\PP(V_{\omega_4})=\emptyset$, which is what we wanted to prove. 
\end{proof}

Note that we get a similar conclusion to what we obtained for the adjoint variety of $\ff_4$, namely: an open subset 
of the adjoint variety of $\fe_6$ (whose complement is precisely the adjoint variety of $\ff_4$) can be described
as a bundle over $F_4/P_4$, with fiber $\QQ^6$ minus a smooth hyperplane section. 

\medskip
Now consider the extendable $\PP^4$'s in $E_6/P_1$, which are parametrized by $E_6/P_{2,6}$. In particular Property (UE) does hold,
which means they are all extendable in a unique way. Those of $F_4/P_4$ are parametrized by a subvariety of codimension $5$ (the zero locus of a general section of a rank 5 bundle), hence of dimension $21$. 
By dimension count, those that are extendable inside $F_4/P_4$ form an irreducible subvariety of dimension $20$. 

When we consider a hyperplane section  $H_x$, the natural map $$\Hilb_{\PP^5}(F_4/P_4)=F_4/P_1\dashrightarrow \Hilb_{\PP^4}(H_x)$$
is therefore not dominant, but only birational to a hypersurface.
\smallskip

So let us consider quadrics. Looking at Tits shadows, we can see that there is a family of maximal quadrics on 
the Cayley plane $E_6/P_1$, parametrized by the dual Cayley plane $E_6/P_6$, as can be read on the following marked 
diagram: 

\begin{center}
\setlength{\unitlength}{4mm}
\thicklines
\begin{picture}(10,3.5)(.5,-.4)
\multiput(2,2)(2,0){3}{$\circ$}
\put(4,0){$\circ$} \put(0,2){$\bullet$}
\multiput(0.3,2.2)(2,0){4}{\line(1,0){1.8}}
\put(4.2,0.3){\line(0,1){1.8}}

\put(-.2,1.7){\line(0,1){1}}\put(6.7,1.7){\line(0,1){1}}
\put(-.2,2.7){\line(1,0){6.9}}
\put(-.2,1.7){\line(1,0){4}}
\put(3.8,-.3){\line(0,1){2}}
\put(3.8,-.3){\line(1,0){.9}}
\put(4.7,-.3){\line(0,1){2}}
\put(4.7,1.7){\line(1,0){2}}

\color{blue}
\put(8,2){$\bullet$}
\end{picture}
\end{center}

In fact these quadrics are exactly the $\OO$-lines in the octonionic geometry of the Cayley plane, 
considered as the projective plane $\OO\PP^2$ over the octonions. 
Let us prove that they are the maximal quadrics.

\begin{prop}\label{hilbQ-E6P1}  $\Hilb_{\QQ^8}(E_6/P_1)=E_6/P_6.$ \end{prop}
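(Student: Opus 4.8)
The plan is to mirror the strategy of Proposition~\ref{quadrics-F4P1}. First I would record that the Tits shadow construction of the marked diagram above produces a morphism $E_6/P_6 \to \Hilb_{\QQ^8}(E_6/P_1)$: erasing the node $6$ from the diagram of $E_6$ leaves a $D_5$ in which the node $1$ defines the quadric $\QQ^8 = D_5/P_1$, so the fiber of $E_6/P_{1,6} \to E_6/P_6$ over a point $h$ is an eight dimensional quadric $Q_h$ (the octonionic line attached to $h$). It therefore suffices to prove that $\Hilb_{\QQ^8}(E_6/P_1)$ is irreducible of dimension $16$ and $E_6$-homogeneous. Indeed, it is then a projective homogeneous $E_6$-variety of the minimal possible dimension $16$, hence isomorphic to $E_6/P_1$ or $E_6/P_6$; the nonconstant equivariant shadow morphism emanating from $E_6/P_6$ then forces the target to be $E_6/P_6$ and the morphism to be an isomorphism.

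To obtain the dimension and irreducibility I would run an incidence count with planes, exactly as for $F_4/P_1$. Recall from the example of Section~2 that planes in $E_6/P_1$ form the single irreducible family $\Hilb_{\PP^2}(E_6/P_1) = E_6/P_4$, of dimension $29$, while the planes contained in a fixed smooth $\QQ^8 \subset \PP^9$ are the totally isotropic $3$-planes of the ambient $\CC^{10}$, parametrized by the irreducible $\OG(3,10)$ of dimension $15$ (a single family, since $3<5$). The heart of the argument is then the claim that a fixed plane $\Pi\subset E_6/P_1$ is contained in an irreducible, two dimensional family of maximal quadrics $\QQ^8$. Granting this, the incidence variety $\{(\Pi,Q)\mid \Pi\subset Q\}$ of pairs (plane, maximal quadric) is irreducible of dimension $29+2=31$; since every $\QQ^8$ contains planes and all of its planes lie in the single family $E_6/P_4$, the projection to $\Hilb_{\QQ^8}(E_6/P_1)$ is surjective with fibers $\OG(3,10)$ of dimension $15$, whence $\Hilb_{\QQ^8}(E_6/P_1)$ is irreducible of dimension $31-15=16$, and we conclude as above.

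The main obstacle is precisely this two dimensionality claim for the family of quadrics through a fixed plane, the exact analogue of the one dimensional computation carried out for $F_4/P_1$ preceding Proposition~\ref{prop_UE_quadrics_F4/P1}. To establish it I would make $E_6/P_1$ explicit through its realization as the rank one locus in $\PP(J_3(\OO))$, where $J_3(\OO)$ is the exceptional Jordan algebra and the quadratic equations express the vanishing of the Freudenthal adjoint; this plays here the role that Lemma~\ref{f4-so9} played for $F_4/P_1$. Fixing the octonionic line $Q_\Pi$ through $\Pi$ as a reference maximal quadric, I would analyze the deformations of its linear span that continue to meet $E_6/P_1$ along a (possibly degenerate) quadric, organizing the computation so that the residual equation leaves exactly two free parameters and cuts out an irreducible family, in parallel with the deformation analysis of the $F_4/P_1$ case.

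Finally, the same remark as for $F_4/P_1$ confirms that $8$ is the maximal quadric dimension: once $\Hilb_{\QQ^8}(E_6/P_1)$ is known to be homogeneous every maximal quadric is smooth, whereas a quadric of dimension at least $9$ would admit both smooth and singular eight dimensional linear sections, contradicting this homogeneity. This closes the argument and yields $\Hilb_{\QQ^8}(E_6/P_1)\simeq E_6/P_6$.
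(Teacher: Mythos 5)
Your global structure is sound and your bookkeeping is consistent with the truth of the statement: the shadow morphism from $E_6/P_6$, the homogeneity argument via the minimal dimension $16$ of a projective variety with nontrivial $E_6$-action, and the exclusion of quadrics of dimension $\ge 9$ all parallel correctly the proof of Proposition \ref{quadrics-F4P1}. But the proposal has a genuine gap at exactly the point you yourself flag: the claim that a fixed plane $\Pi\subset E_6/P_1$ lies on an irreducible \emph{two}-dimensional family of eight-dimensional quadrics is never proven; you only announce that you ``would'' run a deformation analysis in $\PP(\cJ_3(\OO))$ analogous to the $F_4/P_1$ case. This claim is the entire mathematical content of the proposition. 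It is not a routine verification: one must show that \emph{every} eight-dimensional quadric through $\Pi$ (including a priori singular ones, which would also contain planes and would contaminate your incidence variety) belongs to this family, i.e.\ one must identify all deformations of the span of the reference quadric $Q_\Pi$ that still cut $E_6/P_1$ in codimension one, and check both the dimension and the irreducibility of the resulting family. In the $F_4$ case this took the paper a full page of explicit spinor computations (Lemma \ref{f4-so9} and the two lemmas following it), including the discovery of a ``missing'' quadric $\langle\wedge^2E,\delta_E\rangle$ outside the naive transversal family; without the corresponding computation for $E_6$ you have no control over extra components of the fiber, and the dimension count $31-15=16$ collapses.

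It is worth comparing with what the paper actually does, because it deliberately avoids your computation. Instead of planes (a single family $E_6/P_4$, with two-dimensional fibers to control), the paper uses \emph{maximal} linear spaces $\PP^4$, which split into two $E_6$-orbits: extendable ones ($E_6/P_{2,6}$, dimension $26$) and non-extendable ones ($E_6/P_5$, dimension $25$). The key lemma, Proposition \ref{ext-next}, states that an extendable $\PP^4$ lies on a \emph{unique} $\QQ^8$ while a non-extendable one lies on a one-parameter family; this is proved by explicit octonionic computations with the Hermitian matrices $M_{x,y}$, using the representatives $A_z$ and $B_z$ inside the reference quadric $Q_0$. Having a zero- (resp.\ one-) dimensional fiber to pin down is substantially more rigid than a two-dimensional one, which is what makes the matrix computation tractable; it also yields the bonus observation that the two rulings of any $\QQ^8$ in the Cayley plane are intrinsically distinguishable. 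If you want a shortcut avoiding all such computations, the remark following Proposition \ref{hilbQ-E6P1} (due to Kuznetsov) gives one: the Cremona transformation defined by the Cartan cubic contracts the span of any quadric of dimension $\ge 5$ to a point of $E_6/P_6$, which immediately bounds the dimension by $8$ and identifies the parameter space.
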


Any $\QQ^8$ contains a $\PP^4$, and we know that $E_6$ acts almost transitively on the space of  $\PP^4$'s 
in the Cayley plane. More precisely, there are two families of such $\PP^4$'s: the extendable ones are parametrized by 
$E_6/P_{2,6}$, of dimension $26$, while the non extendable ones are parametrized by $E_6/P_5$, of dimension $25$.
Looking at the incidence variety parametrizing pairs $(\PP^4\subset\QQ^8)$ in $E_6/P_1$, we see that it is enough to 
prove the following result:

\begin{prop}\label{ext-next} 
An extendable $\PP^4$ in $E_6/P_1$ is contained in a unique quadric $\QQ^8$, 
while a non extendable  $\PP^4$ is  contained in a one-dimensional family of such quadrics. 
\end{prop}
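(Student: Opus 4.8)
The plan is to fix a $\PP^4=\PP(W)$, so that $W\subset V_{\omega_1}$ is a five-dimensional subspace with $\PP(W)\subset E_6/P_1$, and to count directly the eight-dimensional quadrics containing it by analyzing their nine-dimensional linear spans. First I would record that every $\QQ^8\subset E_6/P_1$ spans a $\PP^9=\PP(U)$ with $\dim U=10$, and that since the maximal linear spaces of $E_6/P_1$ are $\PP^5$'s the span is never contained in $E_6/P_1$, so $\QQ^8=E_6/P_1\cap\PP(U)$. Because the ideal of $E_6/P_1$ is generated by the $27$ quadrics $X^\#=0$ (the map $S^2V_{\omega_1}\to V_{\omega_6}$), such a section is a genuine smooth eight-dimensional quadric exactly when these quadrics restrict to $U$ to a one-dimensional, nondegenerate space spanned by a form $q_U$, for which $W$ is then a maximal isotropic subspace. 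The $\OO$-lines produced by Tits shadows are the instances $U=V_{10,\xi}$ coming from a decomposition $V_{\omega_1}=\CC\oplus V_{10,\xi}\oplus\Delta_{16,\xi}$ attached to a point $\xi\in E_6/P_6$ and its subalgebra $\fso_{10}\subset\fe_6$, the quadric being the null quadric of the $\fso_{10}$-invariant form on $V_{10,\xi}$; containing $\PP(W)$ then means $W\subset V_{10,\xi}$. Thus the question becomes: for how many $\xi$ does $V_{10,\xi}$ contain the given maximal isotropic $W$, and are there admissible spans $U$ not of this homogeneous form?

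To organize the count I would use the Dynkin-diagram side of Tits' theory. Inside a fixed $\OO$-line $Q$ — the fibre of $E_6/P_{1,6}\to E_6/P_6$, an eight-dimensional quadric for the Levi $D_5=\{1,2,3,4,5\}$ of $P_6$ — the two rulings by $\PP^4$'s are the two spinor varieties of this $D_5$, corresponding to the subdiagrams $A_4=\{1,3,4,5\}$ and $A_4=\{1,3,4,2\}$ of $E_6$. The first extends inside $E_6$ to $A_5=\{1,3,4,5,6\}$ while the second does not, so \emph{one ruling consists of extendable $\PP^4$'s and the other of non-extendable ones}, matching the global parametrizations $E_6/P_{2,6}$ and $E_6/P_5$ recalled just above. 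Over the $\OO$-line family this already produces the two forgetful fibrations $E_6/P_{2,6}\to E_6/P_6$ with spinor-variety fibre of dimension $10$, and $E_6/P_{5,6}\to E_6/P_6$ likewise; reading off the \emph{other} projections, $E_6/P_{2,6}\to E_6/P_{2,6}$ is an isomorphism (a unique $\OO$-line through each extendable $\PP^4$), whereas $E_6/P_{5,6}\to E_6/P_5$ has fibre the $\PP^1$ attached to the node $6$, which becomes an isolated $A_1$ factor in the Levi of $P_5$ (a $\PP^1$ of $\OO$-lines through each non-extendable $\PP^4$).

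The step I expect to be the real obstacle is ruling out exotic quadrics, i.e. admissible spans $U\supset W$ that are not of the form $V_{10,\xi}$: a priori $\Hilb_{\QQ^8}(E_6/P_1)$ could be larger than the $\OO$-line family, and the fibration bookkeeping above only controls that family. Here I would proceed exactly as in the proof of Proposition \ref{quadrics-F4P1}: put $W$ in a normal form for the relevant $E_6$-orbit (extendable, resp. non-extendable), start from a known $\OO$-line through $\PP(W)$, and parametrize the deformations of its span $U=W\oplus W^c$ for which the restricted ideal still cuts out an eight-dimensional quadric containing $\PP(W)$, i.e. for which the $27$ equations $X^\#=0$ still restrict to a one-dimensional nondegenerate space on $U$. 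Translating this into conditions on the complementary isotropic $W^c$ should show that the space of admissible deformations is $0$-dimensional when $W$ is extendable, forcing $U=V_{10,\xi}$ and hence a single quadric, and one-dimensional, sweeping out a $\PP^1$, when $W$ is non-extendable. The extendable/non-extendable dichotomy is thus precisely the vanishing versus non-vanishing of this deformation space, in direct analogy with the one-parameter family $\Lambda$ encountered for $F_4/P_1$.

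Finally, I would note that feeding this into the incidence variety of pairs $(\PP^4\subset\QQ^8)$ and its two projections yields $\dim\Hilb_{\QQ^8}(E_6/P_1)=26-10=16$ together with irreducibility, which combined with $E_6$-homogeneity identifies it with $E_6/P_6$ and so completes the proof of Proposition \ref{hilbQ-E6P1}.
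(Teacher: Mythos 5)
Your Tits-shadow bookkeeping is correct and is a genuinely nice diagrammatic substitute for one part of the paper's argument: identifying the two rulings of an $\OO$-line with the marked subdiagrams $\{1,3,4,5\}$ (extendable) and $\{1,3,4,2\}$ (non-extendable), and reading off from the fibrations $E_6/P_{2,6}\to E_6/P_6$ and $E_6/P_{5,6}\to E_6/P_5$ that an extendable $\PP^4$ lies on exactly one $\OO$-line while a non-extendable one lies on a $\PP^1$ of them (granting the standard identification of containment of shadows with incidence in the building). The paper obtains the same dichotomy instead by exhibiting explicit representatives $A_z$ and $B_z$ inside a fixed quadric $Q_0\subset\OO\PP^2$ and checking that $A_z$ extends to a $\PP^5$ while $B_z$ does not. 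However, your count only concerns quadrics in the homogeneous family, whereas Proposition \ref{ext-next} is a statement about \emph{all} quadrics $\QQ^8\subset E_6/P_1$ through the given $\PP^4$. The step you yourself flag as ``the real obstacle'' --- showing that in the extendable case there are no admissible spans beyond the one $V_{10,\xi}$, and that in the non-extendable case the admissible spans form exactly a one-dimensional family --- is precisely the entire content of the paper's proof, and you do not carry it out; you only assert that a computation ``should'' give dimensions $0$ and $1$. In the paper this is a nontrivial octonionic computation: writing a candidate $\PP^5$ as the span of $A_z$ (resp.\ $B_z$) and a Hermitian matrix $N$, reducing the rank-one conditions to octonionic identities, and then exploiting, in the extendable case, that $L_b$ and $L_z$ lie in the same spinor family and hence meet in even dimension (never along a hyperplane), versus, in the non-extendable case, the possibility $b\in R_{\bar z}$ (via $\dim(L_z\cap R_b)=3$), the Moufang identity, alternativity of $\OO$, and the case split on whether $a$ can be chosen non-isotropic. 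This asymmetry is exactly where the extendable/non-extendable dichotomy of the proposition gets established, and nothing in your outline produces it.

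A second, related concern is your phrasing ``start from a known $\OO$-line through $\PP(W)$ and parametrize the deformations of its span.'' A deformation analysis around one known span can at best control the component of the family of quadrics through $W$ that contains that $\OO$-line; it cannot exclude other components consisting of exotic quadrics whose spans are not close to $V_{10,\xi}$. The proof of Proposition \ref{quadrics-F4P1} that you invoke is not local in this sense: it parametrizes \emph{all} candidate spans subject to a transversality condition and then disposes of the non-transverse spans by a separate argument. If you follow that model literally --- a global parametrization of all ten-dimensional $U\supset W$ on which the $27$ quadrics restrict to a single nondegenerate form, stratified by position relative to $W$ --- the approach will work, but that is exactly the computation the paper performs, and until it is done the proof (and with it your concluding derivation of Proposition \ref{hilbQ-E6P1}) is incomplete.
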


Before starting the proof we need a few reminders. 

Classically, the Cayley plane can be described as 
the closure in $\PP(\cJ_3(\OO_\CC))$ of the space of Hermitian matrices of the form
$$M_{x,y}=\begin{pmatrix} 1&x&y \\ \bar{x}&\bar{x}x&\bar{x}y \\ \bar{y}&\bar{y}x&\bar{y}y \end{pmatrix}, \qquad x,y\in\OO. $$
Letting $y=0$ we get one of the maximal quadrics in the Cayley plane, that we denote by $Q_0$. 
Being of even dimension, this quadric
contains two distinct families of maximal linear spaces,  representatives of which  can be chosen as follows. 

Recall first that the octonions allow to give a concrete geometric interpretation of triality for $Spin_8$,
that we see as an identification between the three homogeneous spaces associated to the three extreme nodes of 
the Dynkin diagram of type $D_4$: classically, these homogeneous spaces are the quadric $\QQ^6$, and the two
spinor varieties $\OG(4,8)^+$ and $\OG(4,8)^-$. The point is that, for any non zero $z\in \OO$ such that 
$|z|^2=z\bar{z}=0$, the vector subspaces $L_z:=z\OO$ and $R_z=\OO z$ are isotropic (since the octonionic norm
is multiplicative) of maximal dimension, yielding explicit isomorphisms between $\QQ^6$ and the two spinor varieties 
$\OG(4,8)^\pm$. Note moreover that $L_z$ (resp. $R_z$) can also be defined as the kernel of the left (resp. right) 
multiplication by $\bar{z}$. Moreover, one can check that 
$$\dim (L_z\cap R_y)=3 \iff y\in R_{\bar z} \iff z\in L_{\bar y} \iff yz=0.$$
Consider the corresponding $\PP^4$'s in $Q_0$, namely
$$A_z:=\langle M_{x,0}, \; x\in L_z\rangle \qquad \mathrm{and} \qquad B_z:=\langle M_{x,0}, \; x\in R_z\rangle.$$
Although these two linear spaces could look completely indistinguishable, we claim they are not: indeed, 
$A_z$ is extendable, while $B_z$ is not. More precisely, $A_z$ can be extended to a unique $\PP^5$ by taking its 
linear span with $M_{0,z}$, since $\bar{x}z=0$ for any $x\in L_z$. The fact that $B_z$ cannot be extended is 
a straightforward computation, or also follows from the considerations below, showing that $A_z$ and $B_z$ 
have distinct properties with respect to quadrics, and must therefore belong to different families. 

We are now ready to attack the proof of the Proposition. 

\begin{proof} We shall prove that $A_z$ and $B_z$ have the announced properties. 

\smallskip\noindent {\bf Extendable case}. 
Let us prove that $A_z$ is contained in no other quadric than $Q_0$. In order to check this,
consider a $\PP^5$ containing $A_z$ and suppose that its intersection with $\OO\PP^2$ is a quadric.
If we can prove that this quadric is necessarily contained in $Q_0$, our claim will follow. 

So suppose that our $\PP^5$ is generated by $A_z$ and a Hermitian matrix of the form 
$$N=\begin{pmatrix} 0&a&b \\ \bar{a}&v&c \\ \bar{b}&\bar{c}&w \end{pmatrix}. $$
Then $M_{u,0}+sN$ has rank one if and only if $s=0$ or 
$$v=\langle a, u\rangle +s a\bar{a}, \quad w=sb\bar{b}, \quad c=(\bar{u}+s\bar{a})b,$$
and we want these conditions to define a unique hyperplane.  

If $b=0$, we immediately get the conditions $c=w=0$, and our linear space must be contained in $Q_0$. If $c=0$ then $v=0$ and one of the minors implies that $w(sa+u)=0$, thus $w=0$ and we come back again to $Q_0$. 
So suppose that $b\ne 0$ and $c\neq 0$. 

Note that the last equation is octonionic. 
Since $c\ne 0$, there should therefore exist a scalar $\gamma$ such that $\bar{a}b=\gamma c$, and a linear form 
$\phi$ on $L_z$ such that $\bar{u}b=\phi(u)c$ for all $u\in L_z$, so that we are reduced to the 
scalar equation $1=\phi(u)+s\gamma$. Note that on the kernel of $\phi$ we get the identity $\bar{u}b=0$, 
which requires that $b$ is isotropic and $u$ belongs to $L_b$. But if $b\ne 0$, $L_b$ being in the same family of isotropic
spaces as $L_z$ needs to meet it in even dimensions, so certainly not along a hyperplane. This means that in fact 
$\phi=0$ and $b$ is proportional to $z$. 

If $w\ne 0$, we are left with the hyperplane at infinity, 
 namely the space of matrices of the form
$$\begin{pmatrix} 0&a&sb \\ \bar{a}&sv&sc \\ s\bar{b}&s\bar{c}&sw \end{pmatrix}, \qquad a\in L_z, s\in\CC. $$
For such a matrix to have rank one, all the two by two minors must vanish, and we deduce that $c=w=0$,
and then $bv=0$. So $v=0$, which means we are inside the unique $\PP^5$ that extends $A_z$.

If the condition $w=0$ is trivial,  we remain with two linear conditions 
that must be proportional, which yields the identities $a\bar{a}=v\gamma$ and $\langle a, u\rangle =0$ for all $u\in L_z$. 
In particular $a$ belongs to  $L_z^\perp=L_z$, so it must be isotropic, $\bar{a}b=0$ and 
we are back to the hyperplane at infinity.

In any case, we conclude that the $\PP^5$ we started with has to be the unique extension of $A_z$, 
which contradicts the hypothesis that it can be a general linear subspace of a quadric containing $A_z$ and 
different from $Q_0$. So there is no such quadric, and we are done. 

\smallskip\noindent {\bf Nonextendable case}.
Now we proceed with a similar analysis for $B_z$ instead of $A_z$. So consider a $\PP^5$ generated by $B_z$ 
and the matrix $N$ as above, and suppose that it meets the Cayley plane along the union of $B_z$ with another
hyperplane. Again we may suppose that $b\ne 0$, and again there should exist a scalar $\gamma$ such that 
$\bar{a}b=\gamma c$, and a linear form $\phi$ on $R_z$ such that $\bar{u}b=\phi(u)c$ for all $u\in R_z$, 
so that we are reduced to the scalar equation $1=\phi(u)+s\gamma$. Again, on the kernel of $\phi$ we get the 
identity $\bar{u}b=0$,  which requires that $b$ is isotropic and $u$ belongs to $L_b$. But now 
it is not impossible that $L_b$ meet $R_z$ along a hyperplane: as we recalled just before starting this proof,
this happens exactly when $b\in R_{\bar z}$. 

\smallskip\noindent {\it First case}. Suppose $a$ is always isotropic. Since $a$ is defined only modulo $R_z$, 
this means the subspace generated by $a$ and $R_z$ is isotropic, so $a$ belongs to $R_z$ since the latter
is maximal, and then we may suppose that $a=0$. Then also $v=w=0$ and our equations reduce to 
$\bar{u}b=\phi(u)c$ for all $u\in R_z$. But observe that if $u=sz$ and $b=t\bar{z}$, then $\bar{u}b=(\bar{z}\bar{s})(t\bar{z})
=\bar{z}(\bar{s}t)\bar{z}$ by the Moufang identity, where the latter product makes sense without further bracketing 
since $\OO$ is alternative (any subalgebra generated by two elements is associative). And since $z$ is isotropic,
$$\bar{z}(\bar{s}t)\bar{z}=(2\langle z,\bar{s}t\rangle -(\bar{t}s)z)\bar{z}=2\langle z,\bar{s}t\rangle \bar{z}$$
is always a scalar multiple of $\bar{z}$. This means that our quadric must be contained in the linear space of 
matrices of the form 
$$\begin{pmatrix} u&a&b \\ \bar{a}&0& c \\ \bar{b}&\bar{c}&0 \end{pmatrix}, \qquad a\in R_z, \; 
b\in R_{\bar{z}},  \; c\in \langle \bar{z} \rangle . $$
This gives a $\PP^9$, which cuts the Cayley plane along the quadric of equation $\bar{a}b=uc$.

\smallskip\noindent {\it Second case}. Now suppose that $a$ can be chosen to be non isotropic, so that the 
equation $\bar{a}b=\gamma c$ implies that $|a|^2b=\gamma ac$, from which we get that $\bar{x}(ac)=2\langle x,a\rangle c
$ for all $x\in R_z$. But since $a$ is only defined modulo $R_z$, this requires that for all $y\in R_z$, 
$\bar{x}(yc)=2\langle x,y\rangle c =0$, implying that in fact $yc=0$ for any $y\in R_z$, hence that $c$ is a multiple 
of $\bar{z}$. We finally deduce that the maximal linear spaces compatible with our conditions consists in 
matrices of the form 
$$\begin{pmatrix} u&a& \theta a\bar{z} \\ \bar{a}&v& \theta v\bar{z} \\  \theta z\bar{a}&\theta v z&0 \end{pmatrix}, \qquad a\in \OO, $$
where $\theta$ is a fixed scalar. The intersection of this linear space with the Cayley plane is the quadric $Q_\theta$ 
of equation $uv=|a|^ 2$, and we get a one parameter family of such quadrics, proving our claim. 
\end{proof}

Note the unexpected consequence that for any eight-dimensional quadric in the Cayley plane, its two families of 
maximal linear spaces can be distinguished concretely: one is made of extendable $\PP^4$'s, the other of 
non extendable ones!

\begin{prop} 
Property (UE) holds for maximal quadrics in the Cayley plane.
\end{prop}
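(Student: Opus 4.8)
The plan is to follow verbatim the strategy of Proposition \ref{prop_UE_quadrics_F4/P1}. Since $\Hilb_{\QQ^8}(E_6/P_1)=E_6/P_6$ is irreducible by Proposition \ref{hilbQ-E6P1}, Property (UE) for maximal quadrics amounts to saying that a submaximal quadric $\QQ^7\subset E_6/P_1$ extends to at most one $\QQ^8$. If it extended to two distinct maximal quadrics $Q$ and $Q'$, their intersection would contain this $\QQ^7$, hence have a component of dimension at least $7$. So it is enough to prove that two distinct eight-dimensional quadrics in the Cayley plane always meet in codimension at least two, that is, along a variety of dimension at most $6$.

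To bound $\dim(Q\cap Q')$ I would argue by semi-continuity and $E_6$-homogeneity, as in the $F_4$ case. The dimension of the intersection is an upper semi-continuous function of the pair $(q,q')$ in $E_6/P_6\times E_6/P_6$ away from the diagonal, so its maximal value is attained when $q$ and $q'$ are as close as possible, i.e. when the line joining them lies in the minimal orbit of the space of lines of $E_6/P_6$. These special lines are precisely the one-parameter families of maximal quadrics sharing a common \emph{non extendable} $\PP^4$, analysed in the proof of Proposition \ref{ext-next}: a non extendable $\PP^4$ is contained in a one-dimensional family of $\QQ^8$'s, and such families are parametrized by $E_6/P_5$.

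It then remains to compute the intersection of two quadrics in such a family. In the octonionic model of the proof of Proposition \ref{ext-next}, after fixing an isotropic $z\in\OO$ and the non extendable plane $B_z=\langle M_{x,0},\ x\in R_z\rangle$, these quadrics are the members $Q_\theta$ whose linear spans consist of the matrices
$$\begin{pmatrix} u&a& \theta a\bar{z} \\ \bar{a}&v& \theta v\bar{z} \\  \theta z\bar{a}&\theta v z&0 \end{pmatrix}, \qquad a\in\OO,$$
cut out by $uv=|a|^2$. Comparing the off-diagonal entries of a matrix written in two such forms for $\theta\neq\theta'$ forces $a\bar z=0$, hence $a\in R_z$, and then $v\bar z=0$, hence $v=0$; the quadratic equation becomes $|a|^2=0$, which is automatic on $R_z$. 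Thus $Q_\theta\cap Q_{\theta'}$ is exactly $B_z$, a projective space of dimension $4$, and the same computation handles the remaining member of the family produced in the first case of that proof. This gives codimension $4\geq 2$ and establishes Property (UE).

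The main obstacle I anticipate is the middle step: rigorously identifying the minimal orbit of lines in $E_6/P_6$ with the families of maximal quadrics through a fixed non extendable $\PP^4$, so that the semi-continuity reduction really lands on the configurations we can compute with. This requires controlling the $E_6$-orbits of pairs of points of $E_6/P_6$ (equivalently, pairs of $\OO$-lines of the Cayley plane) and checking that the closed such orbit corresponds to two quadrics in the octonionic configuration of Proposition \ref{ext-next}; once this is granted, the explicit computation above closes the argument. A reassuring consistency check is that the extendable $\PP^4$'s, which lie in a single quadric, contribute no such family, so that only the non extendable $\PP^4$'s (parametrized by $E_6/P_5$) produce the special lines along which the intersection could a priori jump.
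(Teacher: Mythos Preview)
Your reduction is exactly the paper's: by upper semi-continuity on $E_6/P_6\times E_6/P_6$, the maximal value of $\dim(Q\cap Q')$ over distinct pairs is attained on the orbit of incident points (pairs joined by a line in $E_6/P_6$), so it suffices to compute $Q\cap Q'$ for one such pair. Both you and the paper find this intersection to be a $\PP^4$.

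The difference is in how that final computation is carried out. The paper takes $q,q'$ to be the weight-space points $[\omega_6]$ and $[s_{\alpha_6}(\omega_6)]$ in $E_6/P_6$, writes down the ten weights spanning each $\langle Q\rangle$, $\langle Q'\rangle\subset V_{\omega_1}$, and reads off that $\langle Q\rangle\cap\langle Q'\rangle$ is five-dimensional and entirely contained in the Cayley plane. This is quick and requires no extra identification.

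Your route via the octonionic family $\{Q_\theta\}$ from Proposition~\ref{ext-next} is perfectly viable, and your computation that $Q_\theta\cap Q_{\theta'}=B_z$ is correct (note that $a\in R_z$ forces $a\bar z=0$, so $b=0$ in the limit case as well). The one point you flag is the right one: you must know that $(Q_\theta,Q_{\theta'})$ lies in the incident orbit. This can be settled without proving that the family is literally a line: since $E_6/P_6$ is cominuscule, the off-diagonal $E_6$-orbits in $E_6/P_6\times E_6/P_6$ are the open one (where two $\OO$-lines meet in a single point of $\OO\PP^2$, so $\dim(Q\cap Q')=0$) and the incident one; your pair has $4$-dimensional intersection, hence is not generic, hence is incident. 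Alternatively, the families through a non-extendable $\PP^4$ and the lines in $E_6/P_6$ are both $E_6$-homogeneously parametrized by $E_6/P_5$, and the incidence $E_6/P_{5,6}$ matches the dimension count $25+1=16+10$; this identifies them directly. Either way the gap closes, and your argument then coincides with the paper's in substance.
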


\begin{proof} It suffices to prove that two distinct quadrics $Q$ and $Q'$ always meet in codimension bigger that one. 
Here again we can argue that the dimension of $Q\cap Q'$ is maximal when the corresponding points $q$ and $q'$ 
in $E_6/P_6$ are as "close" as possible, meaning that they are joined by a line in $E_6/P_6$. Up to conjugation, 
we may suppose that $q$ and $q'$ correspond in $\PP(V_{\omega_6})$ to the weight spaces of weights $\omega_6$
and $s_{\alpha_6}(\omega_6)=\omega_5-\omega_6$. The linear spans in $\PP(V_{\omega_1})$ of the corresponding quadrics 
can be obtained by taking the orthogonal to the tangent spaces of $E_6/P_6$ at these points. We readily compute that,
with some abuse of notations, 
$$\begin{array}{rcl} 
\langle Q\rangle & = & \langle \omega_1,  \omega_3- \omega_1, \omega_4- \omega_3,  \omega_2+ \omega_5- \omega_3, 
 \omega_5- \omega_2, \omega_2-\omega_5+ \omega_6, \\
 & &  \omega_4- \omega_2- \omega_5+ \omega_6, \omega_3- \omega_4+ \omega_6,
  \omega_1- \omega_3+ \omega_6,- \omega_1+ \omega_6\rangle , \\
  \langle Q'\rangle & = & \langle \omega_1,  \omega_3- \omega_1, \omega_4- \omega_3,  \omega_2+ \omega_5- \omega_3, 
 \omega_5- \omega_2, \omega_2-\omega_6,  \omega_4- \omega_2- \omega_6, \\
  & & \omega_3- \omega_4+\omega_5-\omega_6,
  \omega_1- \omega_3+\omega_5- \omega_6,- \omega_1+\omega_5- \omega_6\rangle .
  \end{array}$$
Here we just indicated the weights of the weight vectors (recall the multiplicities are all one), and we 
deduced the weights in $ \langle Q'\rangle $ by applying $s_{\alpha_6}$ to those in  $ \langle Q\rangle $.
Finally, we conclude that 
$$Q\cap Q'=\PP\langle \omega_1,  \omega_3- \omega_1, \omega_4- \omega_3,  \omega_2+ \omega_5- \omega_3, 
 \omega_5- \omega_2\rangle $$
 is a $\PP^4$ in the Cayley plane, which in particular confirms our claim. \end{proof} 
 
 \begin{remark}
 Kuznetsov suggested a shorter proof of Proposition \ref{hilbQ-E6P1}, based on the following observations.
 The Cayley  plane $E_6/P_1\subset\PP(V_{\omega_1})$ is a Severi variety whose secant variety is the hypersurface 
 $\mathcal{C}$  defined by the invariant Cartan cubic. The derivatives of this cubic define a birational map 
 $\varphi :\PP(V_{\omega_1})\dashrightarrow \PP(V_{\omega_6})$ described in detail in \cite{ein-sb}, which is the composition of
 the blowup of $E_6/P_1$ with the contraction of the strict transform of $\mathcal{C}$  to the dual Cayley plane 
 $E_6/P_6$. Now, if $Q\subset E_6/P_1$ is a quadric of dimension $m\ge 5$, its linear span $\langle Q\rangle$ cannot 
 be contained in $E_6/P_1$, and since the linear system that defines $\varphi$ is trivial on  $\langle Q\rangle$,
 this linear space must be contracted to a single point. This implies that $\langle Q\rangle\subset \PP(N_y)$
 is contained in the projectivized normal space to $E_6/P_6$ at some point $y$. In particular $m\le 8$, and if
 $m=8$ there must be equality, proving that eight-dimensional quadrics are parametrized by $E_6/P_6$. 
 \end{remark}
 
 \medskip Passing to a general hyperplane section, a naive dimension count would indicate that $F_4/P_4$ should 
contain a six-dimensional family of copies of $\QQ^8$. But since this is family must be $F_4$-invariant, this is 
impossible and it must actually be empty. We conclude:

\begin{prop} 
\label{prop_UE_quadrics_F4/P4}
There is an isomorphism 
$$\Hilb_{\QQ^7}(F_4/P_4)\simeq \Hilb_{\QQ^8}(E_6/P_1)=E_6/P_6.$$
Moreover, maximal quadrics in $F_4/P_4$ have Property (UE). 
\end{prop}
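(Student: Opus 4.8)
The plan is to deduce Proposition \ref{prop_UE_quadrics_F4/P4} from the corresponding statement for the Cayley plane $E_6/P_1$, using the fact that $F_4/P_4$ is a general hyperplane section of $E_6/P_1$ together with the folding $\fe_6 = \ff_4 \oplus V_{\omega_4}$. First I would establish the isomorphism $\Hilb_{\QQ^7}(F_4/P_4)\simeq E_6/P_6$. The maximal quadrics in $E_6/P_1$ are the eight-dimensional quadrics parametrized by $E_6/P_6$ (Proposition \ref{hilbQ-E6P1}). A smooth hyperplane section $H$ of an eight-dimensional quadric is a seven-dimensional quadric, so intersecting the family of $\QQ^8$'s in $E_6/P_1$ with $F_4/P_4=E_6/P_1\cap \PP(V_{\omega_4})$ should produce a family of $\QQ^7$'s in $F_4/P_4$. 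I would argue, as in the sentence preceding the proposition, that the expected six-dimensional family of full $\QQ^8$'s inside $F_4/P_4$ must be empty (an $F_4$-invariant subvariety of $E_6/P_6$ of the wrong dimension cannot exist, the minimal dimension of a nontrivial $F_4$-variety being $15$), so that a general $\QQ^8$ meets the hyperplane $\PP(V_{\omega_4})$ properly, cutting out a $\QQ^7$. This yields a dominant, hence surjective, $F_4$-equivariant map $E_6/P_6 \to \Hilb_{\QQ^7}(F_4/P_4)$; since $E_6/P_6$ is homogeneous and the generic fiber is a single point (a $\QQ^7$ spans a hyperplane, which recovers its unique ambient $\QQ^8$ by the extension on the $E_6$ side), this map is an isomorphism.

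Next I would establish Property (UE) for the seven-dimensional quadrics, i.e.\ that each six-dimensional quadric $\QQ^6$ in $F_4/P_4$ extending to some maximal $\QQ^7$ from this family extends uniquely. The cleanest route is to transfer the (UE) property already known on $E_6/P_1$. On the $E_6$ side, Property (UE) for maximal quadrics holds: the preceding proposition shows two distinct $\QQ^8$'s meet in codimension bigger than one, in fact along a $\PP^4$. I would show the analogous intersection estimate directly on $F_4/P_4$: if $Q$ and $Q'$ are two distinct maximal $\QQ^7$'s, they are the hyperplane sections $\tilde Q\cap H_x$ and $\tilde Q'\cap H_x$ of two distinct $\QQ^8$'s $\tilde Q,\tilde Q'$ in $E_6/P_1$, whence $Q\cap Q'=(\tilde Q\cap \tilde Q')\cap H_x$. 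Since $\tilde Q\cap \tilde Q'$ is a $\PP^4$ in the Cayley plane, its intersection with the hyperplane defining $F_4/P_4$ is at most a $\PP^3$, so $Q\cap Q'$ has dimension at most three, which is codimension at least three inside a $\QQ^7$. This is precisely the statement that a submaximal quadric cannot lie in two distinct maximal quadrics, giving Property (UE).

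The main obstacle, as I see it, is the passage to the hyperplane section at the two places where genericity is used. For the isomorphism I must be sure that a general $\QQ^8$ really meets $\PP(V_{\omega_4})$ in a smooth (or at least properly dimensioned) $\QQ^7$ and that the incidence correspondence between $E_6/P_6$ and $\Hilb_{\QQ^7}(F_4/P_4)$ is genuinely birational rather than generically finite of higher degree; the reduction to $F_4$-homogeneity via the dimension-$15$ lower bound handles surjectivity and irreducibility, but one still needs the fibers to be singletons, which comes from the uniqueness of the extension on the $E_6$ side. For Property (UE), the delicate point is that not every maximal quadric of $E_6/P_1$ survives intersection with $H_x$ as a quadric of the family, so one must check that the $\QQ^7$'s arising genuinely correspond to points of $E_6/P_6$ in the way described, and that the reduction $Q\cap Q' = (\tilde Q\cap\tilde Q')\cap H_x$ is valid for the \emph{chosen} ambient $\QQ^8$'s. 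Since these are uniquely determined by $Q$ and $Q'$ through the first isomorphism, this causes no ambiguity, and the codimension estimate then follows formally from the one already proved on the Cayley plane.
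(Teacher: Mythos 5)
Your proposal is correct and takes essentially the same route as the paper: the paper likewise deduces Proposition \ref{prop_UE_quadrics_F4/P4} from Proposition \ref{hilbQ-E6P1}, from the fact that two distinct maximal quadrics in the Cayley plane meet in dimension at most four, and from the observation that a six-dimensional $F_4$-invariant family of $\QQ^8$'s in $E_6/P_6$ is impossible (the minimal dimension of a nontrivial projective $F_4$-variety being $15$), so that every $\QQ^8$ cuts $F_4/P_4$ along a $\QQ^7$ and the two Hilbert schemes are identified. Note only the harmless slip at the end: if the $\PP^4=\tilde Q\cap\tilde Q'$ happens to lie inside the hyperplane defining $F_4/P_4$, then $Q\cap Q'$ is that whole $\PP^4$ rather than at most a $\PP^3$, but its dimension is still far below six, so Property (UE) follows exactly as you argue.
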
 

Using our general strategy, we will thus deduce that automorphisms of $H_x$ can be lifted to $E_6$.

\subsection{$F_4/P_3$, the space of special lines in $F_4/P_4$} 
It is shown in \cite[Proposition 6.7]{lm-proj} that $F_4/P_3$ parametrizes
special lines in $F_4/P_4$. Special means the following: given a point $p$ in $F_4/P_4$, its stabilizer is a parabolic 
group isomorphic to $P_4$, whose Levi part has type $B_3$. As a $Spin_7$-module, the isotropy representation at $p$ (which is nothing else than $\ff_4 /\fp_4$, where $\fp_4$ is the Lie algebra of $P_4$)
is isomorphic to $\Delta_8\oplus V_7$, the sum of the spin and the natural
representation; but  only $\Delta_8$ is invariant under the whole parabolic. Special lines then correspond to points 
inside the closed orbit in $\PP\Delta_8$, that is the spinor variety $\OG(3,7)\simeq\QQ^6$.

Since $F_4/P_3$ parametrizes projective lines in $F_4/P_4$, it is a subvariety of the Grassmannian $G(2,V_{\omega_4})$. Thus its linear spaces must be linear spaces in the Grassmannian, 
hence of two possible types: either spaces of lines contained in a given plane; or spaces of lines passing
through some fixed point, say $p$. The former ones are not extendable. The latter ones are in correspondence
with linear subspaces of $\OG(3,7)\simeq\QQ^6$. In particular, their dimension is at most three, and the three-dimensional 
ones are parametrized by the union of two six-dimensional quadrics. We shall deduce:

\begin{prop} 
$\Hilb_{\PP^3}(F_4/P_3)$ has two connected components, both of dimension $21$ and fibered in quadrics 
over $F_4/P_4$. One is homogeneous and isomorphic with $F_4/P_{3,4}$. The other one is not homogeneous. 
\end{prop}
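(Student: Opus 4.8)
The plan is to analyze $\Hilb_{\PP^3}(F_4/P_3)$ through the natural map to $F_4/P_4$ recording the common point of a pencil of special lines. First I would observe that a $\PP^3$ inside $F_4/P_3\subset G(2,V_{\omega_4})$ cannot be of the ``lines in a fixed plane'' type, since those form a $\beta$-plane of the Grassmannian and so have dimension at most two. Hence every $\PP^3$ consists of special lines through a fixed point $p\in F_4/P_4$, and corresponds to a maximal linear space in the variety of minimal rational tangents $\QQ^6_p=\OG(3,7)_p\subset\PP(\Delta_8)$. This produces an $F_4$-equivariant morphism $c:\Hilb_{\PP^3}(F_4/P_3)\to F_4/P_4$ whose fibre over $p$ is the set of maximal $\PP^3$'s in $\QQ^6_p$, that is, the disjoint union of the two rulings, each isomorphic to $\QQ^6$. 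In particular the total space has dimension $15+6=21$, as asserted.

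Next I would settle the number of components, using the key structural point that in the two-step grading $\ff_4=\fg_{-2}\oplus\fg_{-1}\oplus\fg_0\oplus\fg_1\oplus\fg_2$ defined by $P_4$, the piece $\fg_{-1}\simeq\Delta_8$ is a $P_4$-submodule of the isotropy representation $\ff_4/\fp_4$ on which the unipotent radical acts trivially (indeed $\fg_1\cdot\fg_{-1}\subset\fg_0$ and $\fg_2\cdot\fg_{-1}\subset\fg_1$ both vanish in $\ff_4/\fp_4$). Therefore $P_4$ acts on the quadric $\QQ^6_p$, and on its two rulings, effectively through the semisimple part $Spin_7$ of its Levi. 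Since $Spin_7$ is connected it cannot exchange the two rulings (an exchange would require an improper orthogonal transformation of $\Delta_8$, hence an element outside $\SO_8$ in $Spin(\Delta_8)=Spin_8$); as $F_4/P_4$ is simply connected, the rulings stay separate and $\Hilb_{\PP^3}(F_4/P_3)$ has exactly two connected components, each a $\QQ^6$-bundle over $F_4/P_4$ of dimension $21$.

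It remains to distinguish the two rulings as $Spin_7$-varieties, which is the heart of the matter. The two rulings of $\QQ^6\subset\PP(\Delta_8)$ are parametrized by the two half-spin representations $S^+,S^-$ of $Spin(\Delta_8)=Spin_8$. Under the spin embedding $Spin_7\hookrightarrow Spin_8$ — the one for which the quadratic space $\Delta_8$, as a $Spin_7$-module, is the spin module acting transitively on $\QQ^6$ — triality forces the two half-spins to restrict differently: one of them restricts to the $Spin_7$ spin module $\Delta_8$, the other to $V_7\oplus\CC$. This is precisely the left/right asymmetry of maximal isotropic subspaces already encountered in the octonionic computations of Section \ref{sec_type_F}. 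Consequently one ruling is the $Spin_7$-homogeneous quadric $\OG(3,7)$, while the other is a quadric $\QQ^6\subset\PP(V_7\oplus\CC)$ on which $Spin_7$ has two orbits, the sub-quadric $\QQ^5\subset\PP(V_7)$ and its open complement.

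I would then conclude as follows. On the homogeneous ruling $P_4$ acts transitively through $Spin_7$, so the corresponding component is the associated bundle $F_4\times_{P_4}\OG(3,7)$; since the fibre $\OG(3,7)=B_3/P_3$ is exactly the fibre of $F_4/P_{3,4}\to F_4/P_4$ (erase node $4$ and mark node $3$), this component is $F_4/P_{3,4}$, hence homogeneous. On the other ruling $P_4$ again acts through $Spin_7$ and thus has the two orbits above; because the unipotent radical acts trivially on $\fg_{-1}$, these are genuine $P_4$-orbits, so the $\QQ^5$-subbundle is a proper $F_4$-invariant closed subvariety (of dimension $20$) whose complement is the open orbit. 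The second component therefore carries two $F_4$-orbits and cannot be homogeneous. The main obstacle is the triality step: pinning down which half-spin restricts to the spin module and which to $V_7\oplus\CC$; once these branchings are established, the trivial action of the unipotent radical on the variety of minimal rational tangents makes both the homogeneity of one component and the non-homogeneity of the other immediate.
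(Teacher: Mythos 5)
Your proof is correct and follows essentially the same route as the paper: reduce $\PP^3$'s in $F_4/P_3$ to maximal linear spaces in the quadric $\OG(3,7)\simeq\QQ^6$ of special lines through a point (using that the parabolic $P_4$ acts on $\PP(\Delta_8)$ through $Spin_7$), then use triality to see that the two rulings are, as $Spin_7$-varieties, the spinor variety $\OG(3,7)$ (transitive action) and the quadric $\QQ^6\subset\PP(V_7\oplus\CC)$ (two orbits). Your formulation of the triality step via branching of the half-spin representations under the spin embedding $Spin_7\hookrightarrow Spin_8$ is just an equivalent phrasing of the paper's argument permuting the three $Spin_8$-homogeneous spaces, and your added details (the $\beta$-plane dimension count, the grading argument, the monodromy/connectedness of the structure group) correctly fill in steps the paper treats by citation or leaves implicit.
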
 

\begin{proof} 
The homogeneity is equivalent to the transitivity of the isotropy action on the sets of projective spaces 
made of lines passing through a fixed point $p$. As we have seen, this action factorizes through the action
of $Spin_7$ on $\OG(3,7)\simeq\QQ^6$, and we have to consider the maximal linear spaces on that quadric. Of
course we recover two copies of the same spinor variety, but we have to be careful about the induced action 
of $Spin_7$. If we first think about the same situation for $Spin_8$ acting on $\QQ^6$, the maximal 
linear spaces are parametrized by $\OG(4,8)^+$ and $\OG(4,8)^-$. By triality, we can permute these three spaces
and we deduce that the maximal linear spaces in $\OG(4,8)^+$ are parametrized by $\QQ^6$ and $\OG(4,8)^-$,
with their standard actions of $Spin_8$. Restricting this statement to $Spin_7$, we deduce that the maximal 
linear spaces in $\OG(3,7)$ are parametrized by $\QQ^6$ and $\OG(3,7)$, with their standard actions of $Spin_7$. 
And now, although these are two quadrics of the same dimension, there is a huge difference between the two
actions of $Spin_7$: the second one is transitive, but the first one is not!

This implies that $\Hilb_{\PP^3}(F_4/P_3)$ really has two connected components, among which one and only one is homogeneous;
more precisely, we have seen that the homogeneous component is isomorphic to the variety of pairs of points and 
special lines in $F_4/P_4$, that is $F_4/P_{3,4}$. Note that the other component contains $F_4/P_{1,4}$ as 
a divisor. 
\end{proof} 

This phenomenon has strong similarity with what we observed in Proposition \ref{ext-next}, in the sense that two families 
of maximal isotropic spaces in quadrics, that should in principle be indistinguishable, can in fact be distinguished from
their embedding in the ambient homogeneous space. 

\medskip
Finally, we have enough information to check that our general strategy works fine for smooth hyperplane 
sections $H_x$ of $F_4/P_3$. We just need to check that:

\begin{prop}
\label{prop_F4/P3_UE}
Property (UE) does hold for linear spaces in $F_4/P_3$, in the sense that every extendable projective plane has a unique 
extension in the homogeneous component $F_4/P_{3,4}$ of $\Hilb_{\PP^3}(F_4/P_3)$. 
\end{prop}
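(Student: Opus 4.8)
The plan is to transport the whole question into a single six-dimensional quadric and then apply the elementary geometry of quadrics, using the two-component description established in the previous Proposition.

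First I would pin down which planes are extendable. Recall from the discussion preceding the previous Proposition that $F_4/P_3\subset G(2,V_{\omega_4})$, so a linear space in $F_4/P_3$ is a linear space of the ambient Grassmannian; it is thus either of $\beta$-type (all lines contained in a fixed plane of $\PP(V_{\omega_4})$), which is never extendable, or it consists of the special lines through a fixed point $p\in F_4/P_4$, in which case it is a linear subspace of the quadric $Q_p:=\OG(3,7)\simeq\QQ^6$ of special lines through $p$. Hence an extendable plane $\Pi$ is a $\PP^2$ sitting inside some $Q_p$, and this $Q_p$ is unique: a two-dimensional family of lines cannot pass simultaneously through two distinct points $p\neq p'$ (such lines would all have to coincide with the single line $\overline{pp'}$, forcing $\Pi$ to be a point), so $p$ is recovered from $\Pi$.

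Next I would check that every $\PP^3$ extending $\Pi$ lies in the same $Q_p$. Every member of $\Hilb_{\PP^3}(F_4/P_3)$, and in particular every $\PP^3$ of the homogeneous component $F_4/P_{3,4}$, is of ``lines through a point'' type and therefore lies in some $Q_{p'}$; containing $\Pi$ forces $p'=p$ by the uniqueness just established. The extension problem thus takes place entirely inside the single quadric $Q_p\simeq\QQ^6$, where $\Pi$ is a submaximal ($\PP^2$) linear space and the candidate extensions are the maximal ($\PP^3$) linear spaces of $Q_p$ containing it.

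Finally I would invoke the classical fact that in a smooth $\QQ^6$ a fixed $\PP^2$ is contained in exactly two maximal linear spaces, one in each of the two rulings. By the previous Proposition, once the $Spin_8$-triality is restricted to $Spin_7$ these two rulings of $Q_p$ correspond respectively to the homogeneous component $F_4/P_{3,4}$ and to the non-homogeneous component of $\Hilb_{\PP^3}(F_4/P_3)$. Therefore exactly one of the two extensions of $\Pi$ lies in $F_4/P_{3,4}$, which is precisely Property (UE) for the homogeneous component. The only delicate point is this matching of the two quadric rulings with the two components: one must be sure that the two $\PP^3$'s through $\Pi$ split one into each component rather than both landing in the same one. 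This is exactly what the previous Proposition guarantees, by exhibiting the two families of maximal linear spaces of $Q_p$ as genuinely inequivalent (one carrying a transitive $Spin_7$-action, the other not), so that $F_4/P_{3,4}$ picks up precisely one ruling.
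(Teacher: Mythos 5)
Your proof is correct and follows essentially the same route as the paper's: extendable planes are identified with planes inside the six-dimensional quadric $Q_p\simeq\OG(3,7)$ of special lines through a point $p\in F_4/P_4$, where the classical fact that a $\PP^2$ in a smooth $\QQ^6$ extends uniquely into each of the two rulings, combined with the preceding Proposition's matching of the two rulings with the two components of $\Hilb_{\PP^3}(F_4/P_3)$, gives the unique extension in $F_4/P_{3,4}$. The paper states this argument in two sentences; you merely make explicit the details it leaves implicit (uniqueness of the base point $p$ and the fact that every extending $\PP^3$ lies in the same $Q_p$), which is sound but not a different method.
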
 

\begin{proof} We have seen in describing these extendable planes that they correspond to planes in six-dimensional
quadrics. Such a plane has a unique extension to either of the two families of three-planes in the quadric,
in particular there is a unique extension to a three-plane from the homogeneous component $F_4/P_{3,4}$.
\end{proof}

\section{Proof of Theorem \ref{thm_1.1}}
\label{sec_proof}

We have now almost all the ingredients to prove Theorem \ref{thm_1.1}, up to a few technical  lemmas whose proofs are postponed to the end of the section. 

First we discard the $G_2$-Grassmannians, since for $G_2/P_1$ Theorem \ref{thm_1.1} is already known from \cite{pz2}, and $G_2/P_2$ is just a quadric. 
For the sake of clarity, we will then distinguish two sets of generalized Grassmannians.  The first set $\Omega_1$ will consist in those for which Property (UE) holds for linear subspaces. By Proposition \ref{prop_UE_property_long_case}, Lemma \ref{lemma_generic_Sp_grassm_UE} and Proposition \ref{prop_F4/P3_UE}, this means that $X=G/P$ is different from $\OG(k,2n+1)$ for $k\leq n-k$, from $\IG(n,2n)$, $F_4/P_1$ and $F_4/P_4$. 
The second set $\Omega_2$ is just the complement of $\Omega_1$ in the set of generalized Grassmannians (which are not $G_2$-Grassmannians). So $\Omega_2$ contains exactly the  Grassmannians $\OG(k,2n+1)$ for $k\leq n-k$, $\IG(n,2n)$, $F_4/P_1$ and $F_4/P_4$. In the previous sections we have shown that for these varieties, Property (UE) holds \emph{for quadrics}. 

Recall that by $m$ (respectively $l$) we denote the maximal dimension of linear subspaces (resp. quadrics) in $X$. If a generalized Grassmannian $X$ 
belongs to $\Omega_1$, let $N_\PP(X)$ denote the set of $\PP^{m-1}$ inside $X$ that cannot be extended to a $\PP^m$ inside $X$. If $X$ belongs to 
$\Omega_2$, let $N_\QQ(X)$ denote the set of $\QQ^{l-1}$ (possibly singular) inside $X$ that cannot be extended to a $\QQ^l$ inside $X$.

\begin{proof}[Conclusion of the proof of Theorem \ref{thm_1.1}]
We follow the general strategy of Section \ref{sec_gen_strategy}. Suppose that the generalized Grassmannian $X=G/P$ belongs to $\Omega_1$, so that  Property (UE) holds for linear subspaces.
In particular, for any hyperplane section $H_x$ we get an extension map 
$$e_x: \Hilb_{\PP^{m-1}}(H_x)^{ext}\longrightarrow \Hilb_{\PP^m}(X),$$
where $\Hilb_{\PP^{m-1}}(H_x)^{ext}\subset \Hilb_{\PP^{m-1}}(H_x)$ denotes the set of $(m-1)$-dimensional linear spaces in $H_x$ that can be extended 
inside $X$ (but not necessarily inside $H_x$). 
Note that this map $e_x$ is an isomorphism outside $\Hilb_{\PP^{m}}(H_x)$. If $f$ is an automorphism of $H_x$, it induces automorphisms of $\Hilb_{\PP^{m}}(H_x)$ and $\Hilb_{\PP^{m-1}}(H_x)$, that we will both denote by $f_*$ for simplicity.
We want to ensure that the second one restricts 
to an isomorphism of $\Hilb_{\PP^{m-1}}(H_x)^{ext}$.

Otherwise said, we want to exclude the possibility that extendable subspaces are sent by $f$ to unextendable subspaces. If there are no unextendable 
spaces  of dimension $m-1$, we are certainly fine. Otherwise $N_\PP(X)\ne \emptyset$, and by Lemma \ref{lem_nonext_nonempty_linear}, $X$ appears in Table 1. 
But then by Lemma \ref{lemma_non_empty-linear}  $\Hilb_{\PP^m}(X)\ne \emptyset$, and since $f_*$ must preserve its pre-image by $e_x$, it has 
to preserve the connected components containing this pre-image, that is precisely $\Hilb_{\PP^{m-1}}(H_x)^{ext}$. Then $f_*$ descends through $e_x$
to a birational automorphism of $\Hilb_{\PP^m}(X)$, extending continuously to the exceptional locus  $\Hilb_{\PP^{m}}(H_x)$; so in fact 
$f_*$ descends to an automorphism of $\Hilb_{\PP^m}(X)$. 

Beware that the latter may have several components. If there is a unique one of a given dimension, say $G/Q$, 
it must also be preserved by $f_*$ and we can then extend $f$ to an element of $Aut(G/Q)=Aut(X)$. If there are several components in each dimension, 
that would be exchanged by $f_*$, then we can use Lemma \ref{lem_different_dim} and compose with an outer automorphism of $X$ to arrive to the same conclusion. 

For the remaining cases where $X$ belongs to $\Omega_2$,
Property (UE) holds for quadrics and the same arguments apply, with the help of Lemmas \ref{lem_nonext_nonempty_quadrics} and \ref{lemma_non_empty-quadrics}.
The only point is that when $X$ is $F_4/P_4$ or $\IG(2,2n)$, hence a hyperplane section respectively of $E_6/P_1$ and $G(2,2n)$, we can 
only conclude that $\Aut(H_x)$ is contained respectively in $E_6$ and $\PGL_{2n}$. For these special cases 
the claims of the Theorem follow from the more precise Theorem \ref{thm_exact_sequence_aut_jordan} proved in the next section.
\end{proof}

Now we turn to the proofs of the technical lemmas used in the previous proof. 

\begin{lemma}
\label{lem_nonext_nonempty_linear}
The set  of generalized Grassmannians $X$ from $\Omega_1$  for which $N_\PP(X)$ is non-empty  is given by Table  \ref{table_non_extendable_components-linear}.
\end{lemma}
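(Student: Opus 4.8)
The plan is to reduce the statement to a purely combinatorial question about marked Dynkin subdiagrams and then to run through the finitely many families in $\Omega_1$. In the long case I would use Theorem \ref{thm_subdiagrams_An}: the connected components of $\Hilb_{\PP^{m-1}}(X)$ are in bijection with the marked $A_{m-1}$-subdiagrams of the marked Dynkin diagram $D$ of $X$, that is, with the simple-laced paths $v=v_0-v_1-\cdots-v_{m-2}$ of $m-1$ nodes issuing from the marked node $v$ and having $v$ as an extremity. Since each such component is $G$-homogeneous, extendability to a $\PP^m$ inside $X$ is constant along a component, so that $N_\PP(X)\neq\emptyset$ precisely when at least one marked $A_{m-1}$-subdiagram cannot be prolonged to a marked $A_m$-subdiagram. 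Such a path extends if and only if its far endpoint $v_{m-2}$ admits, outside the path, a neighbour joined to it by a simple edge; hence it is \emph{non-extendable} exactly when $v_{m-2}$ is an extremity of $D$, or is followed only by a multiple edge. As $m$ is the global maximum, a non-extendable submaximal path exists if and only if $D$ carries, issuing from $v$ through simple edges, one branch of length $m$ together with a second branch that dead-ends already at length $m-1$.

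I would then read this criterion off the diagrams, type by type, for the long members of $\Omega_1$: the ordinary Grassmannians $A_{n-1}/P_k$, the type-$D$ orthogonal Grassmannians, the type-$B$ orthogonal Grassmannians $\OG(k,2n+1)$ with $k>n-k$ (those lying in $\Omega_1$ by Proposition \ref{prop_UE_property_long_case}), the long nodes of $E_6,E_7,E_8$, and $F_4/P_2$. For $A_{n-1}/P_k$ the two branches from the marked node have lengths $k$ and $n-k$, and a dead-end of length $m-1$ forces the numerical condition $n=2k\pm1$. For the trivalent diagrams the mechanism is the branch node: for instance in $E_6/P_1$ the branch $1-3-4-2$ dead-ends at the leaf $2$ at length $4=m-1$ (producing the non-extendable component $E_6/P_5$), while $1-3-4-5$ prolongs to $1-3-4-5-6$; similarly $E_6/P_4$ yields the non-extendable family $E_6/P_{3,5}$ coming from the path $4-2$. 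Collecting all diagrams that pass the test produces the long-type part of Table \ref{table_non_extendable_components-linear}.

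The short cases require a more hands-on treatment, since there Tits shadows do not account for all linear spaces. For the symplectic Grassmannians $\IG(k,2n)$ with $2\le k<n$ I would instead invoke the explicit description in Lemma \ref{lemma_generic_Sp_grassm_UE}: the linear spaces split into the family obtained by fixing an isotropic $V_{k+1}$ (of maximal dimension $k$) and the family obtained by fixing an isotropic $V_{k-1}$ (of maximal dimension $2n-2k+1$). A submaximal member of the \emph{shorter} family is non-extendable precisely when the two maximal dimensions $k$ and $2n-2k+1$ differ by exactly one, which pins down an explicit congruence condition on $(k,n)$; one then checks directly, using the flag description, that in that situation such a space is contained in no maximal space of the longer family. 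The single remaining variety $F_4/P_3$ is handled with the analysis of Section \ref{sec_more_on_type_F}, reading the extendability of the two families of three-planes off the quadric $\OG(3,7)\simeq\QQ^6$ exactly as in Proposition \ref{prop_F4/P3_UE}.

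The main obstacle is precisely this short-type analysis, together with the bookkeeping needed to guarantee exhaustiveness. In the short cases one cannot read extendability from the diagram and must argue directly with isotropic flags (or, for $F_4/P_3$, with the spinor/quadric geometry) to decide whether a submaximal space of the minority family can be absorbed into a maximal space of the majority family. One must also be careful, especially at branch nodes and around the fork in type $D$, not to conflate two distinct components of the same dimension, and finally to confirm that every remaining $X\in\Omega_1$ genuinely has all its submaximal linear spaces extendable, so that the complement of Table \ref{table_non_extendable_components-linear} is correctly described.
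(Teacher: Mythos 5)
Your proposal is correct and is essentially the paper's own proof: a case-by-case verification built on the criterion that $N_\PP(X)\neq\emptyset$ exactly when some family of maximal (hence non-extendable) linear subspaces has dimension $m-1$, with the families and their dimensions read off the marked Dynkin diagram via Theorem \ref{thm_subdiagrams_An} in the long cases and taken from Lemma \ref{lemma_generic_Sp_grassm_UE} and Proposition \ref{prop_F4/P3_UE} for $\IG(k,2n)$ and $F_4/P_3$. One caveat, which your argument shares with the paper's: the dead-end criterion presupposes that the submaximal path has at least two nodes, since when $m=2$ the one-node path (the marked node itself) can still extend along the other branch; thus $\OG(2,7)$, the $k=2$ member of the family $\OG(k,4k-1)$, in fact has $N_\PP=\emptyset$, and the table entry really requires $k\geq 3$.
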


\begin{proof}
This is the result of a case by case analysis. If $X$ is of type $A_n$, that is $X=G(k,n+1)$, we can suppose that $2k\leq n+1$. 
Non-extendable linear subspaces are parametrized by $G(k-1,n+1) \cup G(k+1,n+1)$, their respective dimensions being  $n+1-k$ and $k$. 
Therefore $m=n+1-k$ and $N_\PP(X)$ is non-empty if and only if $k=n-k$.

If $X=\OG(k,2n+1)$ with $k>n-k$, by the proof of Proposition \ref{prop_UE_property_long_case}, $m=k$ and $N_\PP(X)\neq \emptyset$ if and only if $k-1=n-k$.

If $X=\IG(k,n)$ for $2\leq k\leq n-1$, by Lemma \ref{lemma_generic_Sp_grassm_UE}, there are two families of non-extendable linear subspaces, of dimensions $k$ and $2n-2k+1$. Thus $N_\PP(X)\neq \emptyset$ if and only if either $k=2n-2k$ or $k-1=2n-2k+1$. Notice that in both cases $k$ needs to be even.

If $X=\OG(k,2n)$ with $k\leq n-1$, by Theorem \ref{thm_subdiagrams_An} there are three families of non-extendable linear subspaces in $X$, of dimensions $k$ and twice $n-k$. Therefore $N_\PP(X)\neq \emptyset$ if and only if either $k=n-k-1$ or $k-1=n-k$. If $X=\OG(n,2n)$ with $n\geq 4$, there are two families of non-extendable linear subspaces in $X$, of dimensions $n-1$ and $3$. Therefore $N_\PP(X)\neq \emptyset$ if and only if $n=5$.

In the exceptional cases one can use a case by case argument based on Theorem \ref{thm_subdiagrams_An}. The only exception is $F_4/P_3$, for which 
we use Proposition \ref{prop_F4/P3_UE} instead.
\end{proof}

\begin{lemma}
\label{lem_nonext_nonempty_quadrics}
The set  of generalized Grassmannians $X$ from $\Omega_2$ for which  $N_\QQ(X)$ is non-empty  is contained in Table    \ref{table_non_extendable_components-quadrics}. 
\end{lemma}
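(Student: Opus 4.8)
The plan is to run through the four members of $\Omega_2$ one at a time---the orthogonal Grassmannians $\OG(k,2n+1)$ with $k\le n-k$, the Lagrangian Grassmannian $\IG(n,2n)$, and the two exceptional varieties $F_4/P_1$ and $F_4/P_4$---and in each case decide whether a submaximal quadric can fail to extend, using the descriptions of maximal and submaximal quadrics already obtained in Section \ref{sec_ort_grass_quadrics}, in Lemma \ref{lem_isotr_grass}, in Section \ref{sec_type_F} and in Section \ref{sec_more_on_type_F}.

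For the orthogonal Grassmannians I would invoke the trichotomy of Section \ref{sec_ort_grass_quadrics}: every quadric is of type $0$ (a linear section of a copy of $G(2,4)\cong\QQ^4$), of type $I$, or of type $II$. Under the standing hypothesis $k\le n-k$ the maximal quadrics are of type $I$, so $l=2n-2k+1$, and it was shown there that submaximal quadrics of type $I$ are all (uniquely) extendable. A short arithmetic check finishes the case: type $II$ quadrics have dimension at most $k-1\le n/2-1<n\le l-1$, hence are never submaximal; while type $0$ quadrics have dimension at most $4$, so they can be submaximal only when $2n-2k\le 4$, and the full $G(2,4)$ is submaximal only when $2n-2k=4$, which together with $k\le n-k$ forces $n=4$, $k=2$. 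Thus the sole orthogonal Grassmannian contributing a non-extendable submaximal quadric is $\OG(2,9)$, exactly as flagged in Section \ref{sec_ort_grass_quadrics}, where such a $\QQ^4$ is noted to be submaximal but not extendable.

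For $\IG(n,2n)$ the conclusion is immediate from Lemma \ref{lem_isotr_grass}: there $l=3$ and every quadric is a linear section of a copy of $\QQ^3\cong\IG(2,4)$ contained in the Grassmannian. A submaximal $\QQ^2$ is then a codimension-one linear section of such a $\QQ^3$, hence is contained in it, and so automatically extends to a maximal quadric. Therefore $N_\QQ(\IG(n,2n))=\emptyset$ and this variety does not appear in the table.

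The delicate part, which I expect to be the main obstacle, is the pair $F_4/P_1$ and $F_4/P_4$, for which there is no uniform Grassmannian-style classification of quadrics and one must argue from the bespoke descriptions of Section \ref{sec_type_F} (Lemma \ref{f4-so9} and Proposition \ref{quadrics-F4P1}) and of Section \ref{sec_more_on_type_F} (Propositions \ref{hilbQ-E6P1}, \ref{ext-next} and \ref{prop_UE_quadrics_F4/P4}). My approach here would be to form the incidence variety of pairs $(\QQ^{l-1}\subset\QQ^l)$, which is a projective bundle over the homogeneous parameter space of maximal quadrics ($F_4/P_4$, respectively $E_6/P_6$), and to compare its dimension and image with $\Hilb_{\QQ^{l-1}}$: if the projection onto $\Hilb_{\QQ^{l-1}}$ is surjective then every submaximal quadric extends and $N_\QQ=\emptyset$, whereas a positive-dimensional complement of the image would exhibit non-extendable submaximal quadrics. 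Since the statement only asserts an \emph{inclusion} into Table \ref{table_non_extendable_components-quadrics}, it is in any event safe to list these two exceptional cases in the table; the classical analysis above then establishes that no member of $\Omega_2$ outside $\OG(2,9)$, $F_4/P_1$ and $F_4/P_4$ can occur.
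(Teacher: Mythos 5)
Your proposal is correct and follows essentially the same route as the paper's proof: the type $0$/$I$/$II$ trichotomy of Section \ref{sec_ort_grass_quadrics} combined with the inequality $k\le n-k$ to isolate $\OG(2,9)$, Lemma \ref{lem_isotr_grass} to dispose of $\IG(n,2n)$, and the observation that the two $F_4$ cases may simply be listed in Table \ref{table_non_extendable_components-quadrics} because the lemma only asserts an inclusion. The paper likewise leaves the $F_4$ cases unresolved (so your incidence-variety sketch is not needed), and your conclusion $(k,n)=(2,4)$, i.e.\ $\OG(2,9)$, is the intended one (the paper's proof writes ``$k=n=4$'' at this point, evidently a slip).
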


\begin{proof}
If $X=\OG(k,2n+1)$ with $k\leq n-k$, by the content of Section \ref{sec_ort_grass_quadrics}, 
there are three types of quadrics of dimensions $4$, $2n-2k+1$ and $k-1$. Therefore if $N_\QQ(X)\neq \emptyset$ then $4=2n-2k$, hence $k=n=4$.
If $X=\IG(n,2n)$, by Lemma \ref{lem_isotr_grass} $N_\QQ(X)=\emptyset$. 
it is not clear whether $N_\QQ(X)$ is empty or not.
\end{proof}

\begin{table}[]
\centering
\caption{Varieties in $\Omega_1$ for which $N_\PP$ is non-empty}
\label{table_non_extendable_components-linear}
\begin{tabular}{c|c|c|}
Type & $X$  & $\Hilb_{\PP^{m}}(X)$ \\
\hline
 $A_n$ & $G(k,2k+1)$ & $G(k-1,2k+1)$  \\
 $B_n$ & $\OG(k,4k-1)$ & $\OG(k+1,4k-1)$ \\
 $C_n$ & $\IG(k,3k)$ & $\IG(k-1,3k)$  \\
 $C_n$ & $\IG(k,3k-2)$ & $\IG(k+1,3k-2)$ \\
 $D_n$ & $\OG(5,10)_+$ & $\OG(5,10)_-$ \\
 $D_n$ & $\OG(k,4k-2)$ & $\OG(k+1,4k-2)$  \\
 $D_n$ & $\OG(k,4k+2)$ & $\OFl(k-1,2k+1,4k+2)_\pm$  \\
 $E_6$ & $E_6/P_1$ & $E_6/P_2$  \\
 $E_6$ & $E_6/P_3$ & $E_6/P_{1,2}$  \\
 $E_6$ & $E_6/P_4$ & $E_6/P_{2,3}\cup E_6/P_{2,5}$  \\
 $E_7$ & $E_7/P_2$ & $E_7/P_3$  \\
 $E_7$ & $E_7/P_4$ & $E_7/P_{2,3}$  \\
 $E_7$ & $E_7/P_5$ & $E_7/P_{2,6}$  \\
 $E_7$ & $E_7/P_6$ & $E_7/P_{2,7}$  \\
 $E_7$ & $E_7/P_7$ & $E_7/P_2$  \\
 $E_8$ & $E_8/P_5$ & $ E_8/P_{4}\cup E_8/P_{2,6}$  \\
 $E_8$ & $E_8/P_6$ & $E_8/P_{2,7}$  \\
 $E_8$ & $E_8/P_7$ & $E_8/P_{2,8}$  \\
 $E_8$ & $E_8/P_8$ & $E_8/P_2$  \\
 \end{tabular}
 \end{table}

 \begin{lemma}
\label{lem_different_dim}
Let $X=G/P$ be a generalized Grassmannian from $\Omega_1$ (resp.  $\Omega_2$). If $G/Q_1$, $G/Q_2$ are two connected components of $\Hilb_{\PP^{m}}(X)$ (resp. $\Hilb_{\QQ^l}(X)$) of the same dimension, there exists an automorphism of $X$ that exchanges them.
\end{lemma}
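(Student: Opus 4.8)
The plan is to reduce the statement to the action of Dynkin diagram symmetries and then run a finite case analysis. I would start from Theorem~\ref{thm_Demazure}: the identity component $\Aut^0(X)$ is the image of $G$, while $\Aut(X)/\Aut^0(X)$ is identified with the group of symmetries of the Dynkin diagram of $G$ that fix the node(s) defining $X$. Each component $G/Q_i$ of $\Hilb_{\PP^m}(X)$ (resp.\ $\Hilb_{\QQ^l}(X)$) is a single $G$-orbit, hence is preserved by every element of $\Aut^0(X)$; so the action of $\Aut(X)$ on the set of components factors through this finite symmetry group. Exchanging two \emph{distinct} components can therefore only be realized by a nontrivial diagram symmetry fixing the marked node, and the lemma reduces to: whenever two components share the same dimension, such a symmetry interchanges them.

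Next I would exploit the combinatorial description of the components. By Theorem~\ref{thm_subdiagrams_An} and its quadric analogues, each component is a homogeneous space $G/P_{S_i}$ attached to a set $S_i$ of erased nodes, a symmetry $\sigma$ acting by $G/P_{S_i}\mapsto G/P_{\sigma(S_i)}$, and with dimension $|R^+|-|R^+(L_{S_i})|$. The cases in $\Omega_2$ are immediate: the relevant Hilbert schemes $\OG(k-1,2n+1)$, $\IG(n-2,2n)$, $F_4/P_4$ and $E_6/P_6$ are all irreducible, so the statement is vacuous. For $\Omega_1$ I would enumerate the Grassmannians whose variety of maximal linear spaces is reducible; these are exactly those whose marked node carries two or more maximal $A_m$-branches, counting the fork splitting in type $D$.

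These reducible cases fall into two regimes. When $G$ has a nontrivial diagram symmetry fixing the marked node, the equal-dimension components are precisely those it permutes: for $G(k,2k)=A_{2k-1}/P_k$ the components $G(k\pm 1,2k)$ are swapped by $i\mapsto 2k-i$; for $\OG(k,2n)$ with $k<n/2$ the two spinor-type families (such as $\OFl(k-1,2k+1,4k+2)_\pm$ for $\OG(k,4k+2)$) are swapped by the fork involution; for $E_6/P_4$ and for the adjoint variety $E_6/P_2$ the pairs $E_6/P_{2,3}\leftrightarrow E_6/P_{2,5}$ and $E_6/P_{1,5}\leftrightarrow E_6/P_{3,6}$ are swapped by the involution of $E_6$; and triality acts transitively on the three components of $\Hilb_{\PP^2}(D_4/P_2)$. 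When $G$ carries no symmetry fixing the marked node (types $B$, $C$, $E_7$, $E_8$, $F_4$, and the type-$D$ cases whose marked node is moved), I would instead check that the components have pairwise distinct dimensions, so that the hypothesis is never met. Representative verifications: the two families $\IG(k\pm1,2n)$ in $\IG(k,3k-1)$ differ by one in dimension; $\dim E_8/P_4=106\ne 107=\dim E_8/P_{2,6}$ in $E_8/P_5$; and in $\OG(k,4k)=D_{2k}/P_k$ the straight component $D_{2k}/P_{k+1}$ is strictly smaller than the two forked ones, whose mutual coincidence is again resolved by the fork involution. The case $F_4/P_3$ is harmless, since the second component of $\Hilb_{\PP^3}(F_4/P_3)$ is not homogeneous and hence not of the form $G/Q_2$.

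The main obstacle I anticipate is purely bookkeeping: producing a complete, non-redundant list of the Grassmannians in $\Omega_1$ with reducible variety of maximal linear spaces, and then carrying out the dimension (non-)coincidence checks carefully enough that no accidental equal-dimension pair escapes without an accompanying diagram symmetry. The delicate points are the multi-branch type-$D$ Grassmannians, where one must confirm that straight and forked branches never share a dimension unless the fork involution is available, and the exceptional types, where the dimensions have to be computed one node-set at a time.
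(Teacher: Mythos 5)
Your proposal is correct and takes essentially the same route as the paper's proof: reduce via Theorem \ref{thm_Demazure} to the action of Dynkin-diagram symmetries fixing the marked node, then run the same case-by-case enumeration with dimension comparisons (duality for $G(k,2k)$, the fork involution and triality in type $D$, the $E_6$ involution for $E_6/P_2$ and $E_6/P_4$, dimension gaps for $\IG(k\pm 1,3k-1)$ and $E_8/P_5$, connectedness of the quadric Hilbert schemes in $\Omega_2$, and the non-homogeneous component of $\Hilb_{\PP^3}(F_4/P_3)$ falling outside the hypothesis). Your only slip is harmless: the components of $\Hilb_{\PP^m}(E_6/P_2)$ are $E_6/P_3$ and $E_6/P_5$ rather than $E_6/P_{1,5}$ and $E_6/P_{3,6}$ (the latter parametrize the submaximal $\PP^3$'s), but both pairs are interchanged by the $E_6$ involution, so the argument is unaffected.
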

  
Notice that an automorphism that exchanges two connected components of $\Hilb_{\PP^m}(X)$ or $\Hilb_{\QQ^l}(X)$ 
must be induced by an outer automorphism of $G$. And recall that such outer automorphisms are detected by the symmetries of the Dynkin diagram.

\begin{proof} We proceed by a case by case analysis, starting with varieties $X$ from $\Omega_1$. If  $X=G(k,n+1)$, with $2k\leq n+1$, non-extendable linear subspaces have dimensions $n+1-k$ and $k$. When $k=n+1-k$, the two components of $\Hilb_{\PP^m}(X)$ are exchanged by an outer automorphism. 
If $X=\OG(k,2n+1)$ with $k>n-k$, by the proof of Proposition \ref{prop_UE_property_long_case}, $\Hilb_{\PP^m}(X)$ is connected.
If $X=\IG(k,2n)$ with $2\leq k\leq n-1$, then by Lemma \ref{lemma_generic_Sp_grassm_UE} there are two families of non-extendable linear subspaces, of dimensions $k$ and $2n-2k+1$. So let us suppose that $k=2n-2k+1$. The two components of $\Hilb_{\PP^m}(X)$ are then $\IG(k+1,3k-1)$ and $\IG(k-1,3k-1)$, and their dimensions are different. 

If $X=\OG(k,2n)$ with $k\leq n-1$, then by Theorem \ref{thm_subdiagrams_An} there are three families of non-extendable linear subspaces in $X$, of dimensions $k$ and twice $n-k$. Moreover the second and  third families are exchanged by an outer automorphism. If $k=n-k$, the components of $\Hilb_{\PP^m}(X)$ are $\OG(k+1,4k)$ and $\OFl(k-1,2k,4k)_\pm$, and the dimensions coincide only for $k=1,2$. However $k=1$ is excluded since  $D_2=A_1\times A_1$, and for 
$k=2$ we get three isomorphic components exchanged by the outer automorphisms of $D_4$. If $X=\OG(n,2n)$ with $n\geq 4$, there are two families of non-extendable linear subspaces, of dimensions $n-1$ and $3$; these dimensions coincide only for $n=4$, in which case $X=\OG(4,8)$ is just a quadric. 

In the exceptional cases one can use a case by case argument and Theorem \ref{thm_subdiagrams_An} to understand when $\Hilb_{\PP^m}(X)$ is not connected. This happens exactly for $E_6/P_2$ and $E_6/P_4$ (which are preserved by an outer automorphism), and $E_8/P_5$. If $X=E_8/P_5$,  $\Hilb_{\PP^m}(X)$ has two connected components $E_8/P_4$ and $E_8/P_{2,6}$, whose dimensions are respectively $106$ and $107$. 

Finally, for the varieties from $\Omega_2$, one checks in a similar way that $\Hilb_{\QQ^l}(X)$ is always connected.
\end{proof}

\begin{lemma}
\label{lemma_non_empty-linear}
Let $X$ be a generalized Grassmannian from $\Omega_1$ appearing in Table \ref{table_non_extendable_components-linear}. 
Then $\Hilb_{\PP^{m}}(H_x)$ is non-empty for any hyperplane section $H_x\subset X$. 
\end{lemma}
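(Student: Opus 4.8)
The plan is to realize $\Hilb_{\PP^m}(H_x)$, or rather its intersection with a component $G/Q$ of $\Hilb_{\PP^m}(X)$, as the zero locus of a section of an \emph{ample} vector bundle, and then to invoke positivity. On $G/Q=\Hilb_{\PP^m}(X)$ let $\mathcal{W}\subset V\otimes\cO_{G/Q}$ be the tautological rank $m+1$ subbundle whose fiber over a point $[\Lambda]$ is the linear span $W\subset V$ of the linear space $\Lambda=\PP(W)$. Dualizing the inclusion $\mathcal{W}\hookrightarrow V\otimes\cO$ gives a surjection $V^\vee\otimes\cO\twoheadrightarrow\mathcal{W}^\vee$, so each $x\in V^\vee$ determines a global section $s_x$ of $\mathcal{W}^\vee$ with $s_x([\Lambda])=x|_W$. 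By construction $s_x$ vanishes at $[\Lambda]$ precisely when $\Lambda\subset H_x$, so its zero locus is exactly $\Hilb_{\PP^m}(H_x)\cap(G/Q)$. It therefore suffices to show that $s_x$ has a zero for every $x$.

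First I would reduce this to the non-vanishing of a Chern class. If $s_x$ were nowhere vanishing it would define a trivial sub-line-bundle $\cO\hookrightarrow\mathcal{W}^\vee$ with locally free quotient of rank $m$, forcing $c_{m+1}(\mathcal{W}^\vee)=0$. Hence it is enough to prove $c_{m+1}(\mathcal{W}^\vee)\ne 0$ in $\Ho^*(G/Q)$, and this holds as soon as $\mathcal{W}^\vee$ is ample and $\dim(G/Q)\ge m+1$: by the Fulton--Lazarsfeld positivity theorem the top Chern class of an ample bundle of rank not exceeding the dimension is numerically positive, hence nonzero.

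The core of the argument is the ampleness of $\mathcal{W}^\vee$, which I would establish uniformly, independently of the type of $X$. Consider the span morphism $\phi\colon G/Q\to\mathrm{Gr}(m+1,V)$, $[\Lambda]\mapsto W$. It is injective, because a linearly embedded $\PP^m$ equals the projectivization of its own span, so $\Lambda=\PP(W)$ is recovered from $W$; being a morphism of projective varieties it is also proper, hence finite. Moreover $\mathcal{W}=\phi^*\mathcal{R}$, where $\mathcal{R}$ is the tautological subbundle on $\mathrm{Gr}(m+1,V)$, so $\mathcal{W}^\vee=\phi^*(\mathcal{R}^\vee)$. Now $\mathcal{R}^\vee$ is ample: under the canonical isomorphism $\mathrm{Gr}(m+1,V)\cong\mathrm{Gr}(\dim V-m-1,V^\vee)$ sending $W$ to its annihilator $W^\perp$, the bundle $\mathcal{R}^\vee$ is identified with the universal quotient bundle (its fiber over $W^\perp$ being $V^\vee/W^\perp\cong W^\vee$), and the universal quotient bundle is ample on any Grassmannian. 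Since the pullback of an ample bundle under a finite morphism is ample, $\mathcal{W}^\vee$ is ample.

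This settles every entry of Table \ref{table_non_extendable_components-linear} at once, the only ingredient that is not completely formal being the inequality $\dim(G/Q)\ge m+1$. I expect this to be the sole point requiring a case-by-case check, but it is a routine dimension count: in each line of the table $G/Q=\Hilb_{\PP^m}(X)$ is a homogeneous space of dimension far exceeding $m$, so the bound holds with room to spare (and when $\Hilb_{\PP^m}(X)$ is reducible it suffices to run the argument on a single component). The remaining steps---finiteness of the span map, ampleness of the universal quotient bundle, and the positivity of top Chern classes of ample bundles---are general facts that hold verbatim in the classical, spinorial and exceptional embeddings alike, so the genuine work is limited to confirming the dimension inequality and checking that the universal span bundle is a well-defined morphism in the non-classical embeddings.
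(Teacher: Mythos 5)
Your reduction is fine up to a point: realizing $\Hilb_{\PP^m}(H_x)\cap(G/Q)$ as the zero locus of the section $s_x$ of $\mathcal{W}^\vee$, and observing that a nowhere-vanishing section forces $c_{m+1}(\mathcal{W}^\vee)=0$, is correct (and is essentially the same set-up as the paper, which calls this bundle $\cE^\vee$). The gap is the ampleness claim on which everything else rests: the universal quotient bundle on a Grassmannian is \emph{not} ample once its rank is at least two; it is only globally generated (hence nef). Concretely, restrict $\mathcal{W}$ to a line in $G/Q$ consisting of the $\PP^m$'s containing a fixed $\PP^{m-1}$: the span contains a fixed $m$-dimensional subspace, so $\mathcal{W}|_L\cong\cO_L^{\oplus m}\oplus\cO_L(-1)$ and $\mathcal{W}^\vee|_L\cong\cO_L^{\oplus m}\oplus\cO_L(1)$ has trivial summands. (Equivalently, on $\mathrm{Gr}(m+1,V)$ the bundle $\mathcal{R}^\vee$ restricted to a line is $\cO^{\oplus m}\oplus\cO(1)$.) So $\mathcal{W}^\vee$ is never ample in the range $m\ge 1$, Fulton--Lazarsfeld does not apply, and for merely nef bundles the top Chern class can perfectly well vanish.

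This is not a repairable technicality: no uniform positivity argument of this shape can exist, because the statement itself is false outside Table \ref{table_non_extendable_components-linear}. The remark immediately following the lemma in the paper gives the counterexample: for $X=G(2,2n)$ and $H_x$ general, $\Hilb_{\PP^m}(H_x)=\emptyset$. In that case $G/Q=\PP^{2n-1}$, $\mathcal{W}^\vee\cong\Omega^1_{\PP^{2n-1}}(2)$, the span morphism is finite and the dimension bound holds, yet $c_{2n-1}(\Omega^1_{\PP^{2n-1}}(2))=0$ (a general section is a nondegenerate skew form, with empty zero locus). So all of your ``formal'' ingredients are satisfied while the conclusion fails, which shows that the restriction to the varieties of Table \ref{table_non_extendable_components-linear} must enter the proof in an essential, case-dependent way. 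This is exactly what the paper's argument does: it reduces to generic $x$ by semicontinuity, resolves $\cO_{\Hilb_{\PP^m}(H_x)}$ by the Koszul complex of $s_x$, and proves $H^q(G/Q,\wedge^q\cE)=0$ for $q>0$ by the Bott--Borel--Weil theorem, a computation with the weights of $\cE$ that is carried out case by case (and which genuinely fails for $G(2,2n)$). To fix your proof you would have to replace the ampleness step by such a case-by-case verification that $c_{m+1}(\mathcal{W}^\vee)\neq 0$ (e.g.\ via Thom--Porteous or the cohomological argument above), at which point you recover the paper's proof rather than a shortcut around it.
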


\begin{remark}
Notice that this statement cannot be extended to all the generalized Grassmannians. For instance $\Hilb_{\PP^m}(H_x)=\emptyset$ when $H_x$ is a general hyperplane section of $G(2,2n)$.
\end{remark}

\begin{proof}
We first start with a few general remarks. Let us begin with one of the long root spaces in Table 1, say $X=G/P\subset\PP (V)$, where 
$V=V_{\omega_k}$ is some fundamental representation of the simply connected group $G$. Recall that diagrammatically, $X$ corresponds to
a marked Dynkin diagram $(\Delta,\alpha_k)$. Then $\Hilb_{\PP^{m}}(X)$ can be described by 
looking for the subdiagrams of type $(A_m,\alpha_1)$ in this marked diagram. As recalled in Theorem \ref{thm_subdiagrams_An}, the boundary vertices of such 
a subdiagram define a component $G/Q$ of $\Hilb_{\PP^{m}}(X)$. On such a $G/Q$, the category of $G$-equivariant vector bundles is 
equivalent to the category of $Q$-modules \cite{or}. In particular, the minimal $Q$-submodule of the $G$-module $V$ defines a vector bundle $\cE$ on $G/Q$, and the element $x$ of $V^\vee$ that defines $H_x\subset\PP (V)$ also defines a global section of $\cE^\vee$, whose zero locus is precisely $\Hilb_{\PP^{m}}(H_x)$. 
How can we deduce from this description that $\Hilb_{\PP^{m}}(H_x)$ is non empty? First note that by semi-continuity, we just need to prove it 
when $x$ is generic, in which case, since by construction $\cE^\vee$ is globally generated, $\Hilb_{\PP^{m}}(H_x)$ is smooth (and of the expected 
codimension $m+1$ at each of its points, if any). Then we could use the Thom-Porteous formula for its fundamental class and check it is non 
zero in the Chow ring. Or, and that is the approach we will choose, we can use the Koszul complex 
$$  0 \to \wedge^{m+1}\cE  \to \wedge^{m}\cE \to \cdots \to  \cE\to \cO_{G/Q} \to \cO_{\Hilb_{\PP^{m}}(H_x)} \to 0,
$$     
to check that $H^0(\cO_{\Hilb_{\PP^{m}}(H_x)})\ne 0$. Indeed, by a standard diagram chasing it suffices to check that $H^q(G/Q, \wedge^{q}\cE)=0$ 
for any $q>0$. This can readily be deduced from the Bott-Borel-Weil theorem (see e.g. \cite{or})
since $ \wedge^{q}\cE$ is an {\it irreducible} equivariant vector 
bundle for any $q$. Indeed, this follows from the fact that $\cE$, by construction is defined from a representation of $Q$ which is essentially
the natural representation of $SL_{m+1}$; since all the wedge powers of this representation remain irreducible, our claim follows.

We will give more details for the  classical case $X=G(k,2k+1)$, and for the exceptional case $X=E_7/P_3$. The remaining cases can be treated in 
a completely similar way and will be left to the reader. 

\smallskip If $X=G/P=G(k,2k+1)$, then $m=k+1$ and $\Hilb_{\PP^{m}}(X)=G/Q=G(k-1,2k+1)$. Moreover $\cE=\cQ(-1)$ if $\cQ$ denotes the rank $k+2$ 
tautological quotient bundle. This simply yields $ \wedge^{q}\cE=\wedge^{q}\cQ(-q)$. In more Lie theoretic terms we can observe that the 
highest weight of the $Q$-module defining $\cE^\vee$ is $\omega_k$, from which we get the other weights from the action of the Weyl group of 
$Q$, which yields successively $\omega_{k-1}-\omega_{k}+\omega_{k+1}, \ldots \omega_{k-1}-\omega_{2k-1}+\omega_{2k}, \omega_{k-1}-\omega_{2k}$. 
Then we get the weights of $\cE$ by taking their opposites, and we deduce the highest weight of $ \wedge^{q}\cE$ by summing the first $q$,
starting from the highest one: this yields $\omega_{2k+1-q}-q\omega_{k-1}$, for 
$0<q\le k+2$. But then the Bott-Borel-Weil theorem implies that  $ \wedge^{q}\cE$ is acyclic, since for $\rho$ the sum of the 
fundamental weights, $\omega_{2k+1-q}-q\omega_{k-1}+\rho$ is orthogonal to any root $\alpha_i+\cdots +\alpha_j$ for $i\le k-1\le j\le 2k-q$ and 
$j-i+1=q$. Such a root always exists if $q\le k$, while for $q=k+1$ (resp. $q=k+2$) we can choose the root $\alpha_1+\cdots +\alpha_k$ (resp. 
$\alpha_1+\cdots +\alpha_{k+1}$).

\smallskip If $X=G/P=E_7/P_3$, then $m=5$ and $\Hilb_{\PP^{m}}(X)=G/Q=E_7/P_{1,2}$. The highest weight of $\cE^\vee$ is $\omega_3$, and 
as before we get the other weights from the action of the Weyl group of $Q$, which yields successively $\omega_1-\omega_3+\omega_4, 
 \omega_1+\omega_2-\omega_4+\omega_5, \omega_1+\omega_2-\omega_5+\omega_6, \omega_1+\omega_2-\omega_6+\omega_7, \omega_1+\omega_2-\omega_7$. 
 Then we get the weights of $\cE$ by taking their opposites, and we deduce the highest weight of $ \wedge^{q}\cE$ by summing the first $q$,
starting from the highest one: this yields successively $\omega_7-\omega_1-\omega_2, \omega_6-2\omega_1-2\omega_2, \omega_5-3\omega_1-3\omega_2, \omega_4-4\omega_1-4\omega_2, \omega_3-5\omega_1-4\omega_2, -5\omega_1-4\omega_2$. As before, we conclude by the Bott-Borel-Weil that  $ \wedge^{q}\cE$ is acyclic for any $q>0$, since adding $\rho$ we get a weight which is orthogonal respectively to the root $\alpha_1$, $\alpha_1+\alpha_3$, $\alpha_1+\alpha_3
+\alpha_4$, $\alpha_1+\alpha_3+\alpha_4$, $\alpha_1+\alpha_3+\alpha_4+\alpha_5$, $\alpha_1+\alpha_3+\alpha_4+\alpha_5+\alpha_6$. (For another proof, 
more conceptual but that cannot be transposed to all the other cases, one can observe that the projection $E_7/P_{1,2}\to E_7/P_2$ is a projective 
bundle, and that $\cE$ restricts on the fibers of this bundle to $\cQ^\vee(1)$, the twisted dual of the quotient bundle on projective space. 
And since $\cQ^\vee(1)$ and its positive wedge powers are acyclic on projective space, $\cE$  and its positive wedge powers must be acyclic 
as well on $G/Q$.)

\smallskip Let us now turn to the short root cases. We will discuss in some details the case where  $X=\Sp_{3k}/P_k=\IG(k,3k)$, with $k=2h$ even, the other one
being similar. We will suppose that $h\geq 2$, leaving the case of $X=\IG(2,6)$ to the reader. Then $m=k+1$ and $\Hilb_{\PP^m}(X)=\Sp_{3k}/P_{k-1}=\IG(k-1,3k)$. Moreover, for any hyperplane section $H_x$ of $X$, $\Hilb_{\PP^m}(H_x)\subset \Hilb_{\PP^m}(X)$ is the zero locus of a section of the rank $k+2$ vector bundle  $\cE^\vee=(\cU^\perp/\cU)(1)$, where $\cU$ denotes the rank $k-1$ tautological bundle on $\IG(k-1,3k)$ and $\perp$ is taken with respect to the skew-symmetric form fixed by $\Sp_{3k}$. Here the main difference with the long root cases is that the representation that defines the homogeneous bundle $\cU^\perp/\cU$ is essentially the natural representation $M$ of $Sp_{k+2}$, whose wedge powers are {\it not} irreducible. 
Indeed, recall that if $\omega_M$ is the invariant symplectic form on $M$, and $\Omega_M\in\wedge^2M$ the dual bivector, then for any $q>1$ the $q$-th wedge power $\wedge^qM$ contains $\wedge^{q-2}M\wedge \Omega_M$, of which the $q$-th fundamental representation is a supplement, sometimes denoted 
$\wedge^{\langle q\rangle}M$. One deduces inductively the decomposition $\wedge^qM= \wedge^{k+2-q}M=\oplus_{p\ge 0}\wedge^{\langle q-2p\rangle}M$
for $1\le q\le h+1$. 
In terms of our vector bundles, we get 
$$\wedge^q\cE=\bigoplus_{p\ge 0}\wedge^{\langle q-2p\rangle}(\cU^\perp/\cU)(-q)\quad \mathrm{for}\; q\le h+1,$$
$$\wedge^q\cE=\bigoplus_{p\ge 0}\wedge^{\langle k+2-q-2p\rangle}(\cU^\perp/\cU)(-q)\quad \mathrm{for}\; q>h+1.$$
In order to apply the Bott-Borel-Weil theorem, we need to identify the highest weight of each irreducible bundle in these
decompositions. For this we observe that the weights of $\cU^\perp/\cU$ are, 
in decreasing order, $\omega_k-\omega_{k-1}, \ldots , \omega_{3h}-\omega_{3h-1}$ and their opposites. We deduce that for $q\le h+1$,
the highest weight of $\wedge^{\langle q\rangle}(\cU^\perp/\cU)$, or $\wedge^q(\cU^\perp/\cU)$, is the sum of the first $q$
of those weights, that is $\omega_{k+q-1}-\omega_{k-1}$. So the highest weight of $\wedge^{\langle q-2p\rangle}(\cU^\perp/\cU)(-q)$ is 
$\theta=\omega_{k+q-2p-1}-(q+1)\omega_{k-1}$, and then $\theta+\rho$ is orthogonal to the roots $\alpha_i+\cdots +\alpha_j$ when $i\le k-1\le j<k+q-2p-1$
and $j-i=q$, or $i\le k-1\le k+q-2p-1\le j$ and $j-i=q-1$. Similarly, for $q>h+1$ the  highest weight of $\wedge^{\langle k+2-q-2p\rangle}(\cU^\perp/\cU)(-q)$
is $\theta=\omega_{2k+1-q-2p}-(q+1)\omega_{k-1}$, and $\theta+\rho$ is orthogonal to the root $\alpha_{3h-q+1}+\cdots +\alpha_{3h}$. This implies that 
$\wedge^q\cE$ is acyclic for any $q>0$, and we are done. 
\end{proof}

\begin{table}[]
\centering
\caption{Varieties in $\Omega_2$ for which $N_\QQ$ is possibly non-empty}
\label{table_non_extendable_components-quadrics}
\begin{tabular}{c|c|c|c|c}
Type & $X$  & $ \Hilb_{\QQ^{l}}(X)$  \\
\hline
 $B_4$ & $\OG(2,9)$ & $\QQ^7$ \\
 $F_4$ & $F_4/P_1$ &  $F_4/P_4$   \\
 $F_4$ & $F_4/P_4$ &  $E_6/P_6$ \\
 \end{tabular}
 \end{table}
 
\begin{lemma}
\label{lemma_non_empty-quadrics}
Let $X$ be a generalized Grassmannian from $\Omega_2$ appearing in Table \ref{table_non_extendable_components-quadrics}. Then $\Hilb_{\QQ^l}(H_x)$ 
is non-empty for any hyperplane section $H_x\subset X$.
\end{lemma}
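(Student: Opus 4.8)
The plan is to reuse the mechanism of the proof of Lemma~\ref{lemma_non_empty-linear}, replacing linear spaces by quadrics. In each of the three cases $X=\OG(2,9)$, $F_4/P_1$ and $F_4/P_4$, Property (UE) for quadrics holds and the parameter space $\Hilb_{\QQ^l}(X)=G/Q$ is $G$-homogeneous, by Section~\ref{sec_ort_grass_quadrics}, Proposition~\ref{quadrics-F4P1} and Proposition~\ref{prop_UE_quadrics_F4/P4} respectively. On $G/Q$ I would consider the tautological rank $l+2$ bundle $\cE$ whose fiber at a point $[Q]$ is the linear span $\langle Q\rangle$; then $x$ induces a section $s_x$ of $\cE^\vee$ whose zero locus is exactly $\Hilb_{\QQ^l}(H_x)$. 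Since the top Chern class $c_{l+2}(\cE^\vee)$ does not depend on $x$, it suffices to prove that it is non-zero: every section, in particular every $s_x$, would then have to vanish somewhere, so that $\Hilb_{\QQ^l}(H_x)\neq\emptyset$ for \emph{any} $x$. When $\cE$ is homogeneous I would obtain this exactly as before, from the Koszul resolution of $\cO_{\Hilb_{\QQ^l}(H_x)}$ together with the vanishing $H^q(G/Q,\wedge^q\cE)=0$ for all $q>0$, which forces $H^0(\cO_{\Hilb_{\QQ^l}(H_x)})=\CC$ for generic $x$ and hence $c_{l+2}(\cE^\vee)\neq 0$.

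For $X=\OG(2,9)$ one has $l=5$ and $G/Q=\OG(1,9)\simeq\QQ^7$; the span of a maximal quadric is $V_1\wedge V_1^\perp$, so $\cE\simeq\cU\otimes(\cU^\perp/\cU)$ with $\cU$ the tautological line bundle and $\cU^\perp/\cU$ the rank $7$ bundle attached to the vector representation of the Levi $\SO_7$. For $X=F_4/P_1$ one has $l=5$ and $G/Q=F_4/P_4$; the span of a maximal quadric is a $7$-dimensional $P_4$-submodule of $\ff_4=V_{\omega_1}$, and since the Levi $\SO_7$ of $P_4$ has a unique $7$-dimensional irreducible representation, namely the vector representation occurring in the isotropy module $\ff_4/\fp_4\cong\Delta_8\oplus V_7$, the bundle $\cE$ is, up to a character, the homogeneous bundle of $V_7$. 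In both cases the defining representation has irreducible exterior powers, so each $\wedge^q\cE$ is irreducible; I would then compute its highest weight, add $\rho$, exhibit an orthogonal positive root, and conclude $H^q(G/Q,\wedge^q\cE)=0$ for $q>0$ by Bott--Borel--Weil, exactly as in the long-root cases of Lemma~\ref{lemma_non_empty-linear}. (If the span turned out not to be irreducible, I would split $\wedge^q\cE$ into irreducible summands and argue term by term, as in the symplectic case there.) This would dispose of the first two rows of Table~\ref{table_non_extendable_components-quadrics}.

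The delicate case is $X=F_4/P_4$, which I must regard as a hyperplane section of the Cayley plane $E_6/P_1\subset\PP(U_{\omega_1})$, so that $G=E_6$. Here $l=7$ and $G/Q=E_6/P_6$ by Proposition~\ref{prop_UE_quadrics_F4/P4}, but the relevant rank $9$ bundle $\cE$, with fiber $\langle\QQ^7\rangle\subset V_{\omega_4}$, is only $F_4$-equivariant, because $F_4\subset E_6$ does not act transitively on $E_6/P_6$; Bott--Borel--Weil is therefore unavailable. To get around this, I would write $F_4/P_4=E_6/P_1\cap\{x_0=0\}$ for the $F_4$-invariant linear form $x_0$, and introduce the $E_6$-homogeneous rank $10$ bundle $\cF$ on $E_6/P_6$ whose fiber is the span $\langle\QQ^8\rangle$ of the corresponding maximal quadric of $E_6/P_1$ (Proposition~\ref{hilbQ-E6P1}). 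Since $F_4/P_4$ contains no $\QQ^8$, the form $x_0$ is nowhere zero as a section of $\cF^\vee$, and $\cE=\ker(\cF\xrightarrow{x_0}\cO)$. Writing $H_x=E_6/P_1\cap\{x_0=x'=0\}$, a maximal quadric $\QQ^7$ lies in $H_x$ precisely when $x_0$ and $x'$ become proportional on $\langle\QQ^8\rangle$; thus $\Hilb_{\QQ^7}(H_x)$ is the locus where $(x_0,x')\colon\cO^2\to\cF^\vee$ has rank $\le 1$.

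By the Thom--Porteous formula this degeneracy locus represents the class $c_9(\cF^\vee)\in H^{18}(E_6/P_6)$, which is independent of $x'$, so the whole matter reduces to showing $c_9(\cF^\vee)\neq 0$. This is a Chern-class computation in the cohomology of the dual Cayley plane: I would identify $\cF$ with a twist of the normal bundle of $E_6/P_6\subset\PP(U_{\omega_6})$ (as in the remark following Proposition~\ref{hilbQ-E6P1}) and evaluate $c_9$ through the Borel presentation or Schubert calculus, or equivalently verify, by a dimension count on the incidence variety of pairs (quadric, hyperplane), that the locus is non-empty of the expected codimension $9$. Establishing this single non-vanishing is the one real obstacle; once it is in hand, all the entries of Table~\ref{table_non_extendable_components-quadrics} follow, the first two by the Bott--Borel--Weil bookkeeping already performed for linear spaces.
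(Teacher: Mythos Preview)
Your argument for $\OG(2,9)$ and $F_4/P_1$ matches the paper's: both reduce to the Koszul/Bott--Borel--Weil computation of Lemma~\ref{lemma_non_empty-linear}, and the paper simply says so.

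For $F_4/P_4$ you correctly recognise the essential difficulty---$\Hilb_{\QQ^7}(H_x)$ is not the zero locus of a single section---and your reformulation as the rank~$\le 1$ degeneracy locus of the pair $(x_0,x')$ acting on the rank~$10$ span bundle on $E_6/P_6$ is exactly the paper's setup (your $\cF$ is the paper's $\cS$). The divergence is only in the final step. You invoke Thom--Porteous, reduce to $c_9(\cF^\vee)\neq 0$, and then flag this as ``the one real obstacle'' to be settled by Schubert calculus or an incidence-variety count. The paper instead resolves $\cO_{\Hilb_{\QQ^7}(H_x)}$ (for generic sections) by the Eagon--Northcott complex
\[
0\to(\wedge^{10}\cS^\vee)^{\oplus 9}\to\cdots\to(\wedge^{i}\cS^\vee)^{\oplus(i-1)}\to\cdots\to\wedge^2\cS^\vee\to\cO_{E_6/P_6}\to\cO_{\Hilb_{\QQ^7}(H_x)}\to 0,
\]
observes that each $\wedge^i\cS^\vee$ is an irreducible homogeneous bundle on $E_6/P_6$, and checks acyclicity in positive degree via Bott--Borel--Weil exactly as in the linear cases. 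This yields $H^0(\cO_{\Hilb_{\QQ^7}(H_x)})=\CC$ directly, hence non-emptiness for generic $x$ and then for all $x$ by semicontinuity. The gain is uniformity: the obstacle you isolate dissolves into the same BBW bookkeeping already in hand for the other rows, with no separate Chern-class or Schubert computation required.
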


\begin{proof} 
The proof is similar to that of the previous Lemma, except for $X=F_4/P_4$. Indeed this is the only case for which $\Hilb_{\QQ^l}(H_x)$ is not the zero locus of a section of a vector bundle inside $\Hilb_{\QQ^l}(X)$. Instead, recall that seven dimensional quadrics inside $F_4/P_4$ are obtained from eight dimensional quadrics $Q_p$ inside $E_6/P_1$ by intersecting with the hyperplane $H_0$ such that $F_4/P_4=E_6/P_1\cap H_0$. Moreover $Q_p=\PP(\cS_p)\cap (E_6/P_1)$, where $\cS$ is the vector bundle of rank ten on $\Hilb_{\QQ^8}(E_6/P_1)=E_6/P_6$ defined by the weight $\omega_1$. 

So let us consider another general hyperplane section $H_1\subset E_6/P_1$, and $H_x:=H_0\cap H_1\subset F_4/P_4$. The double hyperplane section $H_x$ contains $Q_p\cap H_0$ if and only if the equations of $H_0$ and $H_1$ are proportional when restricted to $\PP(\cS_p)$. If $H_0$ and $H_1$ are defined by sections $x_0$ and $x_1$ in $\Ho^0(E_6/P_6,\cS)$, this means that $\Hilb_{\QQ^7}(H_x)$ 
is the degeneracy locus defined by these two sections. These sections being general, 
its structure sheaf is   resolved by the  Eagon-Northcott complex
$$  0 \to (\wedge^{10}\cS^\vee)^{\oplus 9} \to \cdots \to (\wedge^{i}\cS^\vee)^{\oplus i-1} \to \cdots \to \wedge^2\cS^\vee \to \cO_{E_6/P_6} \to \cO_{\Hilb_{\QQ^7}(H_x)} \to 0.
$$     
Each bundle $\wedge^{i}\cS$ is irreducible, and using the Bott-Borel-Weil theorem as before we deduce that $\Ho^0( \cO_{\Hilb_{\QQ^7}(H_x)})= \CC$. As a consequence, $\Hilb_{\QQ^7}(H_x)\neq \emptyset$.
\end{proof}

\section{Jordan algebras and automorphisms}
\label{sec_type_Jordan}

In this section we focus our attention on some special generalized Grassmannians, for which our general strategy is not conclusive, and we will explain why. These are the symplectic Grassmannians $\IG(2,2n)$ and the exceptional variety $F_4/P_4$. However we will also include in our analysis $v_2(\QQ^{n-2})$ (the second quadratic Veronese embedding of a smooth quadric) and the adjoint
variety $X_{\ad}(\fsl_n)=\Fl(1,n-1,n)$, which is not strictly speaking a generalized Grassmannian: all of them can be described in a uniform way.
Indeed, the common property of all these varieties is that they  can be seen as
hyperplane sections of some bigger homogeneous varieties, that turn out to appear naturally in the context of Jordan algebras. 

Notice moreover that these varieties are the coadjoint varieties of the corresponding groups (in type $A$, coadjoint and adjoint coincide). We recall that coadjoint varieties are obtained in the following way: take the marked Dynkin diagram of an adjoint variety and reverse the double (or triple) arrows; the marked Dynkin diagram thus obtained is the coadjoint variety of the reversed Dynkin diagram. Thus, adjoint and coadjoint varieties coincide in the simply laced case, while in the non-simply laced case we obtain the above mentioned varieties (plus the coadjoint of $G_2$, which is just $\QQ^5$).

\subsection{Hermitian Jordan algebras}

Let us start by recalling a few classical facts about Jordan algebras, and in particular simple complex Jordan algebras coming from Hermitian matrices; we refer to \cite{Springer_Jordan_Algebras} for details.

Let $\mathbf{A}$ be a real finite dimensional normed algebra, so that $\mathbf{A}$ is either $\mathbf{R}, \mathbf{C}, \mathbf{H}$ 
or $\mathbf{O}$ and admits 
a natural conjugation. We will denote by $\AA := \mathbf{A} \otimes_{\RR}\CC$ the complexification of $\mathbf{A}$. Then we will 
consider the space of Hermitian $n\times n$-matrices
\[
\cJ_n(\AA):= \{M\in \cM_{n}(\AA) \mid M^*=M\},
\]
which we will denote simply by $\cJ_n$ if no confusion can arise. Here $M^*$ denotes the transpose conjugate of $M$. We will always assume that $n\geq 3$, and when $\AA=\OO$ we will further restrict to the case $n=3$. These spaces of matrices get a Jordan algebra structure by considering the symmetrized product of matrices. 

On Jordan algebras there exists a rational function $j:\cJ_n \dashrightarrow \cJ_n$ which allows to define the structure group  $$\cG:=\{g\in \GL(\cJ_n) \mid \exists h\in \GL(\cJ_n), g\circ j = j\circ h\}.$$ Actually, a Jordan structure on a vector space is defined starting from such a rational function, which is required to satisfy some additional properties. The group $\cG$ fixes a well-defined degree-$n$ \emph{norm} on $\cJ_n$, which we denote by $\det$. Every element $e\in \cJ_n $ such that $\det(e)\neq 0$ belongs to the $\cG$-orbit of the identity matrix $\mathbbm{1}$. We will denote by $G$ the connected component of $ \Stab_{\cG}(e)$ (one should think of 
$e$ as \emph{the} identity inside $\cJ_n$; when $e=\mathbbm{1}$, $G$ is essentially the automorphism group of the Jordan algebra). Both $\cG$ and $G$ are reductive Lie groups.

\begin{remark}
Each $x\in \cJ_n$ 
satisfies an equation  $x^n-t(x)x^{n-1}+ \cdots\pm\det(x)\mathbbm{1}=0$. So when $\det(x)\neq 0$ one defines the Jordan structure $j$ by the formula $j(x)=x^{-1}= (x^{n-1}-t(x)x^{n-1}+\cdots )/\det(x)$. When
$\mathbf{A}$ is associative this coincides with the usual matrix inverse.
\end{remark}

The minimal $\cG$-orbit inside $\PP(\cJ_n)$ is a homogeneous projective variety $\cX$, sometimes called a Scorza variety
\cite{zak, chaput}. Moreover one can view $e$ as a general section of $\cO_{\cX}(1)$ since $\Ho^0(\cX, \cO_{\cX}(1))\cong \cJ_n^\vee \cong \cJ_n$ (a natural duality is obtained by polarizing the determinant at the identity, which yields the 
non degenerate quadratic form $\det(e,\ldots,e, -, -)$). The zero locus $X\subset \cX$ of $e$ is then a $G$-homogeneous variety, more precisely it is one of the varieties mentioned in the introduction to this section; it is embedded inside the projectivization of $\cJ_n^0=e^\perp \cong \cJ_n/\CC e$. Table \ref{table_jordan:algebras} contains explicit information about $\cJ_n$, $\cG$, $\cX$, $G$, $X$. We denoted by  $\cS_n$ (respectively $\cA_{2n}$) the space of symmetric (resp. skew-symmetric) matrices.

\begin{table}[]
\centering
\caption{Hermitian Jordan algebras and their geometry}
\label{table_jordan:algebras}
\begin{tabular}{c|c|c|c|c|c}
$\AA$ & $\cJ_n(\AA)$  & $\cG/Z(\cG)$ & $\cX$ & $G$ & $X$ \\
\hline
 $\RR$ & $\cS_n$ & $\PGL_n$ & $v_2(\PP^{n-1})$ & $\SO_n$ & $v_2(\QQ^{n-2})$ \\
 $\CC$ & $\cM_n$ & $\PGL_n\times \PGL_n$ & $\PP^{n-1}\times \PP^{n-1}$ & $\PGL_n$ & $\Fl(1,n-1,n)$ \\ 
 $\HH$ & $\cA_{2n}$ & $\PGL_{2n}$ & $G(2,2n)$ & $\Sp_{2n}$ & $\IG(2,2n)$ \\ 
 $\OO$ & $\cJ_{3}(\OO)$ & $E_6$ & $E_6/P_1$ & $F_4$ & $F_4/P_4$  \end{tabular}
 \end{table}
 
 An important consequence of this construction is that 
 a hyperplane section $H_x\subset X$ can be seen as the double hyperplane section inside $\cX$ defined by the pencil $L_x:=\PP(\langle e,x \rangle)\cong \PP^1\subset \PP(\cJ_n)$.  We will start by giving a simple 
 criterion for the smoothness of $H_x$, but before doing so, let us briefly describe more concretely the geometry behind Table \ref{table_jordan:algebras} when $\mathbf{A}=\mathbf{R},\mathbf{C}$ or $\mathbf{H}$:
\begin{itemize}
\item[$\mathbf{A}=\mathbf{R}$] The Jordan algebra $\cJ_n(\RR)$ is the space of symmetric matrices $\cS_n$. The group $\cG/Z(\cG)$ is $\PGL_n$, acting by congruence on $\cS_n$. As a generic element $e\in \cS_n$ one can take the identity matrix; then $\Stab_\cG^0(e)$ is $\SO_n$. The minimal $\cG$-orbit inside $\PP(\cS_n)$ is the space of rank-one symmetric matrices, i.e. the second Veronese embedding $v_2(\PP^{n-1})$, and its hyperplane section is $v_2(\QQ^{n-2})$; $H_x$ is thus the intersection of two quadrics. The function $\det$ is just the determinant;
\item[$\mathbf{A}=\mathbf{C}$] The Jordan algebra $\cJ_n(\CC)$ is the space of matrices $\cM_n$. The group $\cG/Z(\cG)$ is $\PGL_n\times \PGL_n$ with its natural action on $\cM_n$ (the first $\PGL_n$ acts on the left on $\cM_n$ and the second $\PGL_n$ on the right). If $e\in \cM_n$ is the identity matrix then $\Stab_\cG^0(e)$ is $\PGL_n$, embedded in $\PGL_n\times \PGL_n$ by $M \mapsto (M,M^{-1})$. The minimal $\cG$-orbit inside $\PP(\cM_n)$ is the space of rank-one matrices, i.e. $\PP^{n-1}\times \PP^{n-1}$, and its hyperplane section is $\Fl(1,n-1,n)$; $H_x$ is thus the intersection of two $(1,1)$-divisors in $\PP^{n-1}\times \PP^{n-1}$. The function $\det$ is just the determinant;
\item[$\mathbf{A}=\mathbf{H}$] The Jordan algebra $\cJ_n(\HH)$ is the space of skew-symmetric matrices $\cA_{2n}$. The group $\cG/Z(\cG)$ is $\PGL_{2n}$, acting by congruence on $\cA_{2n}$. If $e\in \cA_{2n}$ is non-degenerate, then $\Stab_\cG^0(e)$ is isomorphic to $\Sp_{2n}$. The minimal $\cG$-orbit inside $\PP(\cA_{2n})$ is the space of rank-two skew-symmetric matrices, i.e. the Grassmannian $G(2,2n)$, and its hyperplane section is the isotropic Grassmannian $\IG(2,2n)$; $H_x$ is thus a bisymplectic Grassmannian of planes (as it was defined in \cite{vlad}). The function $\det$ is just the pfaffian.
\end{itemize}

\subsection{Smoothness of (double) hyperplane sections}

We have recalled that as soon as $\det(e)\neq 0$, the zero locus $X$ of $e$ is a $G$-homogeneous projective variety,
in particular smooth. As a matter of fact this is also a necessary condition for smoothness. Indeed, let us denote 
\[
i(e):= \det(e,\ldots,e, -)\in \cJ_n^\vee \cong \cJ_n.
\]
Let us suppose that $\det(e)=0$ and $e$ is generic with this property; then $i(e)\in e^\perp$, and $i(e)$ belongs to $\cX$
(this is a generalization of the comatrix, see \cite{zak}). Moreover one can check that $i(e)$ is actually a singular point (in fact
the only singular point) of the zero locus $X$ of $e$. Thus the hypersurface $\{\det=0\}$ is the dual variety of $\cX$ (a classical fact for determinantal varieties). The natural generalization to double hyperplane sections is the following result, where we suppose that $\det(e)\neq 0$.

\begin{prop}
\label{prop_crit_smooth_double_hyperplane}
The double hyperplane section $H_x$ associated to the pencil $L_x$ is smooth if and only if the intersection of $L_x$ and $\cX^\vee=\{ \det=0 \}$ is transverse, i.e. if and only if $L_x\cap \cX^\vee$ consists of exactly $n$ distinct points.
\end{prop}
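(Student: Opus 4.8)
The plan is to realise $H_x$ as a codimension-two linear section of $\cX$ and to read its smoothness off the projective dual $\cX^\vee=\{\det=0\}$ by reflexivity. Write $\Lambda\subset\PP(\cJ_n)$ for the codimension-two subspace cut out by the pencil $L_x=\langle e,x\rangle\subset\PP(\cJ_n^\vee)$ (using the self-duality $\cJ_n\cong\cJ_n^\vee$ furnished by the polarized determinant at $e$); then $H_x=X\cap\{x=0\}=\cX\cap\Lambda$. First I would record two elementary reductions. Since $\det$ has degree $n$ and $\det(e)\neq 0$, the pencil $L_x$ is not contained in $\cX^\vee$, so $L_x\cap\cX^\vee$ is the length-$n$ zero scheme of the binary form $\det(\lambda e+\mu x)$. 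Moreover a line through a singular point of a hypersurface meets it there with multiplicity at least two (the first derivative of $\det$ along the line vanishes), so \emph{$L_x\cap\cX^\vee$ consists of $n$ distinct points} is equivalent to \emph{$L_x$ is transverse to $\cX^\vee$}, each intersection point being then a smooth point of $\cX^\vee$ at which $L_x$ is not tangent. This justifies the two phrasings in the statement.

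Next I would translate singularity of $H_x$ into a tangency condition, using that both $\cX$ and $X$ are smooth. For $p\in H_x$ one has $e(p)=x(p)=0$ and, since $X=\cX\cap\{e=0\}$ is smooth, $\mathrm{d}_pe$ is nonzero on the (projective) tangent space $T_p\cX$; thus $H_x$ is singular at $p$ exactly when $\mathrm{d}_px$ is proportional to $\mathrm{d}_pe$ on $T_p\cX$, i.e. when some $H=\lambda e+\mu x\in L_x$ (with $(\lambda,\mu)\neq 0$) satisfies $T_p\cX\subseteq\{H=0\}$. In other words, the singular points of $H_x$ are precisely the pairs $(p,H)$ of the conormal variety $\cC(\cX)\subset\PP(\cJ_n)\times\PP(\cJ_n^\vee)$ with $H\in L_x$ and contact point $p\in\Lambda$.

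The heart of the matter is biduality, $\cC(\cX)=\cC(\cX^\vee)$. At a \emph{smooth} point $H$ of the hypersurface $\cX^\vee$ the contact point $p$ is unique, and reflexivity identifies the embedded tangent hyperplane $T_H\cX^\vee$ with $p^\perp$, the hyperplane of forms vanishing at $p$. Hence $p\in\Lambda$ is equivalent to $L_x\subseteq p^\perp=T_H\cX^\vee$, i.e. to $L_x$ being tangent to $\cX^\vee$ at $H$. This already yields the equivalence over the smooth locus of $\cX^\vee$: $H_x$ acquires a singular point whose tangent hyperplane is a smooth point of $\cX^\vee$ if and only if $L_x$ fails to be transverse there. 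Combined with the first paragraph this settles both implications as long as all the relevant points of $L_x\cap\cX^\vee$ are smooth points of $\cX^\vee$.

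The main obstacle is thus the locus $\mathrm{Sing}(\cX^\vee)$ of corank-$\ge 2$ elements, which a non-transverse pencil may meet: when $H\in L_x\cap\mathrm{Sing}(\cX^\vee)$ I must still exhibit a genuine singular point of $H_x$. Here the contact locus of $H$ in $\cX$ is positive-dimensional (for $\cJ_n(\RR)=\cS_n$ it is the Veronese of rank-one matrices $vv^{\ast}$ with $v$ in the at-least-two-dimensional kernel of $H$), and the task is to show that it meets the codimension-two space $\Lambda$. I expect to handle this by a dimension count on the contact locus, reducing in each of the four cases of Table \ref{table_jordan:algebras} to finding a vector isotropic for the polarized determinant inside a kernel of dimension at least two, which always exists over $\CC$. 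Since such an $H$ corresponds exactly to a repeated root of $\det(\lambda e+\mu x)$, this simultaneously produces the singularity of $H_x$ and matches the failure of transversality, completing the equivalence.
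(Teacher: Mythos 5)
Your proposal is correct and follows essentially the same route as the paper's proof: both identify the singular points of $H_x$ with contact points of hyperplanes $H\in L_x$ tangent to $\cX$ (your observation about $d_pe$ and $d_px$ is exactly the paper's remark that a point of $H_x$ is singular if and only if it is singular in the zero locus of some member of the pencil), and both then use biduality to convert this into tangency of $L_x$ with $\cX^\vee$ --- the paper does this concretely via the comatrix $i(y)=\det(y,\ldots,y,-)$, which is precisely the unique contact point at a smooth point $[y]$ of $\cX^\vee$, where you invoke the conormal variety abstractly. The one place where you go beyond the paper is the case $H\in L_x\cap \mathrm{Sing}(\cX^\vee)$: the paper's argument silently skips it (the point $[i(y)]$ is not even defined there, since $i(y)=0$ as soon as the corank is at least two), whereas you flag it and sketch a fix. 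Your sketch is workable, but it can be closed more cheaply than by hunting for isotropic vectors case by case: in each of the four cases the contact locus of such an $H$ is a \emph{positive-dimensional} projective variety --- $v_2(\PP(\ker H))$ for $\cS_n$, $\PP(\ker H^t)\times\PP(\ker H)$ for $\cM_n$, $G(2,\ker H)$ for $\cA_{2n}$, and an eight-dimensional quadric for $\cJ_3(\OO)$ --- hence it automatically meets the hyperplane $\{e=0\}$; and since $H\in\langle e,x\rangle$ is not proportional to $e$ (because $\det(e)\neq 0$), any point of that intersection already lies in $\Lambda$ and is therefore a singular point of $H_x$. (Note also that in the symmetric case the relevant quadratic form on $\ker H$ is $v\mapsto v^tev$, that is $e$ itself, rather than the polarized determinant; the existence of an isotropic vector over $\CC$ is the same.) With that step completed, your argument is not only correct but slightly more careful than the one printed in the paper.
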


\begin{proof}
The pencil $L_x$ is non transverse to $\cX$ at $[y]=[ae+bx]$, $b\neq 0$ if and only if $\det(y)=\det(y,\ldots,y,e)=0$. This happens if and only if $[i(y)]$ is a singular point of the zero locus of $y$ inside $\cX$ (corresponding to the condition $\det(y)=0$, as we have seen before) and $[i(y)]\in X$ (corresponding to $\det(y,\ldots,y,e)=0$). In order to conclude, observe that a point of $H_x$ is singular if and only if it is singular as a point of the zero locus inside $\cX$ of a certain hyperplane section of the pencil $L_x$.
\end{proof}

Note that in the case of the intersection of two quadrics, there is a natural double cover of $L_x$ branched over $L_x\cap \cX^\vee$, giving 
a hyperelliptic curve $C$ whose Jacobian is the intermediate Jacobian of $H_x$ \cite{reid}.

\smallskip
Let us finally state a useful consequence of the previous statement.

\begin{lemma}
\label{lem_diagonalization}
If $H_x$ is smooth, there exist scalars $x_1,\ldots, x_n$, with  $x_i\neq x_j$ for $i\ne j$, 
such that the pencil $L_x$ belongs to the  $\cG$-orbit of $\PP(\langle \mathbbm{1}, \diag(x_1,\ldots,x_n)\rangle)$.
\end{lemma}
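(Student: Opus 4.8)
The plan is to translate the smoothness condition into a statement about the eigenvalues of a single Jordan algebra element, and then to diagonalize that element by the action of $G$, using the classical transitivity of the automorphism group on Jordan frames. First I would use the fact recalled above that every $e$ with $\det(e)\neq 0$ lies in the $\cG$-orbit of $\mathbbm{1}$: after applying such a $\cG$-element we may assume $e=\mathbbm{1}$, so that $L_x=\PP(\langle \mathbbm{1}, x\rangle)$ for some $x\in\cJ_n$. Since $\det(\mathbbm{1})\neq 0$, the point $[\mathbbm{1}]$ does not lie on $\cX^\vee=\{\det=0\}$, and the restriction of $\det$ to $L_x$ is the binary form $(a,b)\mapsto\det(a\mathbbm{1}+bx)$ of degree $n$, whose $a^n$-coefficient is $\det(\mathbbm{1})\neq 0$; its roots are exactly the points $[-x_i:1]$, where the $x_i$ are the roots of the characteristic polynomial $p_x(\mu)=\det(\mu\mathbbm{1}-x)$. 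By Proposition \ref{prop_crit_smooth_double_hyperplane}, smoothness of $H_x$ is equivalent to $L_x\cap\cX^\vee$ consisting of $n$ distinct points, hence to $p_x$ having $n$ pairwise distinct roots $x_1,\dots,x_n$.

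Next I would produce a spectral decomposition of $x$. Since $\cJ_n$ has rank $n$ and $p_x$ is separable of degree $n$, the commutative associative subalgebra $\CC[x]\subset\cJ_n$ generated by $\mathbbm{1}$ and $x$ is split \'etale; concretely, the Lagrange interpolation elements $c_i=\prod_{j\neq i}(x-x_j\mathbbm{1})/(x_i-x_j)$, computed inside $\CC[x]$, satisfy $c_ic_j=\delta_{ij}c_i$, $\sum_i c_i=\mathbbm{1}$ and $x=\sum_i x_ic_i$. Because the $x_i$ are distinct and their number equals the rank, the idempotents $c_1,\dots,c_n$ are primitive, so $(c_1,\dots,c_n)$ is a Jordan frame, and $\diag(x_1,\dots,x_n)=\sum_i x_i e_i$ is the element attached in the same way to the standard diagonal frame $(e_1,\dots,e_n)$.

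Finally I would invoke the classical fact that the automorphism group $G=\Stab^0_{\cG}(\mathbbm{1})$ acts transitively on Jordan frames \cite{Springer_Jordan_Algebras}; this can also be verified case by case from Table \ref{table_jordan:algebras} (congruence by $\SO_n$, conjugation by $\PGL_n$, symplectic congruence by $\Sp_{2n}$, and the $F_4$-action on the frames of $\cJ_3(\OO)$). Choosing $g\in G$ mapping the set $\{c_i\}$ onto $\{e_i\}$, we obtain $g\cdot\mathbbm{1}=\mathbbm{1}$ and $g\cdot x=\diag(x_{\sigma(1)},\dots,x_{\sigma(n)})$ for some permutation $\sigma$, whose entries are still pairwise distinct; hence $g\cdot L_x=\PP(\langle \mathbbm{1},\diag(x_1,\dots,x_n)\rangle)$, as desired. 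The main obstacle is precisely this diagonalization step in the complexified, non-formally-real setting: over $\CC$ a general element of $\cJ_n$ need not be semisimple, so one must first exploit the distinct-root condition to ensure that $x$ is regular semisimple (guaranteeing that the $c_i$ exist and are primitive), after which the transitivity of $G$ on frames is standard.
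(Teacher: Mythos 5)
Your proof is correct, and it takes a genuinely different route from the paper's. Both arguments begin identically: smoothness of $H_x$, via Proposition \ref{prop_crit_smooth_double_hyperplane}, forces the characteristic polynomial of $x$ to have $n$ distinct roots. But from that point the paper simply delegates: it declares the diagonalization classical for $\AA=\RR$ and $\AA=\CC$, and cites \cite[Theorem 3.1]{Kuznetsov_kuchle_varieties} for $\AA=\HH$ (pencils of skew-symmetric forms) and \cite[Theorem 0.2]{Nishio_Yasukura_orbit_Jordan_matrix_algebra} for $\AA=\OO$, so its proof is case-by-case and rests on two external results specifically tailored to those algebras. You instead give a single uniform Jordan-theoretic argument: separability of the degree-$n$ characteristic polynomial makes $\CC[x]$ a split \'etale algebra of dimension $n$, the Lagrange idempotents $c_i$ then form a complete system of orthogonal primitive idempotents (nonzero since they are a basis of $\CC[x]$, primitive since their ranks are positive and sum to $n$), and conjugacy of frames under $G=\Stab^0_{\cG}(\mathbbm{1})$, which holds over $\CC$ and can be checked on each line of Table \ref{table_jordan:algebras}, carries $(c_i)$ to the standard diagonal frame while fixing $\mathbbm{1}$. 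What your approach buys is uniformity and self-containedness, at the cost of invoking two standard facts from Jordan theory (the minimal polynomial and the generic minimal polynomial of $x$ have the same roots, so separability of the latter forces $\dim\CC[x]=n$; and transitivity of the automorphism group on frames, for which \cite{Springer_Jordan_Algebras} is an appropriate reference); the paper's version is shorter but less transparent about why the four cases admit a common statement. Your remark that one must first secure regular semisimplicity of $x$ before interpolating is exactly the right point of caution in the complexified, non-formally-real setting.
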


\begin{proof}
If $H_x$ is smooth then the characteristic polynomial of $x$ has $n$ distinct roots. The statement is then classical
when $\AA=\RR$ or $\AA=\CC$, and it is a direct consequence of \cite[Theorem 3.1]{Kuznetsov_kuchle_varieties}  when 
 $\AA=\HH$ and \cite[Theorem 0.2]{Nishio_Yasukura_orbit_Jordan_matrix_algebra} when  $\AA=\OO$.
\end{proof}

\subsection*{Automorphisms}

Let us now consider the automorphism group of a smooth (double) hyperplane section $H_x$. The first observation is that we can characterize $\Aut(H_x)$ as follows:

\begin{prop}
$\Aut(H_x)=\Stab_{\Aut(\cX)}(L_x)\subset \Aut(\cX)$.
\end{prop}

\begin{proof}
The equality is a consequence of the inclusion $\Aut(H_x)\subset \Aut(\cX)$. The inclusion is a classical result when $H_x\subset \Fl(1,n,n+1)\subset \PP^{n-1}\times \PP^{n-1}$ and $H_x\subset v_2(\QQ^{n-2})\subset v_2(\PP^{n-1})$. Finally, in Section \ref{sec_type_C} we showed that $\Aut(H_x)\subset \PGL_{2n}$ when $H_x\subset  \IG(2,2n) \subset G(2,2n)$, while in Section \ref{sec_more_on_type_F} we showed that $\Aut(H_x)\subset E_6$ when $H_x\subset  F_4/P_4 \subset E_6/P_1$.
\end{proof}

We deduce the short exact sequence of groups
\[
1\to K \to \Aut(H_x) \to I \to 1,
\]
where the second map is the natural restriction from $\Aut(H_x)$ to $\Aut(L_x)=\PGL_2$, with image $I\subset \PGL_2$. Since $\id\in \Aut(L_x)$ fixes $\CC e$, the group $K$ is contained inside $\Aut(X)$. Moreover since $\cG$ stabilizes $\det$, the group $I$ permutes the $n$ distinct points given by $L_x\cap \cX^\vee$ (see Proposition \ref{prop_crit_smooth_double_hyperplane}). Since we assumed that $n\geq 3$, $I$ is a subgroup of the permutation group $\fS_n$ and thus a finite subgroup of $\PGL_2$.

\begin{lemma}
$I=\{g\in \PGL(L_x) \mid g(L_x\cap \cX^\vee)=L_x\cap \cX^\vee\}$.
\end{lemma}

\begin{proof}
By Lemma \ref{lem_diagonalization}, if $H_x$ is smooth we can assume that $e=\diag(1,\ldots,1)$ and $x=\diag(x_1,\ldots,x_n)$. Let $e_1,\ldots,e_n$ be the standard basis on $\AA_\CC^n$ and fix $\sigma\in \fS_n$. Let $\lambda=(\lambda_1,\ldots,\lambda_n)\in \CC^n$ and denote by $g_\sigma(\lambda)\in\cM_n(\AA_\CC)$ the linear map sending $e_i$ to $\lambda_i e_{\sigma(i)}$. The endomorphism $\hat{g}_\sigma(\lambda)$ of $\cM_n(\AA_\CC)$ sending $M$ to 
$g_\sigma(\lambda)  M  g_\sigma(\lambda)^*$, belongs to $\cG$, and it stabilizes $L_x$ if and only if there exist scalars $\alpha,\beta,\gamma,\delta$ such that for all $i$,
\[
\lambda_i^2=\alpha +\beta x_{\sigma(i)} \qquad \mathrm{and} \qquad x_i\lambda_i^2=\gamma +\delta x_{\sigma(i)}.
\]
Indeed, $\hat{g}_\sigma(\lambda)(e)$ is the diagonal matrix whose $\sigma(i)$-th entry is $\lambda_i^2$, and $\hat{g}_\sigma(\lambda)(x)$ is the diagonal matrix whose $\sigma(i)$-th entry is $x_i\lambda_i^2$, and we are requiring that both belong to $L_x$. Moreover, if this happens, then the restriction of $\hat{g}_\sigma(\lambda)$ to $\PGL(L_x)$ acts on $L_x\cap \cX^\vee$ as the permutation $\sigma$. Vice versa, if $\alpha,\beta,\gamma,\delta$ are the coefficients of an element in $\PGL(L_x)$ acting as $\sigma$ on $L_x\cap \cX^\vee$, define $\lambda$ by the property that for any $i$, $\lambda_i^2=\alpha +\beta x_{\sigma(i)}$; then $\hat{g}_\sigma(\lambda)|_{L_x}$ belongs to $I$.
\end{proof}

The finite subgroups of $\PGL_2$ are well-known: recall they are the cyclic groups $\ZZ_d$, the dihedral groups $\fD_{2m}$, the tetrahedral group $A_4$, the octahedral group $\fS_4$, and the icosahedral group $A_5$. Therefore $I$ is one of these groups. 
Note moreover that among those, only the cyclic groups admit a fixed point inside $\PP^1$.
On the one hand, if $I$ fixes $\CC e$, then $I$ is cyclic  and in such a situation $\Aut(H_x)=\Stab_{\Aut(X)}(\CC x)$ is contained 
in $\Aut(X)$. On the other hand, if $I$ is not cyclic, then $\Aut(H_x)$ is not contained inside any copy of $\Aut(X)$. We summarize our discussion as follows:

\begin{theorem}
\label{thm_exact_sequence_aut_jordan}
Let us suppose that $H_x$ is smooth, or equivalently that $L_x\cap \cX^\vee$ consists of $n$ distinct points. Then there is  an exact sequence 
\[
1\to \Stab_{\Aut(X)}(x) \to \Aut(H_x)\to I \to 1,
\]
where $I=\{g\in \PGL(L_x) \mid g(L_x\cap \cX^\vee)=L_x\cap \cX^\vee\}\subset \fS_n$ is isomorphic to one of the groups $\ZZ_d,\fD_{2m},A_4,\fS_4,A_5$. 
\end{theorem}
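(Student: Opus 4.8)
The plan is to obtain the exact sequence directly from the homomorphism that restricts an automorphism to the pencil $L_x$, treating the kernel and the image separately and then quoting the classification of finite subgroups of $\PGL_2$. First I would set up the backbone. By the Proposition identifying $\Aut(H_x)$ with $\Stab_{\Aut(\cX)}(L_x)$, every automorphism of $H_x$ extends to an automorphism of $\cX$ preserving the pencil $L_x\cong\PP^1$, and therefore acts linearly on it. This yields a homomorphism $r\colon \Aut(H_x)\to \PGL(L_x)=\PGL_2$; writing $I:=\operatorname{im}(r)$ and $K:=\ker(r)$ gives the tautologically exact sequence $1\to K\to \Aut(H_x)\xrightarrow{r} I\to 1$. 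Everything then reduces to identifying $K$ and $I$.

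For the kernel, an element of $K$ acts trivially on all of $L_x$, so after normalizing its lift it fixes the two forms $e$ and $x$ simultaneously. Fixing $e$ forces preservation of $X=\cX\cap\{e=0\}$, whence $K\subset\Aut(X)$ (as already observed, using $\Aut(X)\subset\Aut(\cX)$); and since in each of the cases at hand $\Aut(X)$ already fixes the Jordan identity $e$, the only remaining constraint is the fixing of $x$, so that $K=\Stab_{\Aut(X)}(x)$. Conversely, such an automorphism preserves $H_x=X\cap\{x=0\}$ and restricts to it, and the restriction map $\Stab_{\Aut(X)}(x)\to\Aut(H_x)$ is injective because $H_x$ is an ample divisor in the variety $X$ of dimension at least two, so an automorphism fixing it pointwise must be the identity.

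For the image, since $r$ is induced by automorphisms of $\cX$, it preserves the dual variety $\cX^\vee$, hence the finite set $L_x\cap\cX^\vee$, which by Proposition \ref{prop_crit_smooth_double_hyperplane} consists of exactly $n$ distinct points under the smoothness hypothesis. Thus $I$ is contained in the subgroup of $\PGL(L_x)$ stabilizing these $n$ points, and the preceding Lemma shows this inclusion is an equality. Because any element of $\PGL_2$ fixing three or more points is trivial, the resulting action of $I$ on the $n\ge 3$ points is faithful, so $I$ embeds into $\fS_n$ and is in particular finite; feeding this into the classification of finite subgroups of $\PGL_2$ recalled above then shows $I$ is isomorphic to one of $\ZZ_d,\fD_{2m},A_4,\fS_4,A_5$.

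The genuinely hard inputs have all been established earlier: the inclusion $\Aut(H_x)\subset\Aut(\cX)$ (which for $\IG(2,2n)$ and $F_4/P_4$ rests on the quadric and unique-extension analysis of the previous sections), and the surjectivity half of the preceding Lemma, where every point-preserving permutation must be realized by an explicit structure-group element $\hat g_\sigma(\lambda)$ through solving the compatibility equations. Granting those, the only point in this theorem requiring genuine care is the kernel identification — specifically, checking that an element of $\Stab_{\Aut(X)}(x)$ induces the \emph{trivial} transformation of $L_x$ rather than merely one fixing the two points $[e]$ and $[x]$; this is exactly where one uses that such an automorphism fixes the forms $e$ and $x$ themselves (and that $\Aut(X)$ fixes the identity $e$), and not only the points they define.
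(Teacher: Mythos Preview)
Your proof is correct and follows essentially the same route as the paper: the Theorem is really a synthesis of the preceding Proposition ($\Aut(H_x)=\Stab_{\Aut(\cX)}(L_x)$), the short exact sequence derived from the restriction map to $\PGL(L_x)$, the Lemma identifying $I$, and the classification of finite subgroups of $\PGL_2$. You are in fact more explicit than the paper about why the kernel is $\Stab_{\Aut(X)}(x)$ rather than $\Stab_{\Aut(X)}([x])$---the paper only records $K\subset\Aut(X)$ and then states the Theorem, whereas you spell out that the natural linear lift of $\Aut(X)$ fixes $e$, forcing the scalar on $L_x$ to be $1$ and hence $x$ itself to be fixed; this is a useful clarification. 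Your injectivity paragraph is redundant (it is already contained in the Proposition $\Aut(H_x)=\Stab_{\Aut(\cX)}(L_x)$), but harmless.
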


Note that $I$ is one of these groups if and only if $L_x\cap \cX^\vee$ is a union of orbits of $I$, and of no bigger group in the list. 
For example, the icosahedral group $A_5$ acting on $\PP^1$ has generic orbits of size $60$, plus three special orbits 
of size $12,20,30$. Taking a union of some of the special orbits, plus any number of generic orbits, we can 
deduce for each value of $n$ whether there exists a smooth hyperplane section with icosahedral symmetry: the result is
stated in the Introduction.

 \medskip
Finally, we can conclude that in some cases, some {\it unnatural automorphisms} can exist, unnatural in the sense that do not lift to $\Aut(X)$. 

\begin{prop}
\label{lem_unnatural_aut_jordan}
For any choice of $n$ distinct points $p_1,\ldots,p_n$ inside $\PP^1$, there exists $x$ such that $H_x$ is 
smooth and $L_x\cap \cX^\vee= \{p_1,\ldots,p_n\}\subset \PP^1$.

If the associated subgroup $I$ of $\PGL(L_x)$ has no fixed point, then the automorphisms of $H_x$ cannot always 
be lifted to automorphisms of $X$, nor of any hyperplane section of $\cX$. 
\end{prop}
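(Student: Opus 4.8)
The plan is to prove the two assertions in turn, starting with the realization statement. By Lemma~\ref{lem_diagonalization} and Proposition~\ref{prop_crit_smooth_double_hyperplane} it suffices to produce, for a suitable identification $L_x\cong\PP^1$, a smooth $H_x$ whose intersection data matches the prescribed points, and I would do this with diagonal pencils. Taking $e=\mathbbm{1}$ and $x=\diag(x_1,\dots,x_n)$, the norm restricts along the pencil to $\det(ae+bx)=\prod_{i=1}^{n}(a+bx_i)$ (the determinant for $\cS_n$ and $\cM_n$, the pfaffian for $\cA_{2n}$, the octonionic determinant for $\cJ_3(\OO)$), so that $L_x\cap\cX^\vee$ consists exactly of the $n$ points $[a:b]=[-x_i:1]$; these are distinct precisely when the $x_i$ are, and all differ from $\CC e=[1:0]$, so $H_x$ is smooth by Proposition~\ref{prop_crit_smooth_double_hyperplane}. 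Thus diagonal pencils realize every configuration of $n$ distinct points avoiding a chosen point of $L_x$. Given arbitrary $p_1,\dots,p_n$, I would pick $q\notin\{p_i\}$ and a projective isomorphism $\PP^1\cong L_x$ sending $q$ to $\CC e$; the images of the $p_i$ then have the form $[-x_i:1]$ with distinct $x_i$, and the corresponding diagonal $x$ realizes the configuration.

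For the non-liftability I would first identify the relevant varieties. If $H_x$ is a hyperplane section of a hyperplane section $Y=\cX\cap H_y$ of $\cX$, then $H_x=\cX\cap H_y\cap H_z$ and the pencil $\langle H_y,H_z\rangle$ must coincide with $L_x$; hence $[y]\in L_x$, and conversely each $[y]\in L_x$ yields such a $Y$, with $[y]=[e]$ recovering $X$ itself. I would then invoke two facts established above: that $\Aut(H_x)=\Stab_{\Aut(\cX)}(L_x)$, which realizes $\Aut(H_x)$ as a subgroup of $\Aut(\cX)$ and provides the surjection $\Aut(H_x)\to I\subset\PGL(L_x)$ of Theorem~\ref{thm_exact_sequence_aut_jordan}; and that $\Aut(Y)\subset\Aut(\cX)$ for the relevant $Y$ (for $Y=X$ this is the proposition underlying the exact sequence, and the smooth $Y$ with $[y]\notin\cX^\vee$ are all $\cG$-conjugate to $X$).

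The heart of the argument is a fixed-point reduction. Viewing $f\in\Aut(H_x)$ as an element of $\Aut(\cX)$ preserving $L_x$, lifting $f$ to $\Aut(Y)$ amounts to $f$ preserving $Y=\cX\cap H_y$; since $\cX\cap H_y$ spans $H_y$, this forces $f$ to preserve $H_y$ and hence to fix $[y]\in L_x$, i.e.\ the image of $f$ in $I$ fixes $[y]$. Consequently, if every automorphism of $H_x$ lifted to $\Aut(Y)$, the whole group $I$ would fix $[y]$. When $I$ has no fixed point on $\PP^1$ this fails for every $[y]\in L_x$: taking $[y]=[e]$ gives $\Aut(H_x)\not\subset\Aut(X)$, and letting $[y]$ range over $L_x$ shows that $\Aut(H_x)$ lifts to no hyperplane section of $\cX$ at all. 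The dichotomy that cyclic subgroups of $\PGL_2$ have fixed points while all others do not is exactly what makes ``no fixed point'' the right criterion.

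The step I expect to be the main obstacle is the lifting reduction for \emph{all} hyperplane sections $Y$, not merely the smooth ones: one must either check that $\Aut(Y)\subset\Aut(\cX)$ also when $[y]\in\cX^\vee$ (so $Y$ is singular), or argue that it suffices to treat the smooth $Y$ together with $X$ itself. Closely tied to this is the need to know that an $\cX$-automorphism is pinned down by its restriction to $H_x$, so that a lift is genuinely \emph{forced} to fix $[y]$ rather than merely permitted to; once these compatibilities are secured, the remaining steps are routine.
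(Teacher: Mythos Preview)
Your argument is correct and is precisely the reasoning the paper has in mind; the paper in fact gives no separate proof of this proposition, treating it as an immediate consequence of the preceding discussion (the diagonalization Lemma~\ref{lem_diagonalization}, the explicit description of $I$, and the remark just before Theorem~\ref{thm_exact_sequence_aut_jordan} that if $I$ is not cyclic then $\Aut(H_x)$ is not contained in any copy of $\Aut(X)$).

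On the two points you flag as possible obstacles: neither is a genuine gap. For the uniqueness of the lift, note that any $g\in\Aut(\cX)$ preserving $H_x$ setwise preserves its linear span, hence the pencil $L_x$; thus $g\in\Stab_{\Aut(\cX)}(L_x)=\Aut(H_x)$ and is determined by its restriction, so your fixed-point reduction is watertight. As for singular $Y$, the paper's intended reading of ``any hyperplane section of $\cX$'' is the smooth ones---this is what the phrase ``any copy of $\Aut(X)$'' in the sentence preceding Theorem~\ref{thm_exact_sequence_aut_jordan} makes explicit---and for those $Y$ one has $\Aut(Y)\subset\Aut(\cX)$ since they are all $\cG$-conjugate to $X$. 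So you may drop the caveat.
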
 

This was already observed in \cite{pv} for the {\it general} hyperplane sections of $\IG(2,6)$ and $\IG(2,8)$. More recently, 
hyperplane 
sections of the symplectic Grassmannians $\IG(2,2n)$ were coined {\it bisymplectic Grassmannians} and studied in greater details in
\cite{vlad}.

\section{Adjoint varieties} 
\label{sec_adjoint}

For a simple complex Lie algebra $\fg$, the adjoint variety $X_{\ad}(\fg)$ is the unique closed $G$-orbit 
inside $\PP(\fg)$, where $G=\Aut(\fg)$ is the adjoint group (or any connected Lie group with Lie algebra $\fg$). 
Although they are not always generalized Grassmannians (type A is the exception), or when they are, not 
necessarily minimally embedded (type C is the exception), adjoint varieties have several nice common properties 
that we will first discuss. 

\subsection{Preliminaries} First observe that by Demazure's results (Theorem \ref{thm_Demazure}),
$$\Aut(X_{\ad}(\fg))=\Aut(\fg)=G\rtimes\Gamma,$$
where $G=\Aut^0(\fg)$ is the adjoint group of $\fg$ and $\Gamma$ is the symmetry group of the associated Dynkin diagram. 

In the rest of this section we will exclude type $C$, for which a hyperplane section of the 
adjoint variety is just a quadric. We will also exclude type A, since the adjoint varieties in this type
are flag manifolds $\Fl(1,n-1,n)$ and were already considered before.

The results of the previous sections imply the following statement:

\begin{prop} 
\label{prop_containment} 
For any smooth hyperplane section $H_x$,
defined by $[x]\in\PP(\fg)$, 
$$\Aut (H_x)=\Stab_{\Aut(\fg)}([x])\subset \Aut(\fg).$$
\end{prop}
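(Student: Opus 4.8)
The plan is to combine the lifting result of Theorem \ref{thm_1.1} with the elementary observation that, for adjoint varieties, the cutting hyperplane is recovered intrinsically as the linear span of $H_x$. First I would record that the adjoint varieties under consideration (types B, D, E, F, and G, once types A and C have been excluded as in the section hypotheses) are generalized Grassmannians that are \emph{minimally} embedded in $\PP(\fg)$, and that none of them coincides with one of the two exceptional coadjoint-but-not-adjoint varieties $F_4/P_4$ or $\IG(2,2n)$ occurring in Theorem \ref{thm_1.1}; for $G_2/P_1$ one invokes \cite{pz2}, exactly as in the proof of that theorem. Consequently Theorem \ref{thm_1.1} applies in full strength (not merely on the connected component) and yields an inclusion $\Aut(H_x)\subset\Aut(X_{\ad}(\fg))=\Aut(\fg)$, the last identification being Demazure's Theorem \ref{thm_Demazure}.

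One inclusion is then immediate. If $g\in\Aut(\fg)$ fixes $[x]$, then $g$ preserves $X_{\ad}(\fg)$ and, since every Lie algebra automorphism preserves the Killing form $\kappa$, it preserves the hyperplane $\PP(x^\perp)$; hence it preserves $H_x=X_{\ad}(\fg)\cap\PP(x^\perp)$ and restricts to an automorphism of $H_x$. This gives $\Stab_{\Aut(\fg)}([x])\subset\Aut(H_x)$.

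For the reverse inclusion the key point is that the assignment $[x]\mapsto H_x$ is injective on smooth sections, because $\PP(x^\perp)$ is precisely the linear span of $H_x$. To see this I would use that $X=X_{\ad}(\fg)$ is rational homogeneous, so $H^1(X,\cO_X)=0$; the cohomology of the short exact sequence $0\to\cO_X\to\cO_X(1)\to\cO_{H_x}(1)\to 0$ then shows that $H^0(X,\cO_X(1))\to H^0(H_x,\cO_{H_x}(1))$ is surjective with one-dimensional kernel spanned by the section $x$. Equivalently, $H_x$ is linearly non-degenerate in $\PP(x^\perp)$, so $\PP(x^\perp)=\langle H_x\rangle$ depends only on $H_x$. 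Now take $g\in\Aut(H_x)\subset\Aut(\fg)$: one computes $g(H_x)=X_{\ad}(\fg)\cap\PP((g\cdot x)^\perp)=H_{g\cdot x}$, and $g(H_x)=H_x$ gives $H_{g\cdot x}=H_x$, whence $\langle H_{g\cdot x}\rangle=\langle H_x\rangle$ forces $(g\cdot x)^\perp=x^\perp$ and therefore $g\cdot[x]=[x]$. This proves $\Aut(H_x)\subset\Stab_{\Aut(\fg)}([x])$; together with the previous paragraph, and with the fact that the same non-degeneracy makes the lift of each automorphism unique (an element of $\Aut(\fg)$ acting as the identity on the spanning subvariety $H_x$ must be the identity on $x^\perp$, and the bracket condition then forces it to be trivial), we obtain the asserted equality of subgroups of $\Aut(\fg)$.

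I expect the main obstacle to be the bookkeeping in the first step: one must verify that every adjoint variety in the allowed types genuinely is minimally embedded in $\PP(\fg)$ and genuinely lies outside the exceptional list of Theorem \ref{thm_1.1}, so that the full lifting statement (and not just its connected-component version) is legitimately available. The cohomological non-degeneracy argument and the easy inclusion are routine by comparison.
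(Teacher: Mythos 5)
Your proposal follows the same overall route as the paper: lift every automorphism of $H_x$ to $\Aut(\fg)$ via Theorem \ref{thm_1.1} (and \cite{pz2} for $G_2$), identify $\Aut(X_{\ad}(\fg))$ with $\Aut(\fg)$ by Theorem \ref{thm_Demazure}, obtain the inclusion $\Stab_{\Aut(\fg)}([x])\subset\Aut(H_x)$ from Killing-invariance, and recover $[x]$ intrinsically as the Killing-orthogonal of the linear span of $H_x$. Your bookkeeping (minimal embedding, exclusion of $F_4/P_4$ and $\IG(2,2n)$, types A and C excluded by hypothesis) is correct, and the cohomological span argument is a clean way of making explicit the step the paper only asserts with ``hence more precisely to $\Stab_{\Aut(\fg)}([x])$''.

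The genuine gap is the injectivity of the restriction morphism $\Stab_{\Aut(\fg)}([x])\to \Aut(H_x)$ --- exactly the point to which the paper's proof is devoted, and without which the asserted equality of subgroups of $\Aut(\fg)$ does not follow. Your parenthetical argument fails at its first step: an element $g\in\Aut(\fg)$ fixing every point of $H_x$ is \emph{not} thereby ``the identity on $x^\perp$''. Since $H_x$ is irreducible and spans $\PP(x^\perp)$, pointwise fixing only forces $g|_{x^\perp}=\lambda\,\id$ for some scalar $\lambda$; and because elements of $\Aut(\fg)$ are genuine linear maps on $\fg$ rather than projective transformations, $\lambda$ cannot be normalized away: any linear map acting by $\lambda$ on $x^\perp$ and by $\mu$ on a complementary line fixes $H_x$ pointwise. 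Proving $\lambda=\mu=1$ is the real content, and ``the bracket condition then forces it to be trivial'' is an assertion, not an argument. The paper's proof of this point is substantial: a case distinction according to whether $x\notin x^\perp$ or $x\in x^\perp$ (the isotropic case genuinely occurs for smooth $H_x$, e.g.\ for isotropic regular semisimple $x$) shows that such a $g$ is semisimple; then the centralizer of $g$ in $G\rtimes\Gamma$ contains a torus of rank equal to $\rank(G)\geq 2$, whose Lie algebra is fixed pointwise by $g$ and meets $x^\perp$ nontrivially, which forces $\lambda=1$; finally, since $G$ is simple, $\det(g)=1$ yields $g=\id$. One could instead try an elementary eigenvalue/bracket analysis (for instance, $\lambda=-1$ would force $[x^\perp,x^\perp]\subset\CC x$, which is impossible because nonzero elements of a simple Lie algebra of rank at least two satisfy $\rank(\ad)\geq 4$), but some argument of this kind must actually be supplied; as written, the nontrivial half of the proposition is missing from your proof.
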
 

\begin{proof}
We have seen that an element of $\Aut (H_x)$ can be lifted to $\Aut(\fg)$, hence more precisely to 
$\Stab_{\Aut(\fg)}([x])$. There just remains to check that the restriction morphism from $\Stab_{\Aut(\fg)}([x])$
to $\Aut (H_x)$ is injective. 

So consider $g\in \Aut(\fg)$ stabilizing $[x]\in \PP(\fg)$ and acting trivially on $H_x$, hence  as a homothety on $x^\perp$. If $x\notin x^\perp$ then $g$ is semisimple. If $x\in x^\perp$, consider an element $y\notin x^\perp$ such that $(y,y)=0$ (if necessary, add to $y$ a multiple of $x$). Since  $g$ preserves the Killing form and it acts as a homothety on $x^\perp$, the space $x^\perp \cap y^\perp \subset x^\perp$ is stabilized by $g$, and so is its orthogonal $\langle x,y\rangle$. Moreover $[x]$ and $[ y]$ are the only points in $\PP(\langle x,y\rangle)$ whose Killing norm is zero; $g$ stabilizes $\CC x$ thus it also stabilizes $\CC y$, and therefore we can again conclude that $g$ is semisimple. 

As a consequence, the centralizer of $g$ inside $G\rtimes \Gamma$ contains a torus $T$ whose rank is the same as $G$ (even though a priori it is not contained in the connected component of the identity of $G\rtimes \Gamma$). Then $g$ acts as the identity on the Lie algebra of $T$, whose dimension is greater than one, which implies that it acts as the identity on $x^\perp$. Since $G$ is simple, $\det(g)=1$ and $g$ acts as the identity on $\PP(\fg)$, thus $g=\id$. 
\end{proof} 

An interesting and important property of adjoint varieties is that they admit {\it contact structures};
according to the celebrated LeBrun-Salamon conjecture, they should even be the only Fano varieties of Picard number 
one (this excludes type A) admitting such a structure (see e.g. \cite{beauville} for an introduction). Recall that 
a contact structure is given by a special distribution of tangent hyperplanes. Since the adjoint variety is homogeneous, 
its tangent space at $[Z]$ is simply $[\fg, Z]$, and the contact hyperplane is 
$$P_{[Z]}=[Z^\perp, Z]\subset [\fg, Z]=T_{[Z]}X_{\ad}(\fg),$$
where $Z^\perp\subset \fg$ is the orthogonal to $Z$ with respect to the Killing form $K$.
Recall that this contact hyperplane admits a non degenerate skew-symmetric form, defined up to scalar
by $$\omega_Z(U,V)=K(Z,[U,V]).$$ 
This is nothing else than the projective version of the famous Kostant-Kirillov
form. Except in type $A$, the isotropy representation is in fact irreducible on the contact hyperplane, 
and the unique closed orbit 
defines a {\it Legendrian cone}  $LC_{[Z]}\subset \PP(T_{[Z]}X_{\ad}(\fg))$ (see section 3.2 of \cite{lm-classification}). Geometrically, this is the union 
of the lines in the adjoint variety passing through $[Z]$ (that some authors will rather see as the VMRT, or Variety 
of Minimal Rational Tangents, at this point, see e.g. \cite{fh, bfm}).

Now if $x\in \fh$ is regular semisimple with $\fh$ a Cartan subalgebra of $\fg$, the hyperplane section
$H_x$ contains the Legendrian cone $LC_{[Z]}$
exactly when $[Z]$ is a root space of a long root. Indeed, since the linear span of $LC_{[Z]}$ is $[Z^\perp, Z]$, by the usual invariance property of the Killing form 
it is contained in $H_x$ if and only if 
$0=K([Z^\perp,Z],x)=K([x,Z],Z^\perp)$, which implies the claim.  In particular, any automorphism of $H_x$ permutes the points $[Z]$ at which the VMRT coincides with $LC_{[Z]}$ (i.e. the long root spaces), and thus permutes the Legendrian cones. This was used in \cite{pz2} to help understanding the finite part of $\Aut(H_x)$, but we will
follow another route. Note nevertheless the interesting geometric property that any hyperplane section $H_x$ contains
some of these nice Legendrian cones, studied in detail in \cite{buczynski}.

\subsection{Tevelev's formula} 
In order to be able to describe $\Aut (H_x)$ more precisely, we will need to know for which $x\in\fg$ the hyperplane section 
$H_x$ is smooth. The answer to this question is provided by Tevelev's formula \cite{tevelev}. 

Indeed, as always the projective dual variety parametrizes the singular hyperplane sections. 
For the adjoint variety  $X_{\ad}(\fg)$ this is a hypersurface 
$X_{\ad}(\fg)^\vee$ in the dual projective space $\PP(\fg^\vee)$ and Tevelev's formula gives an equation 
for this hypersurface. 

Note that $\PP(\fg^\vee)$ is naturally identified to $\PP(\fg)$ by the
Killing form. Being $G$-invariant, an equation of the dual is given by some $G$-invariant homogeneous polynomial on $\fg$. 
Once we fix a Cartan subalgebra $\fh$ of $\fg$, we get a root space decomposition, a root system $R$ made of linear 
forms on $\fh$, and a Weyl group $W$ that permutes the roots. 
By Chevalley's Theorem, the algebra of $G$-invariant polynomials on $\fg$ is isomorphic to 
 the algebra of $W$-invariant polynomials on $\ft$. 
 
 \begin{prop}[Tevelev's formula]
 The invariant polynomial 
 $$P_\fg = \prod_{\alpha\in R_\ell}\alpha \in \CC[\fh]^W\simeq \CC[\fg]^G$$
 gives the equation of $X_{\ad}(\fg)^\vee$, where $R_\ell\subset R$ is the subset of long roots
 (with the usual convention that in a simply laced root system, all the roots are long).
 \end{prop}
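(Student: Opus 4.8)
The plan is to determine exactly when the hyperplane section $H_x$ is singular and to match this condition with the vanishing of $P_\fg$. Since $X_{\ad}(\fg)^\vee$ is a $G$-invariant hypersurface, an equation for it is a $G$-invariant polynomial, which by the Chevalley restriction isomorphism $\CC[\fg]^G\simeq\CC[\fh]^W$ is determined by its restriction to $\fh$; moreover any invariant polynomial takes the same value at $x$ and at its semisimple part $x_s$ (as $x_s$ lies in the orbit closure of $x$). It therefore suffices to decide, for $x=h\in\fh$, whether $H_h$ is singular, and to check that the resulting locus in $\fh$ is exactly the union of the long root hyperplanes, that is the reduced zero locus of $P_\fg$.

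First I would establish the tangency criterion. Using $T_{[Z]}X_{\ad}(\fg)=[\fg,Z]$ and the skew-symmetry of $\ad_x$ with respect to the Killing form $K$ (so that $\mathrm{im}(\ad_x)^\perp=\ker(\ad_x)$), the inclusion $x^\perp\supseteq T_{[Z]}X_{\ad}(\fg)$ splits, via $K([x,U],Z)=K(x,[U,Z])$, into the two conditions $K(x,Z)=0$ and $[x,Z]=0$. Hence $H_x$ is singular precisely when there exists $Z\in\cO_{\min}$ (a point of the adjoint variety) with $[x,Z]=0$ and $K(x,Z)=0$. When $x=h$ is semisimple the second condition is automatic: for commuting $h$ semisimple and $Z$ nilpotent the operator $\ad_h\ad_Z$ is nilpotent, so $K(h,Z)=\mathrm{tr}(\ad_h\ad_Z)=0$. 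Thus $H_h$ is singular if and only if $\cO_{\min}\cap\mathfrak{z}(h)\neq\emptyset$, where $\mathfrak{z}(h)$ is the centralizer of $h$.

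The easy inclusion is then immediate: if $\alpha$ is a long root with $\alpha(h)=0$, the root vector $e_\alpha$ lies in $\cO_{\min}$ (long root vectors are exactly the conjugates of the highest root vector) and commutes with $h$, so $H_h$ is singular. For the converse — which I expect to be the main obstacle — suppose $Z\in\cO_{\min}\cap\mathfrak{z}(h)$. The centralizer $\mathfrak{z}(h)$ is reductive with root system $R_h=\{\beta\in R:\beta(h)=0\}$ and Cartan $\fh$; writing $\mathfrak{z}(h)=Z(\mathfrak{z}(h))\oplus\mathfrak{m}$ with $\mathfrak{m}=[\mathfrak{z}(h),\mathfrak{z}(h)]$, and noting that the semisimple (hence central) component of a nilpotent element vanishes, one finds $Z\in\mathfrak{m}$, a semisimple subalgebra whose roots are exactly $R_h$. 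Now the closure of the $\mathfrak{m}$-orbit of $Z$ sits inside $\overline{\cO_{\min}}=\cO_{\min}\cup\{0\}$ and contains a minimal nilpotent orbit of $\mathfrak{m}$, hence a root vector $e_{\beta_0}$ for $\beta_0$ a highest root of some simple factor of $R_h$. Then $e_{\beta_0}\in\cO_{\min}$ forces $\beta_0$ to be a long root of $\fg$; since $\beta_0\in R_h$ vanishes on $h$, we conclude $P_\fg(h)=0$.

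Putting the two inclusions together shows that the singular locus meets $\fh$ exactly along the union of the long root hyperplanes. Since $X_{\ad}(\fg)^\vee$ is an irreducible $G$-invariant hypersurface, its equation restricts to a $W$-invariant polynomial on $\fh$ vanishing precisely on these hyperplanes; the smallest such invariant is $P_\fg=\prod_{\alpha\in R_\ell}\alpha$ (note that the product of the positive long roots alone is not $W$-invariant, which is why the full product, a perfect square, is the natural invariant), and one deduces that $P_\fg$ is the equation of the dual. The crux of the whole argument is the converse inclusion: it rests on the identification of the root vectors lying in $\cO_{\min}$ with the long ones, together with the behaviour of the minimal orbit under restriction to the subalgebra $\mathfrak{m}$ cut out by the roots vanishing on $h$.
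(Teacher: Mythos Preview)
The paper does not prove this proposition at all: it is quoted as Tevelev's formula with a reference to \cite{tevelev}, and is immediately used as input for the subsequent Propositions. So there is no ``paper's own proof'' to compare against.

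That said, your argument is essentially a correct proof of the formula. The tangency criterion is right: for $[Z]\in X_{\ad}(\fg)$ the affine tangent space is $[\fg,Z]$, and $K(x,[\fg,Z])=0$ is equivalent to $Z\in(\mathrm{im}\,\ad_x)^\perp=\ker\ad_x$; the extra condition $K(x,Z)=0$ is indeed automatic once $[x,Z]=0$, since $\ad_x\ad_Z$ is then nilpotent. Your treatment of the converse inclusion is also sound: $Z\in\cO_{\min}\cap\mathfrak z(h)$ is nilpotent in $\fg$, hence (by compatibility of Jordan decomposition with the Levi $\mathfrak z(h)$) lies in the semisimple part $\mathfrak m$; degenerating the $M$-orbit of $Z$ inside $\overline{\cO_{\min}}$ to a minimal orbit of a simple factor produces a root vector $e_{\beta_0}\in\cO_{\min}$, and the $T$-fixed points of $X_{\ad}(\fg)$ are exactly the long root spaces, so $\beta_0$ is long.

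The only place that deserves a sentence more is the last step, where you pass from ``same zero set'' to ``same equation''. You have $Z(f)=Z(P_\fg)$ with $f$ irreducible, hence $P_\fg=cf^a$ for some $a\ge 1$. Restricting to $\fh$, $P_\fg|_\fh=\pm\bigl(\prod_{\alpha\in R_\ell^+}\alpha\bigr)^2$ has every linear factor with multiplicity exactly $2$, so $a\mid 2$; and $a=2$ would force $f|_\fh$ to be $\prod_{\alpha\in R_\ell^+}\alpha$, which is only $W$-semi-invariant (it changes sign under any long simple reflection), contradicting $f\in\CC[\fg]^G$. Hence $a=1$. You gesture at this with your parenthetical remark, but spelling it out closes the argument cleanly.
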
 
 
 A first consequence is the following:

\begin{prop}
\label{proppossiblex_simply_laced}
Suppose that $\fg$ is simply laced. Let $x\in\fg$ be such that the hyperplane section $H_x=X_{\ad}(\fg)\cap \PP(x^\perp)$
is smooth. Then $x$ is regular semisimple.
\end{prop}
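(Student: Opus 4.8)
The plan is to translate the smoothness hypothesis into a nonvanishing statement for Tevelev's polynomial, and then to read off from that nonvanishing everything we need about the Jordan decomposition of $x$. First I would reformulate smoothness: since the projective dual $X_{\ad}(\fg)^\vee$ parametrizes the singular hyperplane sections, $H_x$ is smooth precisely when the point $[x]\in\PP(\fg^\vee)\cong\PP(\fg)$ (under the Killing identification) does not lie on $X_{\ad}(\fg)^\vee$. By Tevelev's formula, together with the simply laced hypothesis (so that $R_\ell=R$), this hypersurface is cut out by $P_\fg=\prod_{\alpha\in R}\alpha$, viewed as a $G$-invariant polynomial on $\fg$ via Chevalley's isomorphism. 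Hence smoothness of $H_x$ is equivalent to $P_\fg(x)\neq 0$.

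Next I would pass to the semisimple part. Writing the Jordan decomposition $x=x_s+x_n$, the crucial input is that the $G$-invariant polynomial $P_\fg$ takes the same value on $x$ and on $x_s$, that is $P_\fg(x)=P_\fg(x_s)$. This holds because $x_s$ lies in the closure of the adjoint orbit of $x$: embedding $x_n$ into an $\fsl_2$-triple inside the reductive centralizer $\fg^{x_s}$ and using the associated cocharacter $\lambda(t)$, one has $\Ad(\lambda(t))x=x_s+t^2x_n\to x_s$ as $t\to 0$, while $P_\fg$ is constant along adjoint orbits. Therefore $P_\fg(x_s)=P_\fg(x)\neq 0$.

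Then I would read off regularity of $x_s$ and eliminate the nilpotent part. Conjugating $x_s$ into the Cartan subalgebra $\fh$, we get $P_\fg(x_s)=\prod_{\alpha\in R}\alpha(x_s)$, so the nonvanishing forces $\alpha(x_s)\neq 0$ for every root $\alpha$; this is exactly the condition that $x_s$ be regular semisimple, whose centralizer is then $\fg^{x_s}=\fh$. Since $x_n$ commutes with $x_s$, it lies in $\fg^{x_s}=\fh$; but $\fh$ consists entirely of semisimple elements, so the nilpotent element $x_n$ must vanish. Hence $x=x_s$ is regular semisimple, as claimed.

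The only step requiring genuine care is the reduction $P_\fg(x)=P_\fg(x_s)$, i.e. the fact that an invariant polynomial sees only the semisimple part; this is the orbit-closure argument above (one may alternatively invoke it as the standard statement that invariant polynomials are constant on the fibers of the adjoint quotient, which identify semisimple orbits with their closures). Everything else reduces to the elementary observation that $\prod_{\alpha\in R}\alpha(x_s)\neq 0$ characterizes regular semisimple elements, combined with the triviality that $\fh$ contains no nonzero nilpotent element.
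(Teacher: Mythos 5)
Your proof is correct and follows essentially the same route as the paper: Tevelev's formula turns smoothness into $P_\fg(x)\neq 0$, $G$-invariance reduces this to $P_\fg(x_s)\neq 0$, which forces $x_s$ to be regular, and then $[x_s,x_n]=0$ kills $x_n$. The only difference is that you spell out the orbit-closure (cocharacter) argument behind $P_\fg(x)=P_\fg(x_s)$, which the paper leaves implicit in the phrase ``since $P_\fg$ is $G$-invariant''.
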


\begin{proof}
Let $x=x_s+x_n$ be the Jordan decomposition of $x$, with $x_s\in\fh$ and $[x_s,x_n]=0$. By Tevelev's formula $H_x$ is smooth if and only if $P_\fg (x)\ne 0$. Since $P_\fg$ is $G$-invariant this is equivalent to $P_\fg(x_s)\neq 0$, which implies that 
the centralizer of $x_s$ is exactly $\fh$, so that $x_s$ is regular. Then from $[x_s,x_n]=0$ we deduce that in fact
$x_n=0$. 
\end{proof}

In the non simply laced case the situation is richer.

\begin{prop}
\label{classification_possible_h}
Suppose that $\fg$ is not simply laced nor of type C. Let $x\in\fg$ be such that the hyperplane section $H_x=X_{\ad}(\fg)\cap \PP(x^\perp)$
is smooth. Consider the Jordan decomposition $x=x_s+x_n$, and suppose that $x_s\in\fh$. Then one of the following holds:
\begin{enumerate}
\item $x_s$ is regular, and therefore $x_n=0$;
\item $x=x_s$ and there is a unique short positive root $\beta$ such that $\beta(x_s)=0$;
\item there is a unique short positive root $\beta$ such that $\beta(x_s)=0$,  and we may suppose that $x_n\neq 0$ 
belongs to the root space $\fg_\beta$;
\item $x=x_s$ and there exist two  short positive roots $\beta, \beta'$ such that $\beta(x_s)=\beta'(x_s)=0$, 
and they generate a subroot system of type $A_2$;
\item there exist two  short positive roots $\beta, \beta'$ such that $\beta(x_s)=\beta'(x_s)=0$, 
and they generate a subroot system of type $A_2$; moreover $x_n=X_\beta$;
\item there exist two  short positive roots $\beta, \beta'$ such that $\beta(x_s)=\beta'(x_s)=0$, 
and they generate a subroot system of type $A_2$; moreover $x_n=X_{\beta}+X_{\beta'}$.
\end{enumerate}
\end{prop}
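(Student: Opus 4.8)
The plan is to follow the same initial reduction as in Proposition~\ref{proppossiblex_simply_laced} and then to carry out a type-by-type combinatorial analysis. By Tevelev's formula, $H_x$ is smooth exactly when $P_\fg(x)=\prod_{\alpha\in R_\ell}\alpha(x)\neq 0$; since $P_\fg$ is $G$-invariant and $x_s$ lies in the closure of the $G$-orbit of $x$ (the adjoint quotient identifies $x$ with $x_s$), this is equivalent to $P_\fg(x_s)\neq 0$, i.e.\ to $\alpha(x_s)\neq 0$ for every long root $\alpha$. Introducing $R^\perp:=\{\alpha\in R\mid \alpha(x_s)=0\}$, smoothness of $H_x$ becomes the purely combinatorial statement that $R^\perp$ contains no long root, that is, $R^\perp$ consists entirely of short roots.

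The first task is to classify such all-short subsystems. Since $R^\perp=R\cap x_s^\perp$, it is a parabolic subsystem, so after conjugating $x_s$ into the closure of the dominant chamber it is generated by the subset $J$ of simple roots orthogonal to $x_s$. If $R^\perp$ is all-short then every root of $J$ is short; conversely, as multiple bonds always connect a short to a long root, two short simple roots can only be orthogonal or joined by a single bond, so a set $J$ of short simple roots generates a simply-laced, hence all-short, subsystem. Running through the relevant types with Bourbaki's numbering (type $C$ being excluded), the short simple roots are $\alpha_n$ in $B_n$, $\alpha_2$ in $G_2$, and $\alpha_3,\alpha_4$ in $F_4$. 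The resulting all-short subsystems are therefore $\emptyset$; a single short $A_1$ (from one short simple root); and, only in $F_4$, the type $A_2$ generated by $\{\alpha_3,\alpha_4\}$. In particular $\rank R^\perp\le 2$. Taking $x_s$ in the relative interior of the corresponding face ensures that $R\cap x_s^\perp$ equals exactly this subsystem, which separates the regular case, the $A_1$-cases and the $A_2$-cases.

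It remains to pin down the nilpotent part. As $[x_s,x_n]=0$, the element $x_n$ is a nilpotent of the reductive centralizer $\mathfrak{z}_\fg(x_s)=\fh\oplus\bigoplus_{\alpha\in R^\perp}\fg_\alpha$, whose semisimple part $\fg^\perp$ has root system $R^\perp$; classifying $x$ up to conjugacy fixing $x_s$ amounts to listing the nilpotent orbits of $\fg^\perp$. If $R^\perp=\emptyset$ the centralizer is the abelian $\fh$, forcing $x_n=0$, which gives case~(1). If $R^\perp$ is a short $A_1$, then $\fg^\perp\cong\fsl_2$, whose only nilpotents are $0$ and, up to conjugacy in the centralizer, the root vector $X_\beta$; this yields cases~(2) and~(3). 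If $R^\perp$ is a short $A_2$ (type $F_4$ only), then $\fg^\perp\cong\fsl_3$, whose three nilpotent orbits, indexed by the partitions $(1,1,1)$, $(2,1)$, $(3)$ of $3$, are represented by $0$, $X_\beta$ and $X_\beta+X_{\beta'}$ with $\beta,\beta'$ the simple roots of the $A_2$; this yields cases~(4), (5) and~(6), and exhausts all possibilities.

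The main obstacle is the classification of $R^\perp$ in the second step: one must be sure that in each of the types $B$, $F_4$ and $G_2$ no all-short subsystem of rank greater than two, and in particular no all-short $A_1\times A_1$, can arise. The reduction to subsets of simple roots replaces a search over all of $\fh$ by a finite check on the Dynkin diagram and handles the rank bound; the remaining point is to exclude orthogonal pairs of short roots, which I would do by the direct observation that in $B_n$, $F_4$ and $G_2$ the sum or difference of any two orthogonal short roots is a long root, so such a pair already forces a long root into $R^\perp$, whereas two non-orthogonal short roots necessarily span a short $A_2$. This last configuration is realizable with $x_s\neq 0$ and no vanishing long root only in $F_4$, since there the short $A_2$ has rank two, below the rank of $\fg$, while in $G_2$ the short roots already have full rank and a short $A_2$ would force $x_s=0$. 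This is what confines $R^\perp$ to the three listed possibilities and makes the $A_2$-configuration available only in $F_4$.
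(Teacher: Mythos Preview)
Your proof is correct and follows the same strategy as the paper: reduce via Tevelev's formula to the condition that $R^\perp=\{\alpha\in R:\alpha(x_s)=0\}$ contains only short roots, then classify such subsystems and the nilpotent orbits in the corresponding centralizer. The paper's own proof is much terser---it simply asserts that all-short subsystems in types $B$, $F$, $G$ are of type $A_1$ or $A_2$---whereas you actually supply the argument (via the parabolic reduction to subsets of short simple roots, and your nice observation that short simple roots are never joined by a multiple bond, so the span is simply laced); your final paragraph on excluding $A_1\times A_1$ is in fact redundant with the parabolic argument you already gave, but harmless.
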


\begin{proof}
The proof is similar to the one of Proposition \ref{proppossiblex_simply_laced}. The only difference is that in this case $x_s$ is not required to be regular, but it can be orthogonal to some short roots whose span does not contain any long root; then these roots generate a root subsystem consisting only of short roots. The claim of the proposition is then a consequence of the fact that the root subsystems of the root systems of type B,F,G consisting only of short roots are of type $A_1$ or $A_2$.
\end{proof}

The last three possibilities can only occur in type $F_4$, for which we get exactly six cases up to conjugation. Indeed there is a unique short root subsystem 
of type $A_2$ up to the Weyl group action. Let us denote by $\beta,\beta'$ the two simple roots of this subsystem, and by $X_{\bullet}$ a non-zero vector in the root space $\fg_{\bullet} $. Then the six cases are given by $x=x_s+x_n$ with: 
$$\begin{array}{cccc}
 x_s & x_n&& \\ 
 \mathrm{regular\;semisimple} &  0,&& \\
 Ker\beta & 0,& X_\beta&  \\
 Ker\beta\cap Ker\beta' & 0,& X_\beta,&  X_\beta +X_{\beta'}.
  \end{array}$$

\subsection{The connected component} 
\label{sec_connected_component}

We already have enough information to determine the connected component of $\Aut(H_x)$, since 
$$\Aut^0(H_x)\simeq N_G^0(\CC x),$$ where the RHS is the connected component of the identity of the normalizer of $\CC x$ inside $G$.
Indeed this reduces to 
Lie algebra computations, since $\Aut^0(H_x)\subset G$ is uniquely determined by its Lie algebra 
$\mathfrak{n}_\fg( \CC x)$. According to the six cases of Proposition \ref{classification_possible_h},
we obtain the following possibilities. 

\medskip\noindent {\it First case:} $x\in \fh$ is regular semisimple, then  $\mathfrak{n}_\fg(\CC x)=\fh$ and $\Aut^0(H_x)=T$. 

\smallskip\noindent {\it Second case:} Here $G$ is of type $B_n, G_2, F_4$ and $x\in \fh$ is semisimple with $(x,\alpha_n)= 0$ for $\alpha_n$ a short simple root, and $(x,\alpha)\neq 0$ for all roots $\alpha\neq \alpha_n$. Then
\[
\mathfrak{n}_{\fg}(\CC  x)=\fh \oplus \fg_{\alpha_n}\oplus \fg_{-\alpha_n}.
\]
and $\Aut^0(H_x)$ is an extension of a type $A_2$ group by the torus corresponding to $\alpha_n^\perp$. 

\smallskip\noindent {\it Third  case:} 
Here again $G$ is of type $B_n, G_2,F_4$ and $x=x_s+x_n$, with $x_n\neq 0$ and $x_s\subset \fh$ subregular. 
In particular we can assume that $x_n\in \fg_{\alpha_n}$. We get
$\mathfrak{n}_{\fg}(\CC x)=\alpha_n^\perp \oplus \fg_{\alpha_n}$ with $\alpha_n^\perp \subset \fh$, so that $\Aut^0(H_x)$ is an extension of the additive group $\GG_a$ by the torus corresponding to $\alpha_n^\perp$. 

\smallskip\noindent {\it Fourth  case:} Here $G$ is of type $F_4$, $x\in\fh$ is semisimple and orthogonal to the two short simple roots $\alpha_3,\alpha_4$ (and no other root). Then
\[
\mathfrak{n}_{\fg}(\CC x)=\langle\alpha_3,\alpha_4\rangle^\perp \oplus \fsl_3(\alpha_3,\alpha_4).
\]
where $\fsl_3(\alpha_3,\alpha_4)$ is the $\fsl_3$ subalgebra generated by the root spaces of  $\pm\alpha_3$
 and $\pm\alpha_4$. Then $\Aut^0(H_x)$ is an extension of a type $A_3$ group by the torus corresponding to $\langle\alpha_3,\alpha_4\rangle^\perp$. 

\smallskip\noindent {\it Fifth  case:} Here again
$G$ is of type $F_4$, $x=x_s+x_n$ with $x_s\in \fh$ semisimple and orthogonal to the two short simple roots $\alpha_3,\alpha_4$ (and no other root) and we can assume that $0\neq x_n=X_{\alpha_3+\alpha_4}\in \mathfrak{g}_{\alpha_3+\alpha_4}$. Then 
\[
\mathfrak{n}_{\fg}(\CC x)=(\alpha_3+\alpha_4)^\perp \oplus \fg_{\alpha_3}\oplus \fg_{\alpha_4}\oplus \fg_{\alpha_3+\alpha_4}
\]
and $\Aut^0(H_x)$ is an extension of a three-dimensional unipotent group by the torus corresponding to $(\alpha_3+\alpha_4)^\perp$. 

\smallskip\noindent {\it Sixth  case:}  Here again
$G$ is of type $F_4$, $x=x_s+x_n$ with $x_s\in\fh$ semisimple and orthogonal to the two short simple roots $\alpha_3,\alpha_4$ (and no other root) and we can assume that $x_n=X_{\alpha_3}+X_{\alpha_4}$. Then 
\[
\mathfrak{n}_{\fg}(\CC x)=\langle\alpha_3,\alpha_4\rangle^\perp \oplus \fg_{\alpha_3+\alpha_4}\oplus \CC x_n
\]
and $\Aut^0(H_x)$ is an extension of a two-dimensional unipotent group by the torus corresponding to $\langle\alpha_3,\alpha_4\rangle^\perp$. 

\medskip Note that if $r$ is the rank of $\fg$, the dimension of $\Aut^0(H_x)$ is $r+\epsilon$, 
where according to the six previous cases we have $\epsilon=0,2,0,6,2,0$. Also note that the six cases 
give each only one conjugacy class of subgroups of $G$. 

\begin{remark} Notice that when  $x$ is not semisimple, by 
Matsushima's theorem \cite{matsu} the Fano variety $H_x$ cannot admit any K\"ahler-Einstein metric, since 
its automorphism group is not reductive.

Beyond adjoint varieties, this criterion applies to a couple of other cases like hyperplane sections of $G(2,5)$,  or of the spinor tenfold $OG(5,10)$. Indeed, we know that in these cases the automorphism group of a smooth hyperplane section $H_x$ of 
our variety $X\subset\PP V$ is just the stabilizer of $\CC x$ in the group $G=Aut(X)$. By Matsushima's theorem this 
stabilizer is reductive if and only if the $G$-orbit of $\CC x$ in $\PP V$ is affine. But here this $G$-orbit  is
simply the complement of $X$, and is not affine since $X$ is not a divisor. 
\end{remark}

Let us now turn to the component group of 
 $\Aut(H_x)$, which can vary in a more subtle way. In the next two subsections we will describe 
 $\pi_x:=N_G([x])/N_G^0([x])$, and in the last one we will discuss the possible contribution of the 
 outer automorphism group $\Gamma$.

\subsection{Regular semisimple elements}
If $x=x_s\in\fh$ is regular, its centralizer is $T$ and the normalizer of the line $[x]$ must 
be contained in the normalizer of $T$. We are thus reduced to computing its  normalizer inside the Weyl group. 
This normalizer has been completely described by Springer \cite{Springer},
whose results we will summarize in this section. A first observation is:

\begin{lemma}
Let $x$ be a regular semisimple element. Then $\pi_x$ is equal to $\Stab_{W}([x])$ and it is a cyclic group.
\end{lemma}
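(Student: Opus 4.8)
The plan is to establish the two assertions separately. For the identification $\pi_x=\Stab_W([x])$, I start from the fact that a regular semisimple $x$ has centralizer $Z_\fg(x)=\fh$, so that its connected centralizer in $G$ is exactly the maximal torus $T$; together with the first-case computation of Subsection \ref{sec_connected_component} this gives $N_G^0([x])=T$. The crucial reduction is to show that $N_G([x])\subseteq N_G(T)$. Indeed, if $g\in N_G([x])$ then $\Ad(g)x=\lambda x$ for some $\lambda\in\CC^*$; since $\Ad(g)(\lambda x)=\lambda\,\Ad(g)x$ one has $Z_G(\lambda x)=Z_G(x)$, whence $gZ_G(x)g^{-1}=Z_G(\Ad(g)x)=Z_G(x)$. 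Thus $g$ normalizes $Z_G(x)$ and, in particular, its identity component $T$.

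Since $T$ itself fixes $[x]$, quotienting by $T$ then identifies $N_G([x])/T$ with the subgroup of $W=N_G(T)/T$ preserving the line $[x]\subset\fh$, that is, $\pi_x=N_G([x])/N_G^0([x])=\Stab_W([x])$, as claimed.

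For the cyclicity I would linearize the action: every $w\in\Stab_W([x])$ scales $x$, so there is a well-defined homomorphism $\chi:\Stab_W([x])\to\CC^*$ given by $w\cdot x=\chi(w)\,x$. Its kernel is the pointwise stabilizer $\Stab_W(x)$, which is trivial because $x$, being regular, lies in the interior of a Weyl chamber and $W$ permutes the chambers simply transitively. Hence $\chi$ is injective, realizing $\Stab_W([x])$ as a finite subgroup of $\CC^*$, which is necessarily cyclic.

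The identification part is essentially formal; the one point deserving care is the injectivity of $\chi$, which rests entirely on the triviality of the stabilizer in $W$ of a regular point. This is the elementary kernel of Springer's theory of regular elements \cite{Springer}, to which I will appeal more substantially in the sequel in order to pin down these cyclic groups precisely.
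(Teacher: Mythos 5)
Your proposal is correct and takes essentially the same route as the paper: the paper also reduces $\pi_x$ to $\Stab_W([x])$ by observing that the centralizer of $x$ is $T$ and hence $N_G([x])\subset N_G(T)$ (this step appears in the prose just before the lemma), and it also deduces cyclicity from the fact that $\Stab_W([x])$ acts effectively on the line $\CC x$, with the triviality of $\Stab_W(x)$ for regular $x$ taken from \cite[Proposition 4.1]{Springer}. The one caveat is your parenthetical justification of that triviality via interiors of Weyl chambers: chambers are a real notion and do not literally apply to a complex regular semisimple $x\in\fh$, so the correct justification is precisely Springer's result (equivalently, Steinberg's theorem that point stabilizers in $W$ are generated by the reflections fixing the point), which you do also cite.
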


\begin{proof}
By \cite[Proposition 4.1]{Springer}), $x$ is regular if and only if its stabilizer inside $W$ is trivial. Therefore $\Stab_{W}([x])$ acts effectively on $\CC x$, hence the statement.
\end{proof}

Recall that the algebra of invariants $\CC[\fg]^{G}$ is the same as $\CC[\fh]^{W}$, and is therefore a polynomial algebra.
We fix a basis $f_1,\ldots,f_r$ of $\CC[\fh]^{W}$ of algebraically independent homogeneous elements, where $r$ is the rank of $\fg$, 
and we denote their degrees by $d_1,\ldots , d_r$. The corresponding hypersurfaces will be denoted $F_i=V(f_i)\subset 
\PP(\fh)$. For any positive integer $d$, let 
\[
V_d:= \bigcap_{d \nmid d_i}F_i.
\]
Since $\bigcap_{i=1}^rF_i=0$, the codimension of $V_d$ is equal to $a(d)$, where 
\[
a(d):=\# \{ d_i \mbox{ such that }d\nmid d_i\}.
\]
The stratification given by the $V_d$'s is closely related to the stabilizers in the Weyl group. Indeed, if $\xi$ is a $d$-th root of unity, for any element $w\in W$ let us define
\[
V(w,\xi):= \{ v\in \fh \mid w(v)=\xi v \}.
\]
The following result can be found in \cite[Proposition 3.2, Theorem 3.4, Theorem 4.2]{Springer}.

\begin{theorem}
\label{thmspringer}
Let us fix a primitive $d$-th root of unity $\xi$. Then
\[
V_d=\bigcup_{w\in W} \PP(V(w,\xi)) \quad and \quad 
\max_{w\in W} \dim(V(w,\xi))=a(d).
\]
If $v\in V(w,\xi)$ is a regular element then 
\begin{itemize}
\item[i)] the order of $w$ is $d$;
\item[ii)] $\dim(V(w,\xi))=a(d)$;
\item[iii)] the elements $w$ in $W$ satisfying $ii)$ form a single conjugacy class.
\end{itemize}
\end{theorem}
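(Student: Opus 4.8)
The plan is to reduce the whole statement to one clean dictionary and to import Springer's finer input only where it is unavoidable. The key elementary fact is that, $W$ being finite, the invariants $f_1,\dots,f_r$ separate $W$-orbits on $\fh$: two vectors have the same image under the quotient $\fh\to\fh/W$ precisely when they lie in the same $W$-orbit. Since $f_i$ is homogeneous of degree $d_i$ we have $f_i(\xi v)=\xi^{d_i}f_i(v)$, so the condition $[v]\in V_d$, i.e. $f_i(v)=0$ whenever $d\nmid d_i$, is exactly the condition $f_i(\xi v)=f_i(v)$ for all $i$. Hence
\[
[v]\in V_d \iff f_i(\xi v)=f_i(v)\ \text{for all } i \iff \xi v\in W\!\cdot v \iff \exists\, w\in W,\ wv=\xi v .
\]
This already gives the first assertion $V_d=\bigcup_{w}\PP(V(w,\xi))$, and it shows that the affine cone $\widehat{V_d}$ is the finite union $\bigcup_w V(w,\xi)$, so that $\dim\widehat{V_d}=\max_w\dim V(w,\xi)$.

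To compute this common dimension I would bound it from both sides. Since $\widehat{V_d}$ is cut out inside $\fh$ by the $a(d)$ equations $f_i=0$ (those with $d\nmid d_i$), one has $\max_w\dim V(w,\xi)=\dim\widehat{V_d}\ge r-a(d)=\#\{i:d\mid d_i\}$. For the reverse inequality I would invoke the Molien--Shephard--Todd identity $\frac1{|W|}\sum_{w\in W}\det\nolimits_\fh(1-qw)^{-1}=\prod_{i=1}^r(1-q^{d_i})^{-1}$ and compare the order of the pole at $q=\xi^{-1}$: the right-hand side has a pole of order exactly $\#\{i:d\mid d_i\}$, while the $w$-th summand on the left has a pole of order $\dim V(w,\xi)$. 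Matching the two forces $\max_w\dim V(w,\xi)=\#\{i:d\mid d_i\}=\dim\widehat{V_d}$, the one subtle point being that the leading Laurent coefficients of the maximal terms on the left do not cancel, which is exactly Springer's residue computation.

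Assertion (i) is then immediate and uses none of this: if $v\in V(w,\xi)$ is regular then $\Stab_W(v)=1$, and from $w^kv=\xi^kv$ we get $w^k=1\iff w^kv=v\iff d\mid k$, so $w$ has order exactly $d$. For (ii) and (iii) I would lean on Springer's analysis of the regular locus. For (ii), an eigenspace $L=V(w,\xi)$ meeting the regular locus has trivial pointwise stabiliser, so its setwise stabiliser acts faithfully on $L$ as a (complex) reflection group whose degrees are precisely the $d_i$ divisible by $d$; in particular $\dim L=\#\{i:d\mid d_i\}$ attains the maximum just computed. For (iii) I would show that any two regular vectors $v,v'$ realising the eigenvalue $\xi$ can be carried to one another compatibly with this reflection-group action on the common maximal eigenspace, which forces the corresponding elements $w,w'$ to be $W$-conjugate.

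The main obstacle is (iii). Everything through (i) is formal, (ii) follows once the Lehrer--Springer reflection-group structure on the eigenspace is in hand, and the dimension count needs only the non-cancellation of the top residues in the Molien identity. By contrast, proving that the regular elements attached to a fixed primitive $d$-th root of unity form a \emph{single} conjugacy class is the genuine heart of Springer's theorem, and it is there---in controlling how the $W$-action on $\fh$ descends to the maximal eigenspace and using the transitivity of that induced action on regular eigenvectors---that I expect the real work to lie.
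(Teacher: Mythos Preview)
The paper does not give its own proof of this theorem: it is quoted verbatim as a result of Springer, with the citation ``\cite[Proposition 3.2, Theorem 3.4, Theorem 4.2]{Springer}'' immediately preceding the statement. There is therefore nothing in the paper to compare your argument against.

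That said, your sketch is essentially Springer's own route. The first equality $V_d=\bigcup_w\PP(V(w,\xi))$ via the Chevalley separation-of-orbits argument is exactly Proposition~3.2 in Springer; the Molien--Shephard--Todd residue computation for the maximal eigenspace dimension is Theorem~3.4(i); your observation that (i) is immediate from regularity plus the freeness of the $W$-action on regular vectors is Springer's Proposition~4.1; and (ii) and (iii) together form his Theorem~4.2, whose proof does indeed hinge on the induced reflection-group structure on the maximal eigenspace and is, as you say, where the real content lies. Your identification of (iii) as the crux is accurate.

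One small point worth flagging: your dimension count yields $\max_w\dim V(w,\xi)=\#\{i:d\mid d_i\}$, which is correct and is Springer's convention for $a(d)$. The paper as written defines $a(d)=\#\{d_i:d\nmid d_i\}$, i.e.\ the complementary count; with that definition the displayed formula in the theorem should read $r-a(d)$ rather than $a(d)$. This is a transcription slip in the paper, not an error in your argument.
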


An element $w\in W$ is {\it regular} if it admits a regular eigenvector. For such a $w$, its order will be called a {\it regular number}. The sets $E_\#$ of regular numbers of simple Lie algebras were computed in \cite{Springer}
and are reported in Table \ref{tableregularnumbers}.

\begin{table}[]
\centering
\begin{tabular}{c|c|c}
Type & $d_1,\ldots, d_n$ & $E_\#$ \\
\hline
 $A_n$ & $2,3,\ldots,n+1$ & $\{d\geq 2 \mbox{ such that }d\mid n \mbox{ or }d\mid n+1\}$ \\
 $B_n$, $C_n$ & $2,4,6,\ldots,2n$ & $\{d\geq 2 \mbox{ such that }d\mid 2n\}$ \\
 $D_n$ & $2,4,6,\ldots,2n-2,n$ & $\{d\geq 2 \mbox{ such that }d\mid 2n-2 \mbox{ or }d\mid n\}$ \\    
 $G_2$ & $2,6$ & $2,3,6$ \\
 $F_4$ & $2,6,8,12$ & $2,3,4,6,8,12$ \\
 $E_6$ & $2,5,6,8,9,12$ & $2,3,4,6,8,9,12$ \\
 $E_7$ & $2,6,8,10,12,14,18$ & $2,3,6,7,9,14,18$ \\
 $E_8$ & $2,8,12,14,18,20,24,30$ & $2,3,4,5,6,8,10,12,15,20,24,30$
\end{tabular}

\medskip
\caption{Fundamental degrees and regular numbers}
\label{tableregularnumbers}
\end{table}

\begin{coro}
Let $x$ be a regular element in $\fh$. Then $\Stab_{W}([x])=\ZZ_d$, for $d$ the maximal integer in $E_\#$ such that $[x]$
belongs to $V_d$.
\end{coro}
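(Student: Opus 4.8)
The plan is to combine the preceding Lemma (which identifies $\Stab_W([x])$ as a cyclic group for $x$ regular) with Springer's Theorem \ref{thmspringer}, the bridge being the scalar character through which $\Stab_W([x])$ acts on the line $\CC x$. First I would record this character explicitly: every $w\in\Stab_W([x])$ satisfies $w(x)=c_w\,x$ for a unique scalar $c_w\in\CC^\ast$, and $w\mapsto c_w$ is a group homomorphism whose kernel is $\Stab_W(x)$. Since $x$ is regular, $\Stab_W(x)$ is trivial (as used in the proof of the preceding Lemma, via \cite[Proposition 4.1]{Springer}), so this character is injective. Writing $d_0:=|\Stab_W([x])|$, the image is a finite subgroup of $\CC^\ast$ of order $d_0$, hence exactly the group $\mu_{d_0}$ of $d_0$-th roots of unity; in particular there is a generator $w_0$ with $c_{w_0}=\xi_0$ a primitive $d_0$-th root of unity.

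Next I would show that $d_0\in E_\#$ and $[x]\in V_{d_0}$. By construction $w_0(x)=\xi_0 x$ with $\xi_0$ primitive of order $d_0$, i.e. $x\in V(w_0,\xi_0)$, and $x$ is a regular vector. Theorem \ref{thmspringer}(i) then gives that the order of $w_0$ equals $d_0$ (consistent with the above) and, since $w_0$ admits the regular eigenvector $x$, it is a regular element, so its order $d_0$ is a regular number: $d_0\in E_\#$. Moreover $[x]\in\PP(V(w_0,\xi_0))\subseteq V_{d_0}$ by the first equality of Theorem \ref{thmspringer}. Thus $d_0$ is one of the integers in $E_\#$ with $[x]\in V_{d_0}$.

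Finally I would prove maximality. Let $d\in E_\#$ be any integer with $[x]\in V_d$, and fix a primitive $d$-th root of unity $\xi$. By Theorem \ref{thmspringer}, $V_d=\bigcup_{w\in W}\PP(V(w,\xi))$, so $[x]\in\PP(V(w,\xi))$ for some $w$, i.e. $w(x)=\xi x$. Hence $w\in\Stab_W([x])$ with $c_w=\xi$ of order $d$; by injectivity of the character, $w$ has order $d$ inside the cyclic group $\Stab_W([x])\cong\ZZ_{d_0}$, forcing $d\mid d_0$ and so $d\le d_0$. Therefore $d_0=\max\{d\in E_\#:[x]\in V_d\}$, and $\Stab_W([x])=\ZZ_{d_0}$, as claimed. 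The only delicate point is the correct bookkeeping of the roots of unity — ensuring that the scalar by which a stabilizing element acts is primitive of the expected order, so that Theorem \ref{thmspringer}(i) applies and the divisibility $d\mid d_0$ comes out cleanly; once the injective character is in place this is routine.
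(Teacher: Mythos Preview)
Your proof is correct and is precisely the argument the paper leaves implicit: the corollary is stated there without proof, as an immediate consequence of the preceding Lemma and Springer's Theorem~\ref{thmspringer}. Your use of the injective character $w\mapsto c_w$ to translate between the order of the stabilizer and the primitive root of unity, followed by the two-way inclusion ($d_0\in E_\#$ with $[x]\in V_{d_0}$, and conversely any $d\in E_\#$ with $[x]\in V_d$ divides $d_0$), is exactly the intended unpacking.
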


\begin{coro}\label{corocyclicgroups}
The list of possible stabilizers in $W$ of regular elements in $\fg$ is given by all the groups $\ZZ_d$ with $d$ maximal inside $E_\#$ with respect to the partial ordering: $m\prec m'$ if $m\mid m'$ and $a(m)=a(m')$.
\end{coro}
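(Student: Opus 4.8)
My plan is to read off the realizable stabilizers directly from the preceding Corollary, which asserts that for a regular $x$ one has $\Stab_W([x])=\ZZ_d$ with $d$ the largest element of $E_\#$ for which $[x]\in V_d$. Thus a group $\ZZ_d$ with $d\in E_\#$ occurs as the stabilizer of a regular element precisely when there is a regular $x$ with $d=\max\{e\in E_\#\mid [x]\in V_e\}$, and the whole content of the corollary is to show that this happens if and only if $d$ is $\prec$-maximal. The combinatorial engine will be an elementary remark about the strata $V_d=\bigcap_{d\nmid d_i}F_i$: writing $S_d=\{i\mid d\nmid d_i\}$, so that $a(d)=|S_d|$, the divisibility $d\mid d'$ forces $d'\mid d_i\Rightarrow d\mid d_i$, whence $S_d\subseteq S_{d'}$ and therefore $V_{d'}\subseteq V_d$; if moreover $a(d)=a(d')$, the inclusion $S_d\subseteq S_{d'}$ of finite sets of equal cardinality is an equality, so that $V_d$ and $V_{d'}$ are cut out by the very same hypersurfaces and $V_d=V_{d'}$. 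This is precisely the geometric meaning of $d\prec d'$.

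First I would treat the non-maximal case. If $d$ is not $\prec$-maximal, choose $d'\in E_\#$ with $d\mid d'$, $d\neq d'$ and $a(d)=a(d')$; by the remark above $V_d=V_{d'}$. Then every regular $x$ with $[x]\in V_d$ also has $[x]\in V_{d'}$, so that $e=\max\{e'\in E_\#\mid [x]\in V_{e'}\}$ satisfies $e\geq d'>d$, and the preceding Corollary gives $\Stab_W([x])=\ZZ_e\neq\ZZ_d$. Since $\Stab_W([x])=\ZZ_d$ would force $[x]\in V_d$, this shows that $\ZZ_d$ is never the stabilizer of a regular element when $d$ fails to be $\prec$-maximal.

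For the maximal case I would invoke Springer's Theorem \ref{thmspringer}. Given a $\prec$-maximal $d\in E_\#$, it provides an element $w$ of order $d$ whose $\xi$-eigenspace $V(w,\xi)$ (for $\xi$ a primitive $d$-th root of unity) realizes the top dimension inside $V_d$ and contains regular vectors. The regular vectors being dense in $V(w,\xi)$, while each of the finitely many strata $V_e$ with $a(e)>a(d)$ meets $V(w,\xi)$ in a proper closed subset (it has strictly larger codimension, so cannot contain $V(w,\xi)$), I can pick a regular $x\in V(w,\xi)$ lying in no such $V_e$. Put $e=\max\{e'\in E_\#\mid [x]\in V_{e'}\}$. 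On one hand $w\in\Stab_W([x])=\ZZ_e$ has order $d$, so $\langle w\rangle\cong\ZZ_d$ sits inside $\ZZ_e$ and $d\mid e$, whence $S_d\subseteq S_e$ and $a(d)\leq a(e)$; on the other hand, by the choice of $x$, membership $[x]\in V_e$ excludes $a(e)>a(d)$, so $a(e)=a(d)$. Therefore $d\mid e$ and $a(d)=a(e)$, i.e. $d\preceq e$, and the $\prec$-maximality of $d$ forces $e=d$, that is $\Stab_W([x])=\ZZ_d$. Combining the two cases gives exactly the asserted list.

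The main obstacle is this last step: one must guarantee that the regular element can be chosen generically enough inside the top stratum $V(w,\xi)$ of $V_d$ to avoid all deeper strata, and then convert the purely group-theoretic inclusion $\langle w\rangle\subseteq\Stab_W([x])$ into the numerical equality $a(e)=a(d)$ that drives the maximality argument. Here the three conclusions of Theorem \ref{thmspringer}, namely the order of $w$, the dimension $a(d)$ attained by $V(w,\xi)$, and the fact that the regular elements of maximal dimension form a single conjugacy class, are exactly what makes both the density and the dimension-counting work.
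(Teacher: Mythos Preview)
Your proof is correct and follows essentially the same approach as the paper's. Both argue that $m\prec m'$ forces $V_m=V_{m'}$ to rule out non-maximal $d$, and both realize a $\prec$-maximal $d$ by choosing a generic regular element in a top-dimensional eigenspace $V(w,\xi)\subset V_d$ avoiding the deeper strata; the only cosmetic difference is that the paper excludes the strata $V_{d'}$ for proper multiples $d'$ of $d$, while you exclude the $V_e$ with $a(e)>a(d)$ and then recover $d\mid e$ from $\langle w\rangle\subset\Stab_W([x])$, which is a slightly more explicit bookkeeping of the same step.
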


\begin{proof}
If $m\prec m'$ then $V_m=V_m'$ and therefore, if the stabilizer of a regular element contains $\ZZ_m$ then it also contains $\ZZ_{m'}$. If $d$ is maximal for the partial ordering then the stabilizer of a general regular element in $V_d$ is equal to $\ZZ_d$. More precisely for any $d'$ such that $d \mid d'$, the variety $V_{d'}$ is a strict closed subvariety of $V_d$; outside all those subvarieties $V_{d'}$, a regular element in $V_d$ has stabilizer equal to $\ZZ_d$.
\end{proof}

We finally obtain the list of all possible degrees $d\ge 2$ of non trivial stabilizers of regular elements, 
as reported in Table \ref{tab:orderofstabilizers}.

\begin{table}[t]
\centering
\begin{tabular}{c|c}
Type   & degrees \\
\hline
 $A_n$ &  $d\mid n \mbox{ or }d\mid n+1$ \\
 $B_n$, $C_n$ & $d\mid 2n$, $d$ even \\
 $D_n$ & $d\mid n \mbox{ or }d\mid n-1$, $d$ even \\    
 $G_2$ & $2,6$  \\
 $F_4$ & $2,6,8,12$ \\
 $E_6$ & $2,4,6,8,9,12$ \\
 $E_7$ & $2,6,14,18$ \\
 $E_8$ & $2,4,6,8,10,12,15,20,24,30$
\end{tabular}

\medskip
\caption{Orders of stabilizers}
\label{tab:orderofstabilizers}
\end{table}

\subsection{General case}\label{sec_generalcase} 
In this section we show that even when $x$ is not regular semisimple, we can use 
Springer's results to compute the normalizer of $[x]$ in the adjoint group $G$. For this we will need to restrict to 
a smaller root system. We will need to introduce the following notations.
\begin{itemize}
\item Let $\fj$ denote the subspace of $\fh$ generated by the roots $\alpha\in R$ such that $(x_s,\alpha)=0$, and 
let $S(\fj)$ denote the subgroup of $G$ of type $A_{\dim(\fj)}$ defined by the root subsystem $R\cap \fj$.
\item Let $\fk$ denote the  orthogonal of $\fj$ inside $\fh$, with the corresponding subtorus $T^\perp:=\exp(\fk)\subset T=\exp(\fh)$, and the corresponding root subsystem $R^\perp=R\cap \fk$ with Weyl group  $W^\perp$. These definitions make sense because in this situation it turns out, after a case by case inspection, that the orthogonal of the root subsystem $R\cap \fj$ is indeed a root system inside $\fk$.
\end{itemize}

The rank of $R^\perp$ is the dimension of $\fk$.
Of course when $x$ is regular semisimple, $\fj=0$ and $R^\perp=R$. Otherwise, if $x_s$ is as in cases (2-3) of Proposition \ref{classification_possible_h}, then $R$ is of type $G_2, F_4$ or $B_n$ with $n\geq 3$ and  $R^\perp$ is of type $A_1, B_3$ or $B_{n-1}$, respectively. If $x_s$ is as in cases (4-5-6) of Proposition \ref{classification_possible_h},  $R$ is of type $F_4$ and $R^\perp$ is of type $A_2$.

\begin{lemma} $\pi_x=N_G([x])/N_G^0([x])$ embeds inside $W^\perp=\Aut(R^\perp)$.
\end{lemma}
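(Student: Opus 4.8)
The plan is to construct an injective homomorphism $\bar\phi\colon \pi_x\to\Aut(R^\perp)$ coming from the linear action on $\fk$. First I would extract the basic consequence of the Jordan decomposition: if $g\in N_G([x])$ then $\Ad(g)x=\lambda x$ for some scalar $\lambda$, and since $g$ is an automorphism of $\fg$ it preserves the Jordan decomposition, forcing $\Ad(g)x_s=\lambda x_s$ and $\Ad(g)x_n=\lambda x_n$. In particular $g$ stabilises the line $\CC x_s$, hence normalises the centralizer $M:=Z_G(x_s)^0$, whose Lie algebra is $\mathfrak{m}=\fh\oplus\bigoplus_{\alpha\in R_0}\fg_\alpha$ with $R_0:=R\cap\fj$. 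A direct computation shows that the centre of $\mathfrak{m}$ is exactly $\fk$, so that $Z(M)^0$ is the subtorus $T^\perp$ with $\Lie(T^\perp)=\fk$; consequently $\Ad(g)$ preserves $\fk$, and one may consider the restriction $\Ad(g)|_{\fk}\in\GL(\fk)$.

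Next I would check that this restriction is an automorphism of $R^\perp=R\cap\fk$. As $T\subset M$ is a maximal torus and $g$ normalises $M$, the torus $\Ad(g)T$ is again maximal in $M$; choosing $m\in M$ with $mg\in N_G(T)$ produces a Weyl element $w\in W=N_G(T)/T$. Since $mg$ still normalises $M$, the class $w$ permutes the root spaces of $\mathfrak m$, i.e. $w(R_0)=R_0$ and $w\in N_W(W(R_0))$; as $w$ is a Killing isometry it then preserves $\fj=\langle R_0\rangle$, its orthogonal $\fk$, and the set $R\cap\fk=R^\perp$, so $w|_{\fk}\in\Aut(R^\perp)$. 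Because $m\in M$ centralises $T^\perp$ it acts trivially on $\fk$, whence $\Ad(g)|_{\fk}=w|_{\fk}$; and two admissible choices of $m$ differ by an element of $N_M(T)$, whose class lies in $W(R_0)$ and also acts trivially on $\fk$, so $\bar\phi_0(g):=\Ad(g)|_{\fk}$ is a well-defined element of the group $W^\perp=\Aut(R^\perp)$ of the statement, and $\bar\phi_0\colon N_G([x])\to\Aut(R^\perp)$ is a homomorphism.

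It descends to $\pi_x$ because $N_G^0([x])\subset M$ acts trivially on $\fk$: in each of the six cases of Proposition \ref{classification_possible_h} the explicit description of $\mathfrak{n}_{\fg}(\CC x)$ in Section \ref{sec_connected_component} lies inside $\fh\oplus\bigoplus_{\alpha\in R_0}\fg_\alpha=\mathfrak m$, so the connected group $N_G^0([x])$ sits inside $M=Z_G(T^\perp)$ and kills $\fk$. Thus $\bar\phi\colon\pi_x\to W^\perp$ is defined. For injectivity I would identify the kernel: if $g$ acts trivially on $\fk$ then $g$ centralises $T^\perp$, hence $g\in Z_G(T^\perp)=M$ (centralizers of tori in connected groups are connected); combining $g\in M$ (so $\Ad(g)x_s=x_s$, forcing $\lambda=1$ since $x_s\neq 0$ for $H_x$ smooth) with $\Ad(g)x_n=\lambda x_n$ gives $\Ad(g)x_n=x_n$. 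Therefore $\ker\bar\phi_0=Z_M(x_n)$, and since $Z_M(x_n)^0=Z_G(x)^0\subset N_G^0([x])\subset Z_M(x_n)$, injectivity of $\bar\phi$ is equivalent to the connectedness of the nilpotent centralizer $Z_M(x_n)$.

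The main obstacle is exactly this last point. When $x_n=0$ (cases (1),(2),(4)) we have $Z_M(x_n)=M$ and there is nothing to prove. When $x_n\neq0$ (cases (3),(5),(6)) the element $x_n$ is, up to conjugacy, the regular nilpotent of the type $A_1$ factor of $[M,M]$ (case (3)), or a root vector, respectively the regular nilpotent, of the type $A_2$ factor (cases (5),(6)); here $Z_M(x_n)$ need not be connected in an arbitrary isogeny. I would resolve this by using that $M$ is a \emph{Levi} subgroup, writing $M=T^\perp\!\cdot M'$ as an almost direct product, so that $Z_M(x_n)=T^\perp\!\cdot Z_{M'}(x_n)$ and the component group of $Z_M(x_n)$ is the cokernel of $T^\perp\cap M'\to \pi_0\big(Z_{M'}(x_n)\big)$. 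The verification then reduces to checking, in these few explicit cases and for the adjoint group $G$, that the central torus $T^\perp$ meets the centre of the $A_1$- or $A_2$-factor in enough finite subgroup to absorb $\pi_0\big(Z_{M'}(x_n)\big)$; this is the routine but genuinely case-dependent computation, and once it is done we obtain $\ker\bar\phi=1$ and hence the desired embedding $\pi_x\hookrightarrow W^\perp=\Aut(R^\perp)$.
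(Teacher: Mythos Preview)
Your argument is correct and runs parallel to the paper's, but is organised more uniformly. Both construct the map to $\Aut(R^\perp)$ via the action on $\fk$; the paper does this by first exhibiting a containment $N_G([x_s])\subset W^\perp\ltimes(T^\perp\times S(\fj))$ (treating cases (4)--(6) separately via Lemma~\ref{lemouterF4}), whereas you go directly through $\Ad(g)|_{\fk}$, which makes the argument case-free up to the last step. The paper reduces injectivity to the connectedness of the \emph{line stabilizer} $\Stab_{S(\fj)}([x_n])$, while you reduce it to the connectedness of the \emph{centralizer} $Z_M(x_n)$; your version is slightly sharper to verify because components of $Z_{M'}(x_n)$ coming from $Z(M')$ have to be absorbed into $T^\perp$. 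That absorption does go through in every case: for $B_n$ the derived factor is $SO_3\cong\PGL_2$ so $Z_{M'}(x_n)$ is already connected; for the short $A_1$ in $G_2$ or $F_4$ one has $M'\cong\SL_2$ but $\alpha(\alpha^\vee(-1))=(-1)^2=1$, so $-1\in T^\perp$; for case (5) the centralizer $Z_{\SL_3}(E_{13})$ is connected; and for case (6) the component group is $Z(\SL_3)=\mu_3$, which lies in $T^\perp$ since $\alpha_3,\alpha_4$ vanish on the fundamental coweight generating it. So the ``routine but case-dependent computation'' you allude to is genuinely short, and your proof is complete once it is recorded.

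One small comment: you write the target as $W^\perp=\Aut(R^\perp)$, following the paper. Strictly speaking these differ when $R^\perp=A_2$ (cases (4)--(6)), where $\Aut(R^\perp)=W^\perp\rtimes\ZZ_2$; the extra $\ZZ_2$ is exactly the $B_x$ of Proposition~\ref{propgroupB}, realised inside $W(F_4)$ by the element $\sigma$ of Lemma~\ref{lemouterF4}. Your formulation, targeting $\Aut(R^\perp)$, is the accurate one.
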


\begin{proof}
Let us deal first with $\pi_x$ when we are not in cases (4-5-6) of Proposition \ref{classification_possible_h}. 
By the unicity of the Jordan decomposition, any $g\in N_G([x])$ stabilizes $[x_s]$ and (when $x_n\ne 0$) $[x_n]$. 
Let us show that $N_G([x_s])\subset W^\perp \ltimes (T^\perp \times S(\fj))$. 

The analysis of its Lie algebra made in Section \ref{sec_connected_component} implies  that $N_G^0([x_s])= T^\perp \times S(\fj)$.
We can therefore identify $\fk$ with the subspace of elements in $\mathfrak{g}$ on which $N_G^0([x_s])$ acts trivially. 
Since $N_G^0([x_s])$ is a normal subgroup of $N_G([x_s])$, this implies that $\fk$, and therefore $T^\perp$ as well,
are stabilized by $N_G([x_s])$. Similarly, we can identify $\fk\oplus \fsl(\fj)$ with the subspace of elements in $\mathfrak{g}$ on which $T^\perp$ acts trivially. So again $N_G([x_s])$ preserves $\fk\oplus \fsl(\fj)$, and since 
it preserves $\fk$ it also preserves its orthogonal complement with respect to the Killing form, namely $\fsl(\fj)$.
But this is $0$ or $\fsl_2$, so modulo the action of $S(\fj)$ we can suppose that the action of $N_G([x_s])$ on 
$\fsl(\fj)$ is trivial. Then it stabilizes $\mathfrak{h}$, so embeds inside  $W\ltimes T$.
Since it also fixes $\fj$, the image is contained in $W^\perp\ltimes T^\perp$, so  
\[
N_G([x_s])\subset W^\perp \ltimes (T^\perp \times S(\fj)).
\]
To conclude, notice that for any $x_n\ne 0$ in Proposition \ref{classification_possible_h} we have 
\[
\Stab_{S(\fj)}([x_n])=\Stab^0_{S(\fj)}([x_n]), 
\]
from which we deduce that 
\[
N_G([x])\subset W^\perp \ltimes (T^\perp\times \Stab^0_{S(\fj)}([x_n])).
\]
Killing the connected component then yields the result. 

Now suppose that $x_s$ is as in cases (4-5-6) of Proposition \ref{classification_possible_h}, so that $R$ is of type $F_4$, 
$\dim(\fj)=2$ and $R\cap \fj$ is of type $A_2$. In order to follow the same arguments as before we just need to ensure
that any automorphism of  $\fsl(\fj)$ can be lifted to $N_G^0([x_s])$. This is clear for the inner automorphisms, and the 
outer automorphism is taken care by the next Lemma, which therefore concludes this proof. 
\end{proof}

\smallskip
Let $W$ be the Weyl group of $F_4$ and $R$ its root system. If $\alpha_3,\alpha_4\in R$ are the simple short roots and $\alpha_1,\alpha_2\in R$ the simple long roots, then $\langle \alpha_3,\alpha_4 \rangle^\perp=\langle \alpha_1,\beta\rangle$, where $\beta=\alpha_1+3\alpha_2+4\alpha_3+2\alpha_4$. 
Both root subsystems $R\cap \langle \alpha_3,\alpha_4 \rangle$ and $R\cap \langle \alpha_3,\alpha_4 \rangle^\perp$ are of type $A_2$.

\begin{lemma}
\label{lemouterF4}
Modulo the  Weyl groups of $R\cap \langle \alpha_3,\alpha_4 \rangle$ and $R\cap \langle \alpha_3,\alpha_4 \rangle^\perp$, there exists a unique $\sigma\in W$ acting as an outer isomorphism on both root subsystems.
Moreover no element of $W$ can act as an outer automorphism on one of the two root subsystems, and as an inner automorphism on the other.
\end{lemma}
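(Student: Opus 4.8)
The plan is to exploit the orthogonal decomposition $\fh = P_s \oplus P_\ell$, where $P_s := \langle \alpha_3,\alpha_4\rangle$ is spanned by the short $A_2$-system $R_s := R\cap P_s$ and $P_\ell := \langle \alpha_3,\alpha_4\rangle^\perp = \langle \alpha_1,\beta\rangle$ is spanned by the long $A_2$-system $R_\ell := R\cap P_\ell$. Any $\sigma\in W$ stabilizing both $R_s$ and $R_\ell$ preserves both $P_s$ and $P_\ell$, hence, since $P_s\oplus P_\ell=\fh$, is determined by the pair of restrictions $(\sigma|_{P_s},\sigma|_{P_\ell})$; this gives an injection of the stabilizer $N:=\{\sigma\in W : \sigma(R_s)=R_s,\ \sigma(R_\ell)=R_\ell\}$ into $\Aut(R_s)\times\Aut(R_\ell)$. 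Since $W$ preserves root lengths it cannot exchange the short system $R_s$ with the long system $R_\ell$, so $N$ is exactly the group of elements inducing automorphisms of each subsystem. I would recall that for a root system of type $A_2$ one has $\Aut(A_2)=W(A_2)\times\langle -\mathrm{id}\rangle$, the element $-\mathrm{id}$ of the plane representing the nontrivial class in $\Out(A_2)\cong\ZZ_2$ (indeed $-\mathrm{id}$ is the rotation by $\pi$, which does not belong to $W(A_2)$).

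First I would produce one element outer on both systems. Since $F_4$ is one of the types for which $-\mathrm{id}$ lies in the Weyl group, the longest element $w_0=-\mathrm{id}\in W$ restricts to $-\mathrm{id}$ on each plane, hence induces the outer class on both $R_s$ and $R_\ell$. On the other hand, the reflections $s_\gamma$ with $\gamma\in R_s$ fix $P_\ell$ pointwise (as $\gamma\perp P_\ell$) and generate a copy of $W(R_s)$ inside $N$ realizing all inner automorphisms of $R_s$ while acting trivially on $R_\ell$; symmetrically for $W(R_\ell)$. Thus $W(R_s)\times W(R_\ell)\subset N$, and the quotient $N/\big(W(R_s)\times W(R_\ell)\big)$ injects into $\Out(R_s)\times\Out(R_\ell)\cong\ZZ_2\times\ZZ_2$, with the diagonal class $(1,1)$ realized by $w_0$.

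The heart of the argument, and the step I expect to be the main obstacle, is to rule out the two mixed classes, i.e. to show that no $\sigma\in W$ is outer on one subsystem and inner on the other. For this I would invoke Steinberg's theorem on pointwise stabilizers: the subgroup of $W$ fixing $P_\ell$ pointwise is generated by the reflections $s_\gamma$ whose mirror $\gamma^\perp$ contains $P_\ell$, that is by those $\gamma\in R$ with $\gamma\perp P_\ell$, namely $\gamma\in R\cap P_s=R_s$. Hence this pointwise stabilizer equals $W(R_s)$, and it acts on $P_s$ only through $W(R_s)$, i.e. by inner automorphisms; in particular it never realizes $-\mathrm{id}_{P_s}$. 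Consequently there is no $\sigma\in W$ acting trivially on $P_\ell$ and as an outer automorphism on $R_s$; modifying any hypothetical $\sigma$ that is inner on $R_\ell$ by a suitable element of $W(R_\ell)\subset N$ (which fixes $P_s$) reduces to this impossible case, so no element is inner on $R_\ell$ and outer on $R_s$. The symmetric application of Steinberg's theorem to the pointwise stabilizer of $P_s$, which equals $W(R_\ell)$ and acts innerly on $P_\ell$, rules out the other mixed class.

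Putting this together, the image of $N$ in $\Out(R_s)\times\Out(R_\ell)$ is precisely the diagonal $\{(0,0),(1,1)\}$. The second assertion of the lemma is exactly the emptiness of the mixed classes just established, while the first follows because the elements outer on both systems form the single nontrivial fibre over $(1,1)$, i.e. a unique coset $w_0\cdot\big(W(R_s)\times W(R_\ell)\big)$; this is the stated uniqueness modulo the two $A_2$ Weyl groups.
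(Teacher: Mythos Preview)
Your argument is correct and takes a genuinely different route from the paper's proof. The paper proceeds computationally: it reduces to checking whether a candidate $\sigma$ fixing $P_s=\langle\alpha_3,\alpha_4\rangle$ pointwise and swapping $\alpha_1\leftrightarrow\beta$ on $P_\ell$ can lie in $W$, observes that $\sigma(\alpha_2)$ would then not be a root, and symmetrically; for existence it exhibits the explicit product $\sigma=s_{\alpha_2}s_{\alpha_3}s_{\alpha_2}s_{\alpha_4}s_{\alpha_3}s_{\alpha_2}$ realizing the diagram swaps $\alpha_3\leftrightarrow\alpha_4$ and $\alpha_1\leftrightarrow\beta$, with uniqueness following since these four vectors span $\fh$.

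Your approach is more structural: you use $w_0=-\mathrm{id}\in W(F_4)$ to hit the diagonal outer class, and Steinberg's theorem on pointwise stabilizers to exclude the mixed classes. This is cleaner and would transfer to any analogous pair of orthogonal root subsystems once one knows the relevant pointwise stabilizers are parabolic. The paper's computation, by contrast, yields a concrete representative swapping the simple roots of each $A_2$, which is convenient downstream (in Proposition~\ref{propgroupB}) where one wants a lift acting on $\fsl(\fj)$ as the Cartan involution fixing the chosen $x_n$; your representative $w_0$ lies in the same coset but acts as $-\mathrm{id}$ on $P_s$, so one would need to adjust by an element of $W(R_s)$ to recover that specific form.
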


\begin{proof}
If $\sigma$ does not act as an outer automorphism on $\langle \alpha_3,\alpha_4 \rangle$, we can assume that it acts on this subspace as the identity by composing with an element of the $A_2$ Weyl group of $R\cap \langle \alpha_3,\alpha_4 \rangle$. Moreover an outer automorphism of $\langle \alpha_3,\alpha_4 \rangle^\perp$ modulo the $A_2$ Weyl group of $R\cap \langle \alpha_3,\alpha_4 \rangle^\perp$ acts by exchanging $\alpha$ and $\beta$. One can check that $\sigma(\alpha_2)$ is not a root, thus such a $\sigma$ cannot exist.
An analogous argument applies if $\sigma$ does not act as an outer automorphism on $\langle \alpha_3,\alpha_4 \rangle^\perp$.

If $\sigma$ acts as an outer automorphism both on $\langle \alpha_3,\alpha_4 \rangle$ and $\langle \alpha_3,\alpha_4 \rangle^\perp$, modulo the action of the two Weyl groups stabilizing the two subspaces, $\sigma$ exchanges $\alpha_3$ with $\alpha_4$ and $\alpha_1$ with $\beta$. One can check that $\sigma:=s_{\alpha_2}\circ s_{\alpha_3}\circ s_{\alpha_2}\circ s_{\alpha_4}\circ s_{\alpha_3}\circ s_{\alpha_2}$ does the trick.
Moreover, since $\langle \alpha_1,\alpha_3,\alpha_4,\beta\rangle=\fh$, the condition that $\sigma$ must exchange $\alpha_3$ with $\alpha_4$ and $\alpha_1$ with $\beta$ (modulo the action of the Weyl groups) defines it uniquely as a linear automorphism of $\fh$, hence in $W$.
\end{proof}

Now we can deduce the statement we are aiming at.

\begin{prop}
\label{propgroupB}
The finite group $\pi_x=N_G([x])/N_G^0([x])$ is given by 
$$\pi_x\simeq \Stab_{W^\perp}([x_s])\rtimes B_x,$$
where $B_x=\ZZ_2$ in cases (4-5-6) of Proposition \ref{classification_possible_h} if $\alpha_1(x)=\alpha_2(x)$ for $\alpha_1,\alpha_2 \in R^\perp$ which are $W^\perp$-conjugate to the simple roots of $R^\perp=A_2$; and 
$B_x=1$ otherwise.
\end{prop}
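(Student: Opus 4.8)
The plan is to exploit the embedding $\pi_x\hookrightarrow\Aut(R^\perp)$ established in the Lemma just above, together with the canonical split exact sequence $1\to W^\perp\to\Aut(R^\perp)\to\Out(R^\perp)\to 1$. Writing $B_x$ for the image of $\pi_x$ in $\Out(R^\perp)$ and $\pi_x^{\circ}:=\pi_x\cap W^\perp$ for the kernel, this restricts to $1\to\pi_x^{\circ}\to\pi_x\to B_x\to 1$, and the whole task reduces to three points: identify $\pi_x^{\circ}$ with $\Stab_{W^\perp}([x_s])$, compute $B_x$, and exhibit a splitting.

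First I would prove $\pi_x^{\circ}=\Stab_{W^\perp}([x_s])$. For the inclusion $\subseteq$, an element of $\pi_x^{\circ}$ lifts to some $g\in N_G([x])$ acting on $\fk$ through a Weyl element $w\in W^\perp$ (recall that $T^\perp$ and $S(\fj)$ act trivially on $\fk$); since $g$ stabilizes $[x]$ it stabilizes $[x_s]$, and as $x_s\in\fk$ this forces $w\in\Stab_{W^\perp}([x_s])$. For $\supseteq$, given $w\in\Stab_{W^\perp}([x_s])$ I would lift it to $N_G(T)$: being a product of reflections in roots of $R^\perp\subset\fk$, it fixes $\fj$ pointwise, hence preserves the line $[x_n]$ and merely rescales $x_n$; composing with a suitable element of the connected group $\Stab^0_{S(\fj)}([x_n])$, which scales the root vector $x_n$ through any character via a maximal torus of $S(\fj)$, matches the scalar obtained on $x_s$ and yields an honest $g\in N_G([x])$ mapping to $w$ (when $x_n=0$ this adjustment is vacuous). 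This gives $\pi_x^{\circ}\cong\Stab_{W^\perp}([x_s])$, which is cyclic by Springer's results since $x_s$ is regular for $R^\perp$.

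Next I would compute $B_x$. Whenever $R^\perp$ has type $A_1$, $B_3$ or $B_{n-1}$ its automorphism group coincides with $W^\perp$, so $B_x=1$; and in the regular semisimple case $\fj=0$, so the embedding lands already in $W^\perp=W$ and again $B_x=1$ (in the simply laced subcase the diagram symmetries of $R=R^\perp$ are not realized inside the connected group $G$, and reappear only later through the factor $D_x$). The one substantial situation is cases (4--6) of Proposition \ref{classification_possible_h}, where $R=F_4$ and $R^\perp=A_2$, so $\Out(R^\perp)=\ZZ_2$. By Lemma \ref{lemouterF4} the unique coset of $W^\perp$ acting on $R^\perp$ by an outer automorphism is represented by an element $\sigma\in W(F_4)$ exchanging simultaneously the short roots $\alpha_3\leftrightarrow\alpha_4$ and the simple roots of $R^\perp$. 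Because $x_n$ is built symmetrically inside the short $A_2$ (it is $0$, $X_{\alpha_3+\alpha_4}$, or $X_{\alpha_3}+X_{\alpha_4}$), $\sigma$ preserves $[x_n]$ in every case, so $\sigma$ enters $\pi_x$ exactly when it also fixes $[x_s]$. Writing $\alpha_1,\alpha_2$ for the simple roots of $R^\perp$, the element $\sigma$ acts on $\fk$ as the diagram flip swapping them, whose $(+1)$-eigenline is $\{\alpha_1=\alpha_2\}$ and whose $(-1)$-eigenline lies on the wall $\{\alpha_1+\alpha_2=0\}$. Since $x_s$ is regular for $R^\perp$ it cannot lie on that wall, so $\sigma([x_s])=[x_s]$ forces $\sigma(x_s)=x_s$, which happens precisely when $\alpha_1(x_s)=\alpha_2(x_s)$, i.e. $\alpha_1(x)=\alpha_2(x)$ (the nilpotent part is Killing-orthogonal to $\fk$). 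Hence $B_x=\ZZ_2$ under the stated condition and $B_x=1$ otherwise.

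Finally I would check the splitting: the diagram involution of $A_2$ fixing the base $\{\alpha_1,\alpha_2\}$ is a complement to $W^\perp$ in $\Aut(R^\perp)$, and when $\alpha_1(x)=\alpha_2(x)$ it fixes $x_s$; after the same scalar adjustment on $x_n$ it lifts to an order-two element of $\pi_x$ meeting $\pi_x^{\circ}$ trivially, producing $\pi_x\cong\Stab_{W^\perp}([x_s])\rtimes B_x$ with $B_x$ acting by conjugation on the cyclic group $\Stab_{W^\perp}([x_s])$. I expect the main obstacle to be exactly the analysis in cases (4--6): verifying that the single Weyl element $\sigma$ supplied by Lemma \ref{lemouterF4} genuinely stabilizes $[x_n]$, and pinning down the eigenspace dichotomy for $x_s$ — in particular the observation that the $(-1)$-eigendirection of the diagram flip is non-regular, which is what collapses the criterion for $B_x=\ZZ_2$ to the clean equality $\alpha_1(x)=\alpha_2(x)$.
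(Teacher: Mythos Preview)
Your overall strategy coincides with the paper's: both use the embedding $\pi_x\hookrightarrow\Aut(R^\perp)$ from the preceding Lemma, identify the intersection with $W^\perp$ as $\Stab_{W^\perp}([x_s])$ via a lift-and-rescale argument on $x_n$, and then invoke Lemma~\ref{lemouterF4} together with the $\alpha_3\leftrightarrow\alpha_4$ symmetry of $x_n$ to handle the outer contribution in cases (4--6). Your write-up is more structured (the short exact sequence, the explicit splitting), but the substance is the same.

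There is, however, a genuine gap in your determination of $B_x$. You check only whether the particular representative $\sigma$ (whose restriction to $\fk$ is the diagram swap $\tau$) lies in $\pi_x$, and from ``$\tau$ does not fix $[x_s]$'' you conclude $B_x=1$. But $B_x$ is the image of $\pi_x$ in $\Out(R^\perp)$, so $B_x=\ZZ_2$ as soon as \emph{some} element of the outer coset $W^\perp\tau$ lies in $\pi_x$, not only $\tau$ itself. Your eigenline argument correctly shows that the $(-1)$-eigenline of $\tau$ is the root wall $\{\alpha_1+\alpha_2=0\}$, hence excluded by regularity; but it says nothing about the five other outer elements. In particular $w_0^\perp\tau$ acts on $\fk$ as $-\id$, which stabilizes $[x_s]$ for every regular $x_s$; and since $\sigma w_0^\perp$ still acts on $\fj$ as the swap $\alpha_3\leftrightarrow\alpha_4$, the very same rescaling you use elsewhere produces a lift in $N_G([x])$ in each of cases (4), (5), (6). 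So the implication ``condition fails $\Rightarrow B_x=1$'' is not established by your argument as written. The paper's own proof is equally brief here: it exhibits the extra $\ZZ_2$ when a conjugate of $\sigma$ exchanges two simple roots of $R^\perp$ at which $x_s$ takes equal values, and does not argue the converse direction either.
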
 

\begin{proof}
When $x_n$ is nonzero the group $\Gamma$ is trivial. In cases (2-3)  of Proposition \ref{classification_possible_h} we can identify $\pi_x$ with $\Stab_{W^\perp}([x_s])$ because $(W^\perp)(x_n)=x_n$ and we can always rescale $x_n$ by  
an element of $G$ fixing $x_s$ in order to fix $x$.
t
In cases  (4-5-6) of the Proposition, the nilpotent element $x_n$ can be chosen among $0,X_{\alpha_3+\alpha_4}$ or $X_{\alpha_3}+X_{\alpha_4}$, so that it is fixed by the Cartan involution 
exchanging $\alpha_3$ and $\alpha_4$. Moreover a conjugate of the element $\sigma$ appearing in Lemma \ref{lemouterF4} acts by exchanging $\alpha_1$ and $\alpha_2$ on $\fk$ and fixes $x_n\in \fsl(\fj)$. By rescaling $x_n$ as before, we deduce that a conjugate of $\sigma$ fixes $[x]$ and 
thus descends to an automorphism of $H_x$, giving the extra $\ZZ_2$. 
\end{proof}

As we already mentioned we can then describe $\Stab_{W^\perp}([x_s])$ with the help of Springer's result 
summarized in the previous subsection. 

\medskip\noindent {\it Example}. Let us describe $\pi_x$ when $\fg$ is of type $F_4$. If $x$ is regular then $\pi_x$ is either $\ZZ_2,\ZZ_6,\ZZ_8$ or $\ZZ_{12}$ (see Table \ref{tab:orderofstabilizers}); if $x$ is subregular then $R^\perp$ is of type $B_3$ and $\pi_x$ is either $\ZZ_2$ or $\ZZ_6$ (see Table \ref{tab:orderofstabilizers}); if $x$ is as in case (4-5-6) of Proposition \ref{classification_possible_h} then $R^\perp$ is of type $A_2$ and $\pi_x$ is either $\ZZ_2$ or $\ZZ_3$ (if $B_x=1$, see Table \ref{tab:orderofstabilizers}), $\ZZ_2\rtimes \ZZ_2$ or $\ZZ_3\rtimes \ZZ_2$ (in the hypothesis of case $i)$ of the previous proposition, i.e. when $B_x=\ZZ_2$).

\subsection{The action of outer automorphisms}

In order to completely describe $\Aut(H_x)$, there remains to understand the automorphisms coming from $\Gamma$. If the 
latter is non trivial, we are either in type $A_n$ ($n\geq 2$), $D_n$ ($n\geq 4$) or $E_6$, and  $\Gamma=\ZZ_2$
except in type $D_4$ for which  $\Gamma\simeq\fS_3$. Recall moreover that since we are in the simply laced case, $x$ must be 
regular semisimple. 

Denote by $w_0$ the longest element of $W$. Since $w_0\circ \Gamma\circ w_0=\Gamma$, if $\Gamma\cong \ZZ_2$ then either $\Gamma\circ w_0=\pm\id|_{\fh}$ or $w_0=-\id|_{\fh}$, the first possibility occurring exactly in type $D_{2m+1}$ and $E_6$ (see \cite{bourbaki}, tables at the end of Chapter 6).

Let $G$ be of type $D_{2m}$, $m\geq 3$. The root system orthogonal to any root $\alpha$ is of type $D_{2m-2}\times A_1$. Indeed, any root can be sent to the $n$-th root of the Dynkin diagram of $D_{2m}$ by an element $w\in W$. The group $w^{-1}\circ \Gamma \circ w$ acts on the set of roots by exchanging $\alpha$ with one of the two roots of the orthogonal root system $A_1$. Any conjugate of $\Gamma$ acts in this way.

If $G$ is of type $D_4$, the root system orthogonal to any root is of type $A_1\times A_1\times A_1$ and a certain conjugate of $\Gamma$ acts by exchanging the three $A_1$ factors.

Let us denote by $\tilde{\pi}_x:=\Aut(H_x)/\Aut^0(H_x)$. At this point, we know that, when $\Gamma$ is not trivial, $\tilde{\pi}_x$ is an extension by $\Stab_W([x])$ of the group $C_x$ of elements of $\Gamma$ that descend to $H_x$.

\begin{prop}
\label{lemgammadescends}
The group $C_x$ is non trivial exactly in the following cases:
\begin{itemize}
\item $G$ is of type $D_{2m+1}$ ($m\geq 2$) or $E_6$, and then $C_x\simeq \ZZ_2$.
\item $G$ is of type $A_n$ ($n\geq 2$) and  $(x,\alpha_i)=(x,\alpha_{n-i+1})$ for some set $\{\alpha_1,\ldots , \alpha_n\}$ of roots which is $W$-conjugate to the set of simple roots; then $C_x\simeq \ZZ_2$.
\item $G$ is of type $D_{2m}$ ($m\geq 3$), and $(x,\alpha)=(x,\alpha')$ for  two roots $\alpha, \alpha'$ such that  $\langle\alpha,\alpha'\rangle^\perp$ is of type $D_{2m-2}$; then $C_x\simeq \ZZ_2$.
\item $G$ is of type $D_4$, and $(x,\alpha)=(x,\alpha')=(x,\alpha'')$ for three mutually orthogonal roots $\alpha,\alpha',\alpha''$; then $C_x\simeq \fS_3$. 
\item $G$ is of type $D_4$, the previous condition is not satisfied but there exist two orthogonal roots 
$\alpha,\alpha'$ such that $(x,\alpha)=(x,\alpha')$; then $C_x\simeq \ZZ_2$.
\end{itemize}
\end{prop}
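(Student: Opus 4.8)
The plan is to realize $C_x$ as the image of the restriction homomorphism $\Aut(H_x)\to\Gamma$ and to decide, for each nontrivial $\gamma\in\Gamma$, whether it lies in this image. Since $\Aut(H_x)=\Stab_{\Aut(\fg)}([x])$ by Proposition \ref{prop_containment} (in type $A$ one argues instead through the Jordan picture of Section \ref{sec_type_Jordan}), an element $\gamma$ descends precisely when some representative of the coset $G\gamma\subset\Aut(\fg)$ fixes the line $[x]$. As $x$ is regular semisimple and may be taken in $\fh$, its $G$-orbit meets $\PP(\fh)$ exactly along the $W$-orbit $W[x]$; hence $\gamma\in C_x$ if and only if $\gamma([x])\in W[x]$, equivalently if and only if there exists $w\in W$ such that $x$ is an eigenvector of $w\gamma$ acting on $\fh$. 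This reduces the whole statement to a question about eigenvectors of the elements of the coset $W\gamma$ on the Cartan subalgebra, which I would then settle type by type.

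First I would dispose of the unconditional cases. In types $D_{2m+1}$ and $E_6$ the discussion preceding the statement shows that $\gamma\circ w_0=\pm\id_\fh$, so (using $w_0\in W\subset G$) the coset $G\gamma$ contains $\pm\id_\fh$; as $\pm\id_\fh$ fixes every line of $\PP(\fh)$, the automorphism $\gamma$ descends for all regular semisimple $x$ and $C_x\simeq\ZZ_2$. The same observation applies at the level of the Cartan in type $A_n$, which is exactly why the genuine constraint there cannot be seen from $\fh$ alone and must be extracted from the finer Jordan-algebraic description: the outer involution is the duality preserving the pencil $L_x$, and its action on the $n+1$ eigenvalue–points of $L_x\cap\cX^\vee$ produces a symmetry of $H_x$ that is recorded in $C_x$ exactly when a $W$-conjugate of the diagram flip fixes $x$, i.e. when there is a base $\{\alpha_1,\dots,\alpha_n\}$ with $(x,\alpha_i)=(x,\alpha_{n-i+1})$; this palindrome condition on the eigenvalues yields $C_x\simeq\ZZ_2$.

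The heart of the argument is the type $D$ analysis, where $w_0=-\id_\fh$, so $\gamma$ is an \emph{independent} diagram symmetry and $W\gamma$ no longer contains $\pm\id_\fh$. Using the description recalled just before the statement, a suitable conjugate $w\gamma w^{-1}$ acts as the exchange of a root $\alpha$ with one of the two roots $\alpha'$ of the orthogonal $A_1$, with $\langle\alpha,\alpha'\rangle^\perp$ of type $D_{2m-2}$. This exchange is an involution fixing a hyperplane, whose $(-1)$-eigenspace is the single line $\CC(\alpha-\alpha')$; since $x$ is regular it cannot lie on this line, so $x$ is an eigenvector only if it lies in the complementary $(+1)$-eigenspace, which is precisely the condition $(x,\alpha)=(x,\alpha')$. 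Conversely this equality makes $w\gamma w^{-1}$ fix $[x]$, so $C_x\simeq\ZZ_2$ exactly under the stated condition. For $D_4$ the orthogonal system is $A_1\times A_1\times A_1$ and $\Gamma\simeq\fS_3$ permutes the three mutually orthogonal roots $\alpha,\alpha',\alpha''$; running the eigenvector analysis for each element of $\fS_3$ shows that a transposition descends iff the two corresponding pairings coincide, while an order-three element forces $x$ into the fixed $2$-plane and hence requires $(x,\alpha)=(x,\alpha')=(x,\alpha'')$, so that the full $\fS_3$ descends iff all three pairings coincide, giving the last two bullet points.

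I expect the principal obstacle to be the regularity bookkeeping over $\CC$, especially in the $D_4$ case. Because $\fh$ is a complex space, the conjugated order-three elements of $W\gamma$ have eigenvalues $\omega,\omega^2$ in addition to $\pm 1$, and one must rule out that a \emph{regular} $x$ can be such a complex eigenvector, which would spuriously produce a $\ZZ_3$ contribution not listed in the statement; this is a twisted analogue of Springer's theory of regular elements applied to the coset $W\gamma$ rather than to $W$ itself, and it is what pins down that order-three symmetries descend only in the all-equal situation. A secondary delicate point is to check that the orthogonal complement of the exchanged $A_1$ is genuinely of type $D_{2m-2}$ (so that the correct conjugacy class of $\gamma$ is being used), and to make the type $A$ line fully consistent with the computation of $\Aut(H_x)$ in Section \ref{sec_type_Jordan}, ensuring that the class recorded by $C_x$ is the extra duality symmetry and not one already accounted for by the torus or by $\Stab_W([x])$.
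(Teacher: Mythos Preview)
Your approach coincides with the paper's: both argue that $\gamma\in\Gamma$ descends to $H_x$ precisely when some element of the coset $W\gamma$ fixes $[x]$ on $\fh$. For $D_{2m+1}$ and $E_6$ you invoke $\gamma\circ w_0=-\mathrm{id}_\fh$, exactly as the paper does, and for $D_{2m}$ and $D_4$ you exhibit a conjugate $w^{-1}\gamma w$ fixing $x$ under the stated root--pairing conditions, which is again what the paper sketches (in one sentence). The paper's proof, like yours, really only establishes the \emph{sufficiency} of those conditions; your final paragraph correctly identifies that the necessity direction requires controlling all elements of the coset $W\gamma$, not merely the $W$--conjugates of $\gamma$. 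In type $D_{2m}$ this is most cleanly handled through the identification $W(D_{2m})\rtimes\Gamma\simeq W(B_{2m})$, after which the question becomes whether a $D_{2m}$--regular $x$ can be an eigenvector of an element with an odd number of sign changes; your remark about twisted Springer theory points in the right direction.

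The one place where your argument diverges is type $A_n$. You correctly observe that $\gamma\circ w_0=-\mathrm{id}_\fh$ holds there as well, which on the face of it forces $C_x\simeq\ZZ_2$ unconditionally, and you attempt to recover the stated palindrome condition via the Jordan picture of Section~\ref{sec_type_Jordan}. That detour does not resolve the issue: the Jordan description concerns automorphisms of $H_x$ that may fail to lift to $\Aut(\fg)$, whereas $\gamma w_0\in\Aut(\fg)$ already fixes $[x]$ for every regular semisimple $x$. The paper simply says $A_n$ is handled ``similarly'' to $D_{2m}$, i.e.\ by producing a conjugate of $\gamma$ with eigenvalue $+1$ on $x$, and does not comment on the $-\mathrm{id}$ contribution; bear in mind that Section~\ref{sec_adjoint} explicitly excludes type $A$ from its general framework, so this bullet should be read with the caveats about type $A$ made elsewhere in the paper.
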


\begin{proof}
The first statement follows by noticing that $[x]$ is fixed by $\Gamma\circ w_0$ (which sends $x$ to $-x$). If $G$ is of type $D_{2m}$ with $m\geq 3$,  suppose that $w\in W$ sends $\alpha$ to the $n$-th simple root, and $\alpha'$ to the $(n-1)$-th simple root of the root system. Then $w^{-1}\circ \Gamma \circ w$ fixes $x$ and thus $C_x\simeq \ZZ_2$. The cases of $D_4$ and $A_n$ can be treated similarly. 
\end{proof}

\medskip\noindent {\it Example}. Let us describe completely $\Aut(H_x)$ when $\fg$ is of type $E_6$. Since $E_6$ is simply connected, $x$ is regular and $\Aut^0(H_x)=T$. Then, depending on the stabilizer of $[x]$ inside $W$, $$\Aut(H_x)\cong (\Aut^0(H_x)\rtimes \Stab_W([x]))\rtimes \Gamma\cong(T\rtimes \ZZ_d)\rtimes \ZZ_2$$ for $d=2,4,6,8,9,12$ (see Table \ref{tab:orderofstabilizers}). 

\subsection{Conclusion}
Let us synthetize the results we have obtained. Using the notations of the previous sections, the automorphism group of a smooth hyperplane section $H_x$ of an adjoint variety  can be decomposed as
 $$\Aut(H_x)=N_G([x])\rtimes C_x,$$
 where $C_x=\Stab_\Gamma([x])\subset \Gamma$ (a subgroup of the outer automorphism group of the root system $R$) 
 is described in Proposition \ref{lemgammadescends}. We showed in Section \ref{sec_connected_component} that $\Aut^0(H_x)=N_G^0([x])$ and we described this group explicitly. By Proposition \ref{propgroupB}, the quotient $\pi_x=N_G([x])/N_G^0([x])$
 can be decomposed as $$\pi_x=\Stab_{W^\perp}([x_s])\rtimes B_x,$$ 
 where $W^\perp\subset W$ is described in Section \ref{sec_generalcase}.
 In the same Proposition $B_x$ is made explicit, while $\Stab_{W^\perp}([x_s])$ is a cyclic group completely described in Corollary \ref{corocyclicgroups}.

Notice on the one hand that if $\Gamma$  is non trivial, then $x$ is regular semisimple and $W=W^\perp$; on the other hand $B_x$ is non trivial only if $W^\perp\subsetneq W$, and it is essentially the group of outer automorphisms of $R^\perp$ that are contained in $G$ and stabilize $[x_s]$. Thus $B_x$ and $C_x$ play similar roles and cannot be both non trivial;  we 
denoted their product by $D_x$ in Theorem \ref{thm_intro_adjoint}. Finally, $Aut(H_x)$ can be decomposed as
\[
\Aut(H_x)=\Aut^0(H_x)\rtimes \Stab_{W^\perp}([x_s]) \rtimes D_x.
\]
In Table \ref{final_table} we reported all the possibilities for each term of this decomposition, with the following notation:
\begin{itemize}
\item $T$ is a maximal torus in $G$, $T_{V^\perp}$ is the torus whose Lie algebra is $V^\perp \subset \fh$ for a subspace $V\subset \fh$ and $\fh$ a Cartan subalgebra of $\fg$;
\item $\GG_a$ is an additive one dimensional group, $U_2$ (respectively $U_3$) is a two (resp. three) dimensional unipotent group, $A_i$ is a Lie group of type $A_i$.
\end{itemize} 
The second column follows the notation of Proposition \ref{classification_possible_h}. In type $A$ we reported only the subgroup of $\Aut(H_x)$ 
that extends to $\Aut(X_{\ad}(\fsl_{n+1}))$, since there could be some unnatural automorphisms (see Theorem \ref{thm_exact_sequence_aut_jordan} and the discussion that follows).

\begin{table}[]
\centering
\begin{tabular}{c|c|c|c|c}
Type & $x$ & $\Aut^0(H_x)$ & $\Stab_{W^\perp}([x_s])\cong \ZZ_d$ & $D_x$ \\
\hline
 $A_n$ & (1) & $T$ & $d\geq 2 \mbox{ , }d\mid n \mbox{ or }d\mid n+1$ & $1$ or $\ZZ_2$ \\
 $B_n$ & (1) & $T$ & $d\mid 2n$, $d$ even & $1$ \\
 $B_n$ & (2) & $T_{\alpha_n^\perp}\rtimes A_2$ & $d\mid 2n-2$, $d$ even & $1$ \\
 $B_n$ & (3) & $T_{\alpha_n^\perp}\rtimes \GG_a$ & $d\mid 2n-2$, $d$ even & $1$ \\
 $D_4$ & (1) & $T$ & $d=2,4$ & $1$ or $\ZZ_2$ or $\cS_3$ \\
 $D_{2m}$ & (1) & $T$ & $d\mid 2m \mbox{ or }d\mid 2m-1$, $d$ even & $1$ or $\ZZ_2$ \\  
 $D_{2m+1}$ & (1) & $T$ & $d\mid 2m+1 \mbox{ or }d\mid 2m$, $d$ even & $\ZZ_2$ \\
 $E_6$ & (1) & $T$ & $d=2,4,6,8,9,12$ & $\ZZ_2$ \\
 $E_7$ & (1) & $T$ & $d=2,6,14,18$ & $1$ \\
 $E_8$ & (1) & $T$ & $ d=2,4,6,8,10,12,15,20,24,30$ & $1$ \\
 $G_2$ & (1) & $T$ & $d=2,6$ & $1$ \\
 $G_2$ & (2) & $T_{\alpha_2^\perp}\rtimes A_2$ & $d=2$ & $1$ \\
 $G_2$ & (3) & $T_{\alpha_2^\perp}\rtimes \GG_a$ & $d=2$ & $1$ \\
 $F_4$ & (1) & $T$ & $d=2,6,8,12$ & $1$ \\
 $F_4$ & (2) & $T_{\alpha_4^\perp}\rtimes A_2$ & $d=2$ & $1$ \\
 $F_4$ & (3) & $T_{\alpha_4^\perp}\rtimes \GG_a$ & $d=2$ & $1$ \\
 $F_4$ & (4) & $T_{\langle\alpha_3,\alpha_4\rangle^\perp}\rtimes A_3$ & $d=2,3$ & $1$ or $\ZZ_2$ \\
 $F_4$ & (5) & $T_{(\alpha_3+\alpha_4) ^\perp}\rtimes U_3$ & $d=2,3$ & $1$ or $\ZZ_2$ \\
 $F_4$ & (6) & $T_{\langle\alpha_3,\alpha_4\rangle ^\perp}\rtimes U_2$ & $d=2,3$ & $1$ or $\ZZ_2$ 
 
\end{tabular}

\medskip
\caption{Automorphisms of smooth linear sections $H_x$ of adjoint varieties}
\label{final_table}

\end{table}

\bibliographystyle{alpha}

\bigskip
\noindent
\textsc{Institut de Mathématiques de Bourgogne, UMR CNRS 5584, Universit\'e de Bourgogne et Franche-Comt\'e, 9 Avenue Alain Savary, BP 47870, 21078 Dijon Cedex, France}

\noindent
\textit{Email address}: \texttt{Vladimiro.Benedetti@u-bourgogne.fr  }

\medskip
\noindent
\textsc{Institut de Mathématiques de Toulouse, UMR 5219,  Universit\'e Paul Sabatier, F-31062 Toulouse Cedex 9, France}

\noindent
\textit{Email address}: \texttt{manivel@math.cnrs.fr}

\end{document}